\newtheorem{prop}{Proposition}[section]
\newtheorem{defn}[prop]{Definition}
\newtheorem{lem}[prop]{Lemma}
\newtheorem{cor}[prop]{Corollary}
\newtheorem{thm}[prop]{Theorem}
\newtheorem{rem}[prop]{Remark}
\newtheorem{fact}[prop]{Fact}
\newtheorem{obs}[prop]{Observation}
\numberwithin{equation}{section}
\def\BZ{\mathbb{Z}}
\def\Z{\mathbb{Z}}
\def\Q{\mathbb{Q}}
\def\N{\mathbb{N}}
\def\mod{\mathop{\rm mod}\nolimits}
\DeclarePairedDelimiter{\ceil}{\lceil}{\rceil}
\DeclarePairedDelimiter{\floor}{\lfloor}{\rfloor}
\begin{document}

\title{Strongly Obtuse Rational Lattice Triangles}

\author{Anne Larsen, Chaya Norton, and Bradley Zykoski}
\address{Department of Mathematics\\
    Harvard University \\
    Cambridge, MA 02138 \\
    USA}
\email{larsen@college.harvard.edu}

\address{Department of Mathematics \\
University of Michigan \\
Ann Arbor, MI 48109 \\
USA}
\email{nchaya@umich.edu}

\address{Department of Mathematics \\
University of Michigan \\
Ann Arbor, MI 48109 \\
USA}
\email{zykoskib@umich.edu}

\subjclass[2010]{37D50 (primary), 11N25 (secondary)}
%
%\thanks{The author was partially supported by the Sloan Foundation and the NSF}

\begin{abstract}
We classify rational triangles which unfold to Veech surfaces when the largest angle is at least $\frac{3\pi}{4}$. When the largest angle is greater than $\frac{2\pi}{3}$, we show that the unfolding is not Veech except possibly if it belongs to one of six infinite families. Our methods include a criterion of Mirzakhani and Wright that built on work of M\"oller and McMullen, and in most cases show that the orbit closure of the unfolding cannot have rank 1.
\end{abstract}

\maketitle

\section{Introduction}

The question considered in this paper is motivated by the following simple problem: what can be said about the dynamical system consisting of a billiard ball bouncing around a polygonal billiard table?

One approach to this problem uses the method of unfolding described in \cite{ZK} to transform the piecewise linear billiard path on a rational polygonal table (i.e., a table whose angles are rational multiples of $\pi$) into a straight path on a translation surface known as the ``unfolding" of the polygon, where the dynamics of straight-line flow might be better understood. For example, Veech \cite{Veech} proved that any translation surface whose affine automorphism group is a lattice has ``optimal dynamics" (i.e., straight-line flow in any given direction is either completely periodic or uniquely ergodic) and that the unfolding of an obtuse isoceles triangle with angles of the form $(\frac{\pi}{n}, \frac{\pi}{n}, \frac{(n-2)\pi}{n})$ is such a surface. Similar methods were later used to determine whether other rational triangles also have this ``lattice property," and several more individual lattice triangles and families of rational triangles have been identified.

Combining Veech's characterization of lattice triangles with the fact that the orthic triangle provides a known periodic billiard trajectory in each acute triangle, Kenyon and Smillie \cite{KS} were able to formulate a number theoretic criterion for the angles of an acute rational triangle that would be satisfied by any lattice triangle. They used this criterion to classify all acute and right-angled rational lattice triangles, up to the conjecture that their computer search had identified all triples not satisfying the criterion. This number theoretic conjecture was then proved by Puchta \cite{Puchta}, completing the classification.

Less is known for obtuse triangles, as there is no obvious periodic billiard trajectory in this case and so the methods of \cite{KS} do not easily extend. After the family of isoceles triangles originally described by Veech, the family of lattice triangles with angles $(\frac{\pi}{2n}, \frac{\pi}{n}, \frac{(2n-3)\pi}{n})$ was discovered independently in \cite{Vorobets} and \cite{Ward}, and more recently, computer-assisted computations of billiard trajectories led Hooper \cite{Hooper} to identify one more obtuse lattice triangle, the $(\frac{\pi}{12}, \frac{\pi}{3}, \frac{7\pi}{12})$, and to conjecture that the list of known obtuse lattice triangles is now complete. We prove half of this conjecture:

\begin{thm}
\label{realmain}
A rational obtuse triangle with obtuse angle $\ge 135^\circ$ has the lattice property if and only if it belongs to one of the two known families $(\frac{\pi}{n}, \frac{\pi}{n}, \frac{(n-2)\pi}{n})$ and $(\frac{\pi}{2n}, \frac{\pi}{n}, \frac{(2n-3)\pi}{2n})$.
\end{thm}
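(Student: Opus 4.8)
The plan is to combine Veech's number-theoretic
classification philosophy with the rank-rigidity machinery of Mirzakhani and
Wright. First I would recall that the unfolding of a rational triangle with
angles $(\frac{p\pi}{N}, \frac{q\pi}{N}, \frac{r\pi}{N})$ (with
$p+q+r=N$ and $\gcd(p,q,r)=1$) produces a translation surface whose
$\GL_2^+(\R)$-orbit closure $\M$ carries a well-defined notion of rank, and
that the lattice property is exactly the statement that $\M$ has rank $1$ and
equals a single closed orbit. The two known families in the statement are
precisely the Veech family $(\frac{\pi}{n}, \frac{\pi}{n}, \frac{(n-2)\pi}{n})$
and the Vorobets--Ward family
$(\frac{\pi}{2n}, \frac{\pi}{n}, \frac{(2n-3)\pi}{2n})$, so the task is to show
that \emph{no other} rational triangle with obtuse angle $\ge 135^\circ$ can
have a rank-$1$ orbit closure. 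The constraint $\text{(largest angle)}\ge
\frac{3\pi}{4}$ translates into $r \ge \frac{3N}{4}$, which together with
$p+q+r=N$ forces $p+q \le \frac{N}{4}$: the two small angles are tightly
pinched, and this is the inequality that drives all of the bookkeeping below.

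The key tool, assumed from the Mirzakhani--Wright criterion cited in the
abstract, is an obstruction to rank $1$ phrased in terms of the cylinder
decomposition (or, equivalently, the holonomy field and the ``no small
eigenspaces'' condition built on M\"oller and McMullen): if the surface admits
too many independent cylinder directions whose moduli cannot be simultaneously
matched by a rank-$1$ cylinder deformation, then the orbit closure must have
rank $\ge 2$, ruling out the lattice property. The strategy is therefore to
cut the problem into two regimes. In the first regime I would handle the
generic case by a counting argument: for $r \ge \frac{3N}{4}$ the associated
cyclic cover / abelian differential has enough zeros forced by the angle data
that the cylinder shearing degrees of freedom exceed what a rank-$1$ locus can
absorb, and the Mirzakhani--Wright criterion then directly forbids rank $1$
for all but finitely many residue patterns of $(p,q)$ modulo small primes.
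This is where I would show ``in most cases the orbit closure cannot have
rank $1$,'' exactly matching the language of the abstract.

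The second, and harder, regime consists of the finitely many sporadic
residue classes left over by the counting argument, together with any thin
infinite families that slip through. Here I would proceed by direct
examination: for each surviving congruence type of $(p,q,r)$ I would compute
the cylinder decomposition in two or three well-chosen directions explicitly,
extract the moduli, and check by hand (or by a finite computer verification,
in the spirit of the Kenyon--Smillie classification) whether the rank-$1$
matching condition can hold. The expected outcome is that only the two listed
families survive, and all other sporadic candidates are eliminated because
their moduli generate a field or a commensurability class incompatible with
lattice Veech behavior. \textbf{The main obstacle} I anticipate is precisely
this boundary analysis: the rank-$1$ obstruction is cleanest asymptotically,
but near the cutoff $r = \frac{3N}{4}$ the inequality $p+q \le \frac{N}{4}$ is
only barely strong enough, so the counting bound must be made fully effective
and the residual finite check must be organized carefully (by arithmetic
progressions in $N$) to guarantee completeness rather than merely covering
``most'' triangles. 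Controlling those edge cases uniformly — and certifying
that the leftover families are genuinely the Veech and Vorobets--Ward ones and
not some new sporadic lattice surface — is where the real work lies.
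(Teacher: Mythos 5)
Your high-level architecture does match the paper's: apply the Mirzakhani--Wright rank criterion to force rank $\ge 2$ for ``most'' triangles, then dispose of the leftover families and sporadic cases by explicit cylinder computations and a finite computer check (the paper's \S 10 does exactly what you describe for the one surviving infinite non-lattice family, exhibiting two parallel cylinders whose moduli have irrational ratio $4\cos^2\alpha$ and invoking Veech's remark). But there is a genuine gap at the heart of your plan: you mischaracterize the Mirzakhani--Wright criterion itself. It is not an obstruction phrased in terms of cylinder decompositions or ``shearing degrees of freedom exceeding what a rank-$1$ locus can absorb.'' It is a statement about the eigenspace decomposition of the holomorphic $1$-forms on the cyclic cover $y^n=z^p(z-1)^q$ under $y\mapsto e^{2\pi i/n}y$: if there is a unit $a\in(\Z/n)^\times$ with $2a\not\equiv 2$ such that both the $a$- and $(2-a)$-eigenspaces are nonzero, the Ahlfors variational formula produces a nonzero off-diagonal entry in a block of the period matrix that M\"oller/Filip force to be diagonal in rank $1$. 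Concretely this reduces to a purely number-theoretic condition: two of the three ``mod $n$'' inequalities $[ap]_n<[2p]_n$, $[aq]_n<[2q]_n$, $[ar]_n<[2r]_n$ must hold for some such $a$.

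Because you never arrive at this formulation, your ``counting argument'' in the first regime has no concrete content, and the claim that it leaves only ``finitely many residue patterns of $(p,q)$ modulo small primes'' is false as stated. The set of triangles where the criterion fails is \emph{not} cut out by congruence conditions on small primes: it includes six infinite one-parameter families (e.g.\ $p=1$, $q=4$, $r\equiv 7 \bmod 8$, and $r=3q+p$ with $p\in\{1,2,4\}$), and in the paper there is even a construction (Proposition 3.5) of arbitrarily large $p$ for which the criterion fails, so no uniform bound of the kind you envisage exists. Identifying exactly where the criterion fails is the real work of the paper: a case analysis on $\gcd(q,n)$ and the size of $q$ relative to $\sqrt{n}$, a rational-approximation argument to locate a full arithmetic progression of length $j(n)$ (the Jacobsthal function) inside $S_q\cap S_r$, effective bounds on $j(n)$, and a computer search up to $n\le 10000$. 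Your proposal correctly anticipates that the edge cases near $r=\frac{3N}{4}$ are delicate and that a leftover infinite family must be killed by incommensurable cylinder moduli, but without the correct form of the criterion and the unit-finding analysis, the first and largest step of the proof is missing.
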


Our work on obtuse lattice triangles builds not so much on previous approaches to the problem as on recent results of a more complex analytical nature, using the equivalent characterization of a lattice triangle as one whose unfolding generates a Teichm\"uller curve. M\"oller \cite{Moller} proved that the period matrix of a Teichm\"uller curve has a block diagonal form, a result later extended by Filip \cite{Filip} to the period matrix of any rank 1 orbit closure. Combining this fact with an application of the Ahlfors variational formula \`a la McMullen \cite[Theorem 7.5]{McMullen}, Mirzakhani and Wright \cite[Theorem 7.5]{MW}
gave a condition under which the unfolding of a triangle must have orbit closure of rank $\ge 2$. From this, one can immediately derive a simple number-theoretic criterion that all obtuse rational lattice triangles must satisfy.

The main technical result of this paper is a classification of all obtuse rational triangles with obtuse angle $> \frac{2\pi}{3}$ for which it is possible to apply the \cite{MW} criterion (this cutoff being chosen because there are significantly more triangles with smaller obtuse angle for which it is not possible).
This entails a detailed case analysis, but a key ingredient is the use of approximation by rational numbers of small denominator, combined with known estimates on a particular number-theoretic function (the Jacobsthal function, which was used by Puchta \cite{Puchta} and McMullen \cite{McMullen2} in related classifications). By these means, we give a (computer-assisted) proof of the following theorem:

\begin{thm}
\label{main}
An obtuse rational triangle with obtuse angle $> 120^\circ$ satisfies the \cite{MW} criterion if and only if it does not belong to one of six (infinite) one-parameter families of triangles and is not one of seven exceptional triangles.
\end{thm}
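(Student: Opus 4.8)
The plan is to reduce the problem to an explicit Diophantine search and then dispose of that search in two regimes: a ``large modulus'' regime handled by analytic estimates, and a ``small modulus'' regime handled by computer. First I would fix notation by writing a primitive triangle as $(a,b,c)$ with angles $\frac{a\pi}{N},\frac{b\pi}{N},\frac{c\pi}{N}$, where $a+b+c=N$, $\gcd(a,b,c)=1$, and the obtuse angle is $\frac{c\pi}{N}>\frac{2\pi}{3}$, so that the two small numerators obey $a+b<\tfrac{N}{3}$. The \cite{MW} criterion, applied to the cyclic cover underlying the unfolding and combined with the block decomposition of M\"oller and Filip, translates into a purely number-theoretic statement: the orbit closure has rank $\ge 2$ precisely when there is a \emph{witness}, that is, a residue $k\in(\Z/N)^\times$ outside a controlled exceptional set (the tautological $k\equiv\pm1$ and a few degenerate residues) such that the triple of fractional parts $\bigl(\{\tfrac{ka}{N}\},\{\tfrac{kb}{N}\},\{\tfrac{kc}{N}\}\bigr)$ lies in a prescribed region $R$ on which the relevant Ahlfors period pairing is nonzero. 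Thus ``satisfies the criterion'' becomes ``admits a witness,'' and the theorem asserts that a witness exists for every triangle outside the six families and seven exceptional triangles.

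The \emph{only if} direction is the direct one. Here I would substitute each of the six one-parameter families and each of the seven sporadic triangles into the witness condition and check that no admissible $k$ lands in $R$: for the families this is a parametric computation in the family parameter, using that the family is defined precisely by the arithmetic coincidence keeping the multiplicative orbit of $(a,b,c)$ away from $R$; for the sporadic triangles it is a finite verification. This is routine, but must be organized so that both the exceptional set and the region $R$ are specified unambiguously.

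The substance is the \emph{if} direction: given a triangle outside the list, produce a witness. The key idea is approximation by rationals of small denominator. Because $a+b<\tfrac N3$, I would approximate one of the ratios $\tfrac aN,\tfrac bN$ (or $\tfrac ab$) by a fraction $\tfrac pq$ of small denominator $q$ via continued fractions; the multiples $k$ for which $ka$ and $kb$ reduce simultaneously into the small window defining $R$ then fill out an interval $I\subseteq[1,N]$ whose length is bounded below in terms of $N/q$ and the quality of the approximation. Any $k\in I$ coprime to $N$ is a witness, so it remains only to guarantee a coprime residue in $I$. This is exactly what the Jacobsthal function $g(N)$ controls: every interval of length at least $g(N)$ contains an integer prime to $N$, so a witness exists as soon as $|I|\ge g(N)$.

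The main obstacle is making the two bounds meet. Using the known upper bounds on $g(N)$ (of the shape used by Puchta \cite{Puchta} and McMullen \cite{McMullen2}) against the lower bound $|I|\gg N/q$, one obtains $|I|\ge g(N)$ for all $N$ beyond an explicit threshold $N_0$, \emph{provided} a small-denominator approximation of adequate quality is available. The triangles for which no such approximation works are precisely those whose numerators are pinned to specific small-denominator rationals, and tracking these pinnings is what produces the six families; for each candidate pinning I would either exhibit a witness coming from a different ratio or recognize the configuration as one of the families. The delicate bookkeeping is arranging the argument so that these finitely many pinnings are genuinely exhaustive, since this is where a missed case would break the classification. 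Finally, the finitely many moduli $N<N_0$ fall outside the range of the estimates and are cleared by a direct computer search, which is where the seven exceptional triangles are found.
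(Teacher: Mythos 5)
Your overall strategy coincides with the paper's: reduce the \cite{MW} criterion to the existence of a ``usable'' residue $k$ coprime to $N$ placing two of the three fractional parts in a prescribed window, then combine Dirichlet approximation with the Jacobsthal function for large $N$ and a computer search for small $N$. But there are two genuine gaps. First, the set of candidate witnesses is not an interval of consecutive integers: the $k$ with $\{kq/N\}$ small form an arithmetic progression with common difference $q^{-1}$ (more generally a union of such progressions), so what you need is the Jacobsthal property for progressions $\{\gamma+k\beta q^{-1}\}$ with $\gcd(\gamma,\beta,N)=1$ --- fixable --- and, more seriously, the progression must stay inside the window for $q$ as well as for $r$, which forces roughly $q>\sqrt N/2$ before the approximation argument gives anything. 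The entire regime $q\le\sqrt N/2$ requires a different mechanism (the paper shows that every element of $S_p$ other than $0,1$ already lies in $S_r$ there, so everything hinges on whether $S_p$ contains a usable unit), and that regime --- driven by the usability constraint $2k\not\equiv 2$ and the degenerate cases $p\in\{1,2,4\}$ --- is where three of the six families ($p=q=1$; $p=1,q=2$ with $N$ even; $p=1,q=4$, $r\equiv 7 \bmod 8$) actually arise. Your narrative attributes all six families to ``pinnings to small-denominator rationals,'' which accounts only for the families $r=3q+p$ with $p\in\{1,2,4\}$ (the denominator-$2$ approximation $x\approx\frac12$); as written, your case exhaustion would miss the first three families and, conversely, would not prove the criterion for small $q$ outside them.

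Second, the threshold $N_0$ you can extract from known Jacobsthal bounds is far too large for ``direct computer search below $N_0$'' to close the argument. Even after replacing the generic bound $j(N)\le 2^{\omega(N)}$ by the exact values of $\max_{\omega(m)=k}j(m)$ computed by Hajdu and Saradha and iterating, the paper only gets the analytic argument to work unconditionally for $N>1.47\times 10^9$; enumerating all primitive triples up to that modulus is hopeless. An additional idea is required to bridge the gap: since $\omega(N)\le 9$ in that range, one exhibits $\omega(N)+1$ prime-power multipliers $c_i<q$ all landing $c_i q^{-1}r$ in the target window, so at least one $c_i$ is coprime to $N$; a coarse computer partition of $[0,N)$ then shows this works except near the rationals of denominator $\le 3$ already treated by hand, reducing the search to $N\le 10^4$. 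Without some such device your proof is not completable in practice, and the seven sporadic triangles cannot be certified as the complete exceptional list.
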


Two of these families are known families of lattice triangles. The computer program of R\"uth, Delecroix, and Eskin \cite{RDE} has shown that the seven exceptional triangles do not have the lattice property. And by finding parallel cylinders of incommensurable moduli on the unfoldings, we are able to prove that one of the remaining families is not a family of lattice triangles. This allows us to complete the classification of rational obtuse lattice triangles with obtuse angle $\ge 135^\circ$ and prove Hooper's conjecture in this case.

The paper is organized as follows. In \S2, we derive the number-theoretic criterion in the needed form from \cite{MW}. In \S3, we establish some preliminary results about the solution sets of the inequalities derived in \S2 and explain the problems that arise when the obtuse angle is $\le 120^\circ$. In \S4, we outline the case analysis that will be used to prove Theorem \ref{main}. This case analysis will be carried out in \S\S5--8. A description of the computer search and its results is in \S9. Finally, \S10 contains the geometric argument used to rule out the remaining family of triangles with angle $\ge 135^\circ$, proving Theorem \ref{realmain}.

\textbf{Acknowledgements:} This work was done during the (online) 2020 University of Michigan REU. We are very grateful to Alex Wright for suggesting the problem and for his help and guidance. We would also like to thank Alex Eskin for kindly offering to use his program, currently under development by Vincent Delecroix and Julian R\"uth, to check our exceptional triangles, as well as the first triangle in each infinite family we found.

\section{Derivation of the Criterion}

In this section, we will briefly explain the setup in \cite[\S\S6--7]{MW} and explain how \cite[Theorem 7.5]{MW} implies the criterion we will state in \ref{derivation}.

First of all, we will set a standard form to refer to rational triangles. We write $(p,q,r)$, with $p,q,r \in \N, \,\, p \le q \le r, \,\,\gcd(p,q,r)=1$, to refer to the triangle with angles $(\frac{p\pi}{n}, \frac{q\pi}{n}, \frac{r\pi}{n})$, where $n=p+q+r$. (The gcd condition ensures that the choice of $p,q,r$ is unique, and the $p \le q \le r$ condition fixes the order.)

Now, the unfolding of the triangle $(p,q,r)$ is the translation surface $(X,\omega)$ where $X$ is the normalization of the curve $y^n = z^p(z-1)^q$ with holomorphic differential $\omega = y^{-n+1} z^{p-1} (z-1)^{q-1} \, dz$. $X$ has the obvious automorphism $y \mapsto e^{2\pi i/n}y$, and the space of holomorphic 1-forms on $X$ can be broken into eigenspaces, where the eigenspace of eigenvalue $e^{2\pi ai/n}$ has dimension $\{\frac{-ap}{n}\} + \{\frac{-aq}{n}\} + \{\frac{-ar}{n}\} - 1$ (the notation $\{x\}$ meaning the fractional part of $x$, $x - \floor{x}$). An explicit basis for each eigenspace is described in \cite[Lemma 6.1]{MW}. Plugging eigenforms of eigenvalues $e^{2\pi ai/n}$ and $e^{2\pi bi/n}$ into the integral given by the Ahlfors-Rauch variational formula, \cite[Proposition 7.3]{MW} shows that the resulting variation of the period matrix is nonzero if and only if $a + b \equiv 2$ mod $n$. This fact is then applied in \cite[Theorem 7.5]{MW} to see that if there is $a \in (\Z/n)^\times$ with $2a \not\equiv 2$ mod $n$ such that both the $e^{a2\pi i/n}$ and $e^{(2-a)2\pi i/n}$ eigenspaces are nonzero, then the unfolding has orbit closure of rank $> 1$, as this corresponds to a nonzero off-diagonal derivative of the period matrix in what would otherwise be a diagonal block (by work of Filip \cite{Filip}).

\begin{prop}
\label{derivation}
The unfolding of an obtuse triangle $(p,q,r)$ in the notation described above does not have the lattice property if there exists some $a \in (\Z/n)^\times$ with $2a \not\equiv 2$ mod $n$ such that two of the following ``mod $n$" inequalities are satisfied:
\begin{equation}
\label{twoone}
ap < 2p, \,\,\,\, aq < 2q, \,\,\,\, ar < 2r
\end{equation}
\end{prop}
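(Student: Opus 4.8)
The plan is to translate the criterion recalled just above from \cite[Theorem 7.5]{MW} into a statement about residues and then settle it with a single summation identity. Write $(p_1,p_2,p_3)=(p,q,r)$, and for an integer $b$ let $bp_i \bmod n$ denote the least nonnegative residue of $bp_i$ modulo $n$; set $s_i = a p_i \bmod n$ and $t_i = (2-a)p_i \bmod n$. Since $a\in(\Z/n)^\times$ and $0<p_i<n$, no $s_i$ vanishes; and because $a(p+q+r)=an\equiv 0 \pmod n$, the sum $s_1+s_2+s_3$ is a multiple of $n$ lying in $[3,3n-3]$, hence equals $n$ or $2n$. Substituting the residues into the dimension formula recalled above (via $\{-x\}=1-\{x\}$ for $x\notin\Z$) gives that the eigenspace of eigenvalue $e^{2\pi a i/n}$ has dimension $2-\tfrac1n(s_1+s_2+s_3)$, so it is nonzero iff $s_1+s_2+s_3=n$. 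The same computation for $2-a$ shows the $e^{2\pi(2-a)i/n}$ eigenspace is nonzero iff every $t_i\ne 0$ and $t_1+t_2+t_3=n$; if some $t_i=0$ the formula returns dimension $\le 0$.

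Next I would pin down $m_i := 2p_i \bmod n$. From $p\le q\le r$ and $p+q+r=n$ we get $p\le n/3$ and $q\le (n-1)/2$, so $2p,2q<n$ and $m_1=2p$, $m_2=2q$; and since the triangle is \emph{obtuse}, $r>n/2$, so $2r>n$ and $m_3=2r-n$. Hence
\[
m_1+m_2+m_3 = 2p+2q+(2r-n)=2n-n=n .
\]
Interpreting the ``mod $n$'' inequality of \eqref{twoone} in coordinate $i$ as $s_i<m_i$, a check of the three cases $s_i<m_i$, $s_i=m_i$, $s_i>m_i$ yields the coordinatewise identity $s_i+t_i=m_i$ when $s_i\le m_i$ and $s_i+t_i=m_i+n$ when $s_i>m_i$ (in particular $t_i=0$ exactly when $s_i=m_i$). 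Summing and using $\sum_i m_i=n$ gives
\[
(s_1+s_2+s_3)+(t_1+t_2+t_3)=n(1+N_{>}),
\]
where $N_{>}$ is the number of coordinates with $s_i>m_i$.

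Now suppose at least two of the inequalities $s_i<m_i$ hold. If all three held, or if two held and the third were an equality, then $N_{>}=0$, so the identity forces $\sum s_i+\sum t_i=n$; but $\sum s_i\ge n$ and the strict-$<$ coordinates contribute $t_i=m_i-s_i\ge 1$, giving $\sum s_i+\sum t_i>n$, a contradiction. Thus exactly two inequalities hold strictly, the third has $s_i>m_i$, and no equality occurs; in particular every $t_i\ne 0$. With $N_{>}=1$ the identity reads $\sum s_i+\sum t_i=2n$, and since $\sum s_i\in\{n,2n\}$ while $\sum t_i$ is a positive multiple of $n$, both must equal $n$. Hence both the $e^{2\pi a i/n}$ and $e^{2\pi(2-a)i/n}$ eigenspaces are nonzero, and as $2a\not\equiv 2\pmod n$, the criterion of \cite{MW} shows the unfolding does not have the lattice property.

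The step I expect to be most delicate is the bookkeeping around degeneracies: $2-a$ need not be a unit, so some $t_i$ could vanish, and the inequalities could hold with equality rather than strictly. The identity $\sum s_i+\sum t_i=n(1+N_{>})$, resting on the obtuse-triangle computation $\sum m_i=n$, is exactly what disposes of these uniformly — it simultaneously excludes ``three inequalities'' and ``two inequalities plus an equality'' and pins $\sum s_i=n$ as soon as two strict inequalities are present, so no separate treatment of the non-unit case is required.
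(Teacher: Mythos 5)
Your proof is correct and follows essentially the same route as the paper's: both translate the nonvanishing of the two eigenspaces into the residue identities $\sum_i [ap_i]_n = n$ and $\sum_i [(2-a)p_i]_n = n$, use obtuseness to compute $\sum_i [2p_i]_n = n$, and then count how many coordinates satisfy $[ap_i]_n < [2p_i]_n$. Your identity $\sum_i s_i + \sum_i t_i = n(1+N_{>})$ packages the paper's case analysis of $[(-2+a)x]_n$ cleanly, and by running the argument in the forward direction (from two strict inequalities to nonvanishing) you make explicit a step the paper leaves implicit, namely that two strict inequalities already force $\sum_i [ap_i]_n = n$ rather than taking it as a hypothesis.
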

Let $[x]_n$ be the representative of the mod $n$ equivalence class of $x$ which is in $[0,n)$. Then, for example, the ``mod $n$" inequality $ap < 2p$ is satisfied if $[ap]_n < [2p]_n$. In what follows, we will almost entirely want to consider numbers ``mod $n$"; where it seems unlikely to cause confusion, we will drop the bracket notation, which tends to clutter up equations. In a similar spirit, we will write $x \in (\Z/n)^\times$ to mean ``$x$ is coprime to $n$."

\begin{proof}
By the discussion above, a triangle does not have the lattice property if there is some $a \in (\Z/n)^\times$ with $2a \not\equiv 2$ mod $n$ such that
$$\left\{\frac{-ap}{n}\right\} + \left\{\frac{-aq}{n}\right\} + \left\{\frac{-ar}{n}\right\} > 1$$
and
$$\left\{\frac{-(2-a)p}{n}\right\} + \left\{\frac{-(2-a)q}{n}\right\} + \left\{\frac{-(2-a)r}{n}\right\} > 1$$
(these being the conditions for the eigenspaces to be nonzero). We rewrite as follows: first of all, as $c(p+q+r) = cn \equiv 0$ mod $n$ for any $c$, the left-hand sides of these two inequalities must be integral, and as $\{x\} < 1$ (so each sum is $<3$), these inequalities are equivalent to
\begin{equation}
\label{twotwo}
[-ap]_n + [-aq]_n + [-ar]_n = 2n
\end{equation}
and
\begin{equation}
\label{twothree}
[(-2+a)p]_n + [(-2+a)q]_n + [(-2+a)r]_n = 2n
\end{equation}
Since $a$ is a unit, we have $[-ax]_n = n - [ax]_n$ for $x = p,q,r$ (this being true unless $ax \equiv 0$). We then note that
$$[(-2+a)x]_n = \begin{cases} [-2x]_n + [ax]_n & [ax]_n < [2x]_n \\ [-2x]_n + [ax]_n - n & [ax]_n \ge [2x]_n \end{cases}.$$
By Equation \ref{twotwo}, we have
$$[ap]_n + [aq]_n + [ar]_n = n$$
Assuming that $(p,q,r)$ is an obtuse triangle, we must have $r > \frac{n}{2}$, $p,q < \frac{n}{2}$, and so
$$[-2p]_n = n - 2p, \,\,\, [-2q]_n = n - 2q, \,\,\, [-2r]_n = 2n - 2r$$
So
$$[-2p]_n + [ap]_n + [-2q]_n + [aq]_n + [-2r]_n + [ar]_n = 3n$$
which means that in order to satisfy Equation \ref{twothree}, we must have $[ax]_n < [2x]_n$ for exactly two of $p,q,r$. And as
$$[2p]_n + [2q]_n + [2r]_n = 2p + 2q + 2r - n = n$$
it is impossible to have $[ax]_n < [2x]_n$ for all of $p,q,r$ (as the sum of the $[ax]_n$ would be between 0 and $n$), so we can replace ``exactly two" with ``two."
\end{proof}

For the rest of the paper, we will investigate how to find such an $a$. As we will see, the main difficulty is not finding elements of $\Z/n$ satisfying two of the inequalities, but ensuring that one of these elements is a unit. (The $2a \not\equiv 2$ condition is generally not a major consideration, although it is often part of the problem in the families of triples where there is no such $a$.) Initially, it was hoped that such an $a$ could always be found for triples with sufficiently large $p$ or $n$, but this has turned out not to be true. (See Proposition \ref{counterexamples}
for more details.)

\section{Preliminaries}
We start by defining notation and terminology that will be used throughout the paper.

\begin{defn}
It will often be useful to refer to the set of solutions of one of the inequalities described in Equation \ref{twoone}, so we set
$$S_p := \{a \in [0, n): [ap]_n < [2p]_n\}$$
(defining $S_q$ and $S_r$ similarly).
\end{defn}
It is worth noting that, in this obtuse case,
$$[2p]_n = 2p, \,\,\,\, [2q]_n = 2q, \,\,\,\, [2r]_n = 2r - n = n -2p - 2q$$

\begin{defn}
We will call a unit $a \in (\Z/n)^\times$ with the property $2a \not\equiv 2$ mod $n$ a ``usable" unit. 
\end{defn}
Clearly, there can be at most two unusable units, 1 and $\frac{n}{2}+1$. Of these, 1 is always unusable, and $\frac{n}{2}+1$ is unusable iff 4 divides $n$. (First of all, for $\frac{n}{2}+1$ to be an integer, $n$ must be even. And if $n$ is divisible by 2 but not 4, then $\frac{n}{2}+1$ is even, therefore not a unit. But if 4 divides $n$, $\frac{n}{2}+1$ is its own inverse.)

\begin{rem}
With these definitions, we can restate our problem in the following way: when does one of the intersections $S_p \cap S_q, S_p \cap S_r, S_q \cap S_r$ contain a usable unit?
\end{rem}

We start by considering usable units in $S_p$ or $S_q$.

\begin{rem}
\label{spsq}
Suppose $x < \frac{n}{2}$, and set $a = \gcd(x,n), x = ab, n = ac$. As $\gcd(b,c) = 1$, $b$ is invertible in $\Z/c$, so we can let $d$ be the representative in $[0,c)$ of the equivalence class of $b^{-1}$ in $\Z/c$. (In the future, we will write ``let $d = b^{-1} \in (\Z/c)^\times$".)
Then
$$S_x = \{kd + lc: 0 \le k < 2b, \, 0 \le l < a\}$$
\end{rem}

\begin{prop}
\label{usableunits}
If $x < \frac{n}{2}$, $S_x$ contains a usable unit if and only if none of the following is true: $x = 1$; $x = 2$ and $n$ is even; or $x = 4$ and $n \equiv 4$ mod 8.
\end{prop}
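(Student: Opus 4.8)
The plan is to count the units in $S_x$ exactly and compare this count with the number of unusable units, which we already know is at most two (namely $1$ and $\tfrac{n}{2}+1$). Adopting the notation of Remark \ref{spsq}, write $g = \gcd(x,n)$, $b = x/g$, $c = n/g$, and $d = b^{-1} \in (\Z/c)^\times$, so that $\gcd(b,c)=1$ and $2b < c$. Starting from $S_x = \{kd + lc : 0 \le k < 2b,\ 0 \le l < g\}$, I would first observe that the reduction map $(\Z/n)^\times \to (\Z/c)^\times$ is surjective with fibers of constant size $\phi(n)/\phi(c)$, and that the residue $kd \bmod c$ is a unit mod $c$ precisely when $\gcd(k,c)=1$ (since $d$ is a unit mod $c$). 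As $S_x$ consists of full residue classes mod $c$, this yields the clean formula
\[
\#\{\text{units in } S_x\} = \frac{\phi(n)}{\phi(c)} \cdot \#\{1 \le k < 2b : \gcd(k,c) = 1\}.
\]

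Denote the second factor by $A$. Since at most two units are unusable, whenever this count is $\ge 3$ a usable unit automatically exists, so the whole problem reduces to the cases where the count is $\le 2$. Two elementary observations drive this. First, $\phi(n)/\phi(c) = \phi(gc)/\phi(c)$ is $\le 2$ only for a short explicit list of shapes of $g$ (essentially $g \in \{1,2,3,4,6\}$ subject to congruences relating $g$ and $c$), which I would record in a short lemma by examining the prime-power contributions to $\phi$. Second, because $\gcd(b,c)=1$, both $k=1$ and $k=b$ are coprime to $c$, so $A \ge 2$ whenever $b \ge 2$.

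Combining these, I would split on whether $b = 1$. When $b \ge 2$ we have $A \ge 2$, so the count is $\ge 3$—and a usable unit exists—except when $A = 2$ and $\phi(n)/\phi(c) = 1$ hold simultaneously; in that residual case ($g = 1$, or $g=2$ with $c$ odd) one checks that the two units are exactly $1$ and the unit lift of $b^{-1}$, the latter being usable because it is neither $1$ (as $b \ne 1$) nor $\tfrac n2 + 1$ (ruled out by $x < \tfrac n2$ in the first shape, and not even a unit in the second). When $b = 1$ we have $x = g \mid n$ and the units of $S_x$ are precisely the unit lifts $1 + lc$; if $\phi(n)/\phi(c) \ge 3$ there is again a usable one, so only $\phi(n)/\phi(c) \in \{1,2\}$ remain. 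Running through the short list of such $g$ and tracking the parity of $c$, I expect to find that $g = 1$, $g = 2$ with $n$ even, and $g = 4$ with $c$ odd are exactly the situations where both units coincide with $\{1, \tfrac n2 + 1\}$, while $g = 3$ and $g = 6$ always produce a usable second unit.

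The main obstacle is this last $b = 1$, count-$=2$ analysis: there one cannot merely count but must identify which residue class the second unit lies in and decide whether it equals $\tfrac n2 + 1$. This is exactly where the delicate divisibility-by-$4$ behavior enters—the element $\tfrac n2 + 1$ is a unit only when $4 \mid n$, and the interplay between the parity of $c$ and the value of $g$ is what carves out the precise exceptional conditions ``$n$ even'' for $x=2$ and ``$n \equiv 4 \bmod 8$'' for $x = 4$. The remaining ingredients (the $\phi$-ratio classification and the $b \ge 2$ cases) should be routine once the counting formula is in hand.
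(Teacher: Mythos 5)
Your proposal is correct and follows essentially the same route as the paper: both count the units of $S_x$ residue class by residue class mod $c$ via the Chinese remainder theorem, note that at most two units can be unusable, and then explicitly dispatch the residual cases where $\phi(\gcd(x,n)) \le 2$ and $b=1$ (i.e.\ $x \in \{1,2,3,4,6\}$ dividing $n$), identifying the second unit and comparing it with $\tfrac{n}{2}+1$. The only cosmetic difference is that you make the unit count exact, $\tfrac{\phi(n)}{\phi(c)} \cdot A$, where the paper contents itself with the lower bounds $\phi(a)$ per class and two unit classes $1, d$.
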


\begin{proof}
We break into cases based on $\gcd(x,n)$:

If $\gcd(x,n) = 1$, then if $1 < x$ $(< \frac{n}{2}+1)$, $x^{-1} \in S_x$ is a usable unit. (And if $x = 1$, then $S_1 = \{0,1\}$ does not contain a usable unit.)

If $1 < \gcd(x,n) < x$, then $S_x$ contains all elements of $\Z/n$ equivalent to $kd$ mod $c$, for $0 \le k < 2b$. Any unit mod $c$ is coprime to all prime divisors of $c$, so to be a unit mod $n$, it suffices to be coprime to all remaining prime divisors of $a$. In particular, there are at least $\phi(a)$ units mod $n$ equivalent to a given unit mod $c$, by the Chinese remainder theorem. (As usual, we use $\phi$ to denote Euler's totient function.) We know that there are at least two units mod $c$ of the form $kd, 0 \le k < 2b$ (namely, $1$ and $d$), so $\phi(a) \ge 2$ implies that $S_x$ contains $\ge 4$ units, of which $\ge 2$ must be usable. On the other hand, $a \ne 1$ and $\phi(a) < 2$ would imply $a = 2$, in which case the unusable units are $\equiv 1$ mod $c$, and so the one or two units in $S_x$ $\equiv d$ mod $c$ must be usable.

Finally, if $\gcd(x,n) = x$, we apply the same argument as in the previous case, except that the two known units 1 and $d$ coincide. So $\phi(a) \ge 3$ implies that $S_x$ contains $\ge 3$ units, of which $\ge 1$ must be usable, but one needs to consider the cases $\phi(a) < 3$, i.e., $a = 1,2,3,4,6$. For $a = 3,6$, $S_x$ contains $\frac{n}{3}+1, \frac{2n}{3}+1$, of which $\ge 1$ must be a usable unit. So the only cases in which $S_x$ might not contain a usable unit are $x=1,2,4$ (dividing $n$). As mentioned already, $S_1 = \{0,1\}$ does not contain a usable unit. If $x = 2$ and $n$ is even, then $S_2 = \{0,1, \frac{n}{2}, \frac{n}{2}+1\}$ does not contain a usable unit. And if $x = 4$ and $x$ divides $n$, then $S_4 = S_2 \cup \{\frac{n}{4}, \frac{n}{4}+1, \frac{3n}{4}, \frac{3n}{4}+1\}$, of which the potential usable units are $\frac{n}{4}+1, \frac{3n}{4} + 1$. If 8 divides $n$, these are units (as they are coprime to $\frac{n}{4}$, which shares the same prime factors as $n$), and otherwise, if $n \equiv 4$ mod $8$, these are even, so not units.
\end{proof}

\begin{rem}
\label{remark}
Unfortunately, there are more cases when $S_r$ does not contain any usable units. In fact, this is guaranteed to happen if $r = \frac{c}{2c-1}n$; multiples of $r$ are multiples of $\frac{1}{2c-1}n = 2r - n$, and so $[ar]_n < [2r]_n$ implies $[ar]_n = 0$, i.e., $S_r$ consists entirely of multiples of $2c - 1$.
\end{rem}
This remark is essential in the following proposition:

\begin{prop}
\label{counterexamples}
There is no constant $c$ such that the criterion of \cite{MW} can be applied to all triples with $p > c$.
\end{prop}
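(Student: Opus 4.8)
The plan is to produce, for every bound $c$, a triple $(p,q,r)$ with $p>c$ to which the \cite{MW} criterion cannot be applied, i.e. for which none of $S_p\cap S_q$, $S_p\cap S_r$, $S_q\cap S_r$ contains a usable unit. The engine is Remark \ref{remark}: if I choose $r=\frac{c_0}{2c_0-1}n$ with $2c_0-1\mid n$ and $c_0\ge 2$, then $S_r$ consists entirely of multiples of $2c_0-1$, and since $1<2c_0-1$ divides $n$, these are all non-units. Consequently $S_p\cap S_r$ and $S_q\cap S_r$ contain no units whatsoever, and the whole problem collapses to arranging that $S_p\cap S_q$ contains no usable unit. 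Writing $n=(2c_0-1)N$, the constraint becomes $r=c_0N$ and $p+q=(c_0-1)N$, while $r>n/2$ keeps the triangle obtuse; this still leaves a one-parameter family's worth of freedom in which to push $p\to\infty$.

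Next I would exhibit an explicit infinite subfamily (indexed by a growing integer, with $p\to\infty$, $p\le q\le r$, and $\gcd(p,q,r)=1$) on which $S_p\cap S_q$ has no usable unit. The elements $0$ and $1$ always lie in $S_p\cap S_q$, with $0$ a non-unit and $1$ unusable; when $n$ is even the same holds for $\frac n2$ and $\frac n2+1$. The goal is therefore to choose the parameters so that every remaining element of $S_p\cap S_q$ shares a prime factor with $n$. Here I would exploit the symmetry $a\in S_x\iff(2-a)\in S_x$, valid whenever $ax\not\equiv 0$, which makes $S_p\cap S_q$ invariant under $a\mapsto 2-a$ with $1$ and $\frac n2+1$ as fixed points, so that candidate usable units occur in pairs. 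To force the nontrivial elements onto non-units I would take $n$ divisible by several small primes, so that units have low density, and tie the residues of $p$ and $q$ to that factorization, controlling $S_p$ and $S_q$ through Remark \ref{spsq} (by which membership in $S_x$ depends on $a$ only modulo $n/\gcd(x,n)$), verifying the base members of the family directly and the general member by a uniform congruence argument.

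The step I expect to be the genuine obstacle is precisely this last one: keeping $S_p\cap S_q$ unit-free while $p$ is large. The reason it is delicate --- and, ultimately, the reason no constant $c$ can work --- is a density count. Since $|S_x|=2x$, an inclusion--exclusion heuristic gives $|S_p\cap S_q|\approx\frac{4pq}{n}$, which grows with $p$, so one cannot hope to make the intersection small. In fact the coprimality $\gcd(p,q,n)=\gcd(p,q,r)=1$ forces $\mathrm{lcm}\!\left(n/\gcd(p,n),\,n/\gcd(q,n)\right)=n$, so $S_p\cap S_q$ is never a union of proper residue classes and behaves generically; a naive attempt to make the odd part of the intersection trivial collapses back to small $p$. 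The entire content of the construction is thus to defeat this density: to steer a large, explicitly described set entirely onto residues that nontrivially share a factor with $n$. This is exactly the kind of fine control for which the Jacobsthal-function estimates and small-denominator rational approximation used elsewhere in the paper are suited, and where I would expect to lean on the computer search to locate and then certify a suitable family, after which $p\to\infty$ along it immediately yields the proposition.
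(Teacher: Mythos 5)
Your first step coincides with the paper's: choosing $r=\frac{c_0}{2c_0-1}n$ so that Remark \ref{remark} forces every element of $S_r$ to be a multiple of $2c_0-1\mid n$, hence a non-unit, reducing the problem to making $S_p\cap S_q$ free of usable units. But the second half of your argument is not a proof. You explicitly defer the construction of the required infinite family to ``the computer search to locate and then certify a suitable family,'' and a finite computation cannot certify a family with $p\to\infty$; as written, the proposal stops exactly where the content of the proposition begins. Moreover, your diagnosis of the difficulty points in the wrong direction: the issue is not the density $|S_p\cap S_q|\approx\frac{4pq}{n}$ (the paper makes no attempt to shrink the intersection), but the number of \emph{units} in $S_p$.

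The paper's mechanism is the following. Take $p$ an odd prime, let $m$ be the product of all integers less than $2p$ except $p$, and set $n=(xp-c)m$ with $c\equiv m^{-1}\pmod p$, together with $r=\frac{pn}{2p-1}$ and $q=\frac{(p-1)n}{2p-1}-p$. Since $\gcd(p,n)=1$ (one checks $n\equiv-1\pmod p$), Remark \ref{spsq} gives $S_p=\{kp^{-1}:0\le k<2p\}$, and because every integer $k<2p$ other than $1$ and $p$ divides $m$ and hence $n$, the only units in $S_p$ are $1$ (never usable) and $p^{-1}$. So a single congruence computation, namely $p^{-1}r\equiv\frac{n}{2p-1}$ and therefore $[p^{-1}q]_n=\frac{2p-2}{2p-1}n-1>2q$, shows $p^{-1}\notin S_q$ and finishes the proof. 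Your instinct to ``take $n$ divisible by several small primes, so that units have low density'' is the right one, but it must be sharpened to this specific divisibility condition ($m\mid n$ with $m$ covering all residues below $2p$ except $p$) and paired with the explicit verification that the one remaining candidate unit fails; without that, the proposal has a genuine gap.
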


\begin{proof}
We describe a method of constructing arbitrarily large examples in which the \cite{MW} condition is not satisfied: Let $p$ be any prime $>2$, and let $m$ be the product of all numbers $<2p$ excluding $p$. As $\gcd(m,p) = 1$, take $c = m^{-1} \in (\Z/p)^\times$. Then consider triples of the form
$$n = (xp - c)m, \,\,\,\, q = \frac{(p - 1)n}{2p - 1} - p, \,\,\,\, r = \frac{pn}{2p - 1}$$
for any positive integer $x$. (As $2p - 1$ divides $m$, $q$ and $r$ are indeed integers.) By Remark \ref{remark}, $S_r$ does not contain a usable unit; it therefore suffices to show that $S_p \cap S_q$ does not contain a usable unit. But using the fact that
$$S_p = \{kp^{-1}: 0 \le k < 2p\}$$
from \ref{spsq}, we see that the only units $S_p$ contains are $1, p^{-1}$, and as 1 is not usable, we need only check that $p^{-1} \notin S_q$. However, note that $n \equiv -1$ mod $p$, so $\frac{n+1}{p} = p^{-1} \in \Z/n$, which implies
$$p^{-1} r = \frac{n+1}{p} \frac{pn}{2p - 1} = \frac{(n+1)n}{2p - 1} \equiv \frac{n}{2p - 1} \mod n$$
$$ \implies p^{-1} q \equiv n - \frac{n}{2p - 1} - 1 = \frac{2p - 2}{2p - 1}n - 1 > \frac{2p -2}{2p - 1}n - 2p = 2q$$
so $p^{-1} \notin S_q$.
\end{proof}

Experimentally, it seems that the triples with $\frac{n}{2} < r \le \frac{2n}{3}$ and $p \ne 1,2, 4$ (as $p=1,2,4$ leads to additional problems) for which the \cite{MW} criterion is not satisfied follow roughly this pattern, in that $p$ is a small prime not dividing $n$, $n$ is highly composite, and $r = \frac{c}{2c-1}n$ for some $c$. However, the above proposition by no means gives the complete list of such triples; because of this issue, the rest of the paper focuses on triangles with obtuse angle $> 120^\circ$ (i.e., with $r > \frac{2n}{3}$), where it has been possible to identify precisely the cases in which the criterion cannot be applied.

\section{Proof Outline}
A more detailed statement of our main technical result is as follows.

\begin{thm}
\label{maintwo}
An obtuse rational triangle $(\frac{p\pi}{n}, \frac{q\pi}{n}, \frac{r\pi}{n})$ with obtuse angle $> \frac{2\pi}{3}$ satisfies the \cite{MW} criterion (implying that the orbit closure of its unfolding has rank $\ge 2$) if and only if none of the following is the case:
\begin{enumerate}
\item $p = q = 1$
\item $p = 1$, $q = 2$, $n$ is even
\item $p = 1, q = 4, r \equiv 7$ mod $8$
\item $p = 1, r = 3q+1$
\item $p = 2, r = 3q + 2$
\item $p = 4, r = 3q + 4$
\item $(p,q,r)$ is one of the following triples: $(1,4,11), (1,3,16), (2,3,17), \\(1,4,21), (1,8,19), (3,8,29), (2,11,29)$
\end{enumerate}
\end{thm}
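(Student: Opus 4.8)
The plan is to work entirely with the reformulation established in the preceding sections. By Proposition \ref{derivation} together with the remark restating the problem, the \cite{MW} criterion holds for $(p,q,r)$ precisely when one of the three intersections $S_p \cap S_q$, $S_p \cap S_r$, $S_q \cap S_r$ contains a usable unit. So I would prove the theorem as two implications: (i) if $(p,q,r)$ is not on the list, exhibit a usable unit in one of these intersections; and (ii) if it is on the list, show all three intersections are free of usable units. Throughout I will use that the obtuse hypothesis (obtuse angle $> \frac{2\pi}{3}$) is equivalent to $r > 2(p+q)$, whence $[2r]_n = n - 2p - 2q > \frac{n}{3}$, so that $S_r$ occupies more than a third of $\Z/n$; this large set is the reservoir from which most of the needed units will be drawn.

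The heart of the argument is implication (i). When $\gcd(p,n) = 1$ and $p \ge 2$, I would first try $a = p^{-1}$: it is a usable unit (a unit that is not $1$) lying in $S_p$ since $[p^{-1}p]_n = 1 < 2p$, and because $p^{-1} r \equiv -1 - p^{-1} q \pmod n$, one checks that $p^{-1} \in S_q$ exactly when $[p^{-1}q]_n < 2q$ and $p^{-1} \in S_r$ exactly when $[p^{-1}q]_n \ge 2p + 2q$. Thus $a = p^{-1}$ already disposes of every such triple except when $[p^{-1}q]_n$ falls in the window $[2q, 2p + 2q)$ of length $2p$. For this residual window, and for the cases $\gcd(p,n) > 1$, I would switch to the arithmetic-progression description of $S_p$ and $S_q$ from Remark \ref{spsq}, searching among their $\approx 2p$, resp. $2q$, progression members for one that is both a usable unit and lands in the large set $S_r$; here approximation of $r/n$ by fractions of small denominator controls which residues of these progressions enter $S_r$, while bounds on the Jacobsthal function guarantee a residue coprime to $n$ within a short enough interval. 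This forces a usable unit into an intersection once the relevant parameter is large, reducing the claim to finitely many small configurations — the content of the case analysis of \S\S5--8. The case $p = 1$ is genuinely different: $S_1 = \{0,1\}$ contains no usable unit at all, so the unit must be sought inside $S_q \cap S_r$, and this sub-problem is the source of the families with $p = 1$.

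The exceptional list should emerge as exactly the set of configurations where this unit-production degenerates. The value $p = 1$ removes the candidate $p^{-1} = 1$; the values $q \in \{1,2,4\}$ (with the appropriate congruence on $n$) make $S_q$ itself free of usable units by Proposition \ref{usableunits}; and the special denominator structure $r = 3q + p$ pins $S_r$ to a progression of multiples meeting $S_p$ and $S_q$ only in non-usable or non-unit elements, in the manner of Remark \ref{remark} and Proposition \ref{counterexamples}. The six families are then the combinations of these degeneracies for which all three intersections fail simultaneously, with families (1) and (2) being the two previously known lattice families; the seven triples of (7) are the sporadic small cases left over once the asymptotic arguments take hold.

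For implication (ii) I would verify each family structurally: use Remark \ref{spsq} and Proposition \ref{usableunits} to enumerate the few candidate usable units attached to $S_p$, $S_q$, and $S_r$, and then check — uniformly in the family parameter, via the mod-$n$ inequalities of Proposition \ref{derivation} — that each candidate either fails to be usable, fails to be a unit, or violates one of the two required inequalities; the seven triples of (7) are a finite computation. I expect implication (i) to be the main obstacle: turning the soft statement ``$S_r$ is large, so the pairwise intersections are nonempty'' into a proof that produces a genuine usable \emph{unit}, uniformly across each infinite family, is delicate precisely because coprimality to $n$ is the binding constraint — which is exactly what forces the Jacobsthal-function estimates and the lengthy split into cases.
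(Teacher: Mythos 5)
Your reformulation is right, and your inventory of tools --- usable units in the pairwise intersections, Proposition \ref{usableunits} for the degenerate values $p,q \in \{1,2,4\}$, the $r = 3q+p$ degeneracy, Dirichlet approximation plus Jacobsthal bounds, and a terminal finite computation --- is exactly the paper's inventory. The computation behind your $p^{-1}$ window is also correct: $p^{-1}r \equiv -1 - p^{-1}q$, so $p^{-1}$ lands in $S_q \cup S_r$ unless $[p^{-1}q]_n \in [2q, 2p+2q)$. But two essential ideas are missing, and without them the plan does not close. First, the progression-plus-Jacobsthal mechanism only works when the host set is long enough to contain $j(n)$ consecutive terms of a good progression; for small $q$ (the paper's regime $q \le \frac{\sqrt{n}}{2}$) neither $S_p$ nor $S_q$ is long enough, and the paper must use an entirely different argument there (Lemma \ref{qsmall}: elements of $S_p$ and $S_q$ other than $0,1$ are separated by more than $2$, which forces all of $S_p \setminus \{0,1\}$ into $S_r$). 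Your sketch treats the residual window uniformly by searching among the roughly $2p$, resp.\ $2q$, progression members with Jacobsthal control, which simply fails when $2q < j(n)$ --- and this is why the paper's primary case split is on the size of $q$ relative to $\sqrt{n}$ and on $\gcd(q,n)$, not on $p$.

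Second, ``reducing to finitely many small configurations'' is not effective as stated. With the available bounds $j(n) \le 2^{\omega(n)}$ and $\omega(n) \le 1.3841\,\ln n/\ln\ln n$, the inequality $24\,j(n)^2 < \sqrt{n}$ needed for the approximation argument only becomes automatic around $n \ge p_{25}\# > 2 \times 10^{36}$, which is not a checkable range. The paper bridges this with the Hajdu--Saradha computations of $\max_{\omega(m)=k} j(m)$, an iterated descent (Remarks \ref{ninetwo} and \ref{check}), and a separate computer-assisted interval-subdivision argument producing ten prime-power multipliers $c_1,\dots,c_{10}$, one of which must be coprime to $n$ since $\omega(n) \le 9$ in the remaining range; only then does the verification collapse to $n \le 10000$. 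These are not routine details but the load-bearing steps, so as written the proposal has a genuine gap between the asymptotic regime and the finite check, in addition to the wrong uniformization of the small-$q$ case.
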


In fact, definitive results are already known for some of the triangles on this list. As mentioned in the introduction, families 1 and 2 are known to be families of lattice triangles (see \cite{Veech}, \cite{Vorobets}, \cite{Ward}). The first element of family 3, the triangle (1,4,7), is the lattice triangle found by \cite{Hooper}, although it does not have obtuse angle $> \frac{2\pi}{3}$ and therefore does not, strictly speaking, belong on our list. And the half of family 4 with $q$ odd (and $>1$) is proven not to have the lattice property in \cite[Theorem B]{Ward}. Furthermore, the unfoldings of all the exceptional triangles in item 7 have been checked by the computer program of R\"uth, Delecroix, and Eskin \cite{RDE} and found to have dense orbit closures.

As Theorem \ref{maintwo} is proved by a rather complicated case analysis followed by computer-checking of certain triples, we will explain here how all the parts fit together. (We will basically split into cases along two major axes, the size of $\gcd(q,n)$ and the size of $q$.)

In \S5, we will derive a new form for $S_p$, which will be particularly useful when some $\gcd$ condition is satisfied. In particular, we almost entirely deal with the case $\gcd(q,n) > 2$ and partially deal with the case $\gcd(q,n) = 2$. The main tool will be the Chinese remainder theorem (cf. the proof of Proposition \ref{usableunits}), but some extra complications do arise when $\gcd(q,n)$ is a small power of 2. (This is perhaps the case where the $2a \not\equiv 2$ requirement becomes most problematic.)

In \S6, we will finish the proof for $q \le \frac{\sqrt{n}}{2}$. Applying the results of the previous section, as well as the obvious but useful fact that $c(p+q+r) \equiv 0$ mod $n$ for all $c$, we will see that in this case, $S_p \cap (S_q \cup S_r)$ contains a usable unit if $S_p$ does. There are only a few special cases in which $S_p$ does not contain a usable unit, and in these cases (still assuming $q \le \frac{\sqrt{n}}{2}$), we prove that either $S_q \cap S_r$ contains a unit or $(p,q,r)$ belongs to families (1)--(3) in Theorem \ref{maintwo}.

The case $q > \frac{\sqrt{n}}{2}$ will be addressed in \S\S7--8. In \S7, we will quickly deal with the case $\gcd(q,n) > 2$ and then, for the cases $\gcd(q,n) = 1$ or $2$, we will give the first part of a rational approximation argument to prove that if $n$ satisfies certain bounds, $S_q \cap S_r$ must contain a unit. (These bounds are related to the Jacobsthal function, which will be introduced in this section.) Although the main idea is not very complicated, there are many special cases to be checked, corresponding to the scenarios in which $\frac{q^{-1}r}{n}$ (or its counterpart in the $\gcd(q,n) = 2$ case) is well-approximated by a fraction of denominator $\le 3$. The proofs of the special cases, which make up \S8, tend to use the same few ideas, but the approach and the resulting bound are slightly different each time, so that it does not seem possible to condense the proofs in any useful way.

Using the bounds obtained in \S\S7--8 and known bounds on the Jacobsthal function (which we will introduce in Definition \ref{jacobsthal}), together with a computer experiment that greatly decreased the size of the search space, we were able to reduce the proof of Theorem \ref{maintwo} to an easy computer calculation. (It will suffice to check all triples with $n \le 10000$.) The details of this are contained in \S9.

\section{Observations about the case $\gcd(q,n) > 1$}
We start with a new characterization of $S_p$ and $S_q$ (really, of $S_x$ for any $x < \frac{n}{2}$). Thinking of multiples of $x$ (mod $n$) ``jumping along" the number line from 0 to $n$, the first two jumps starting when $S_x$ passes 0 are in the ``target zone" $[0, 2x)$, and then the jumps leave the target zone until they reach $n$ again. Translating this into a formula, we have the following:

\begin{obs}
\label{origs}
If $x < \frac{n}{2}$,
$$S_x = \bigg\{ \ceil[\bigg]{\frac{nm}{x}}, \ceil[\bigg]{\frac{nm}{x}}+1: 0 \le m < x\bigg\}$$
(where as usual, $\ceil{r}$ is the smallest integer $\ge r$).
\end{obs}

This description will be most useful in \S6, but we have introduced it now because of the following corollary:

\begin{cor}
\label{factor}
If $x$ divides $y$ and $y < \frac{n}{2}$, then $S_x \subset S_y$. In particular, $S_{\gcd(q,n)} \subset S_q$, and
if $\gcd(p,q) > 1$, then $S_p \cap S_q \supset S_{\gcd(p,q)}$ contains the usable unit $\gcd(p,q)^{-1}$ (which exists since $\gcd(p,q,n) = 1$).
\end{cor}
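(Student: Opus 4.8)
The plan is to reduce everything to the explicit description of $S_x$ given in Observation \ref{origs}, under which the divisibility inclusion becomes a transparent reindexing of the generating parameter. First I would establish the general claim by itself: if $x \mid y$ and $y < \frac n2$, then $S_x \subset S_y$. Every other assertion in the statement is a direct application of this, combined with the normalization $\gcd(p,q,n)=1$.

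To prove $S_x \subset S_y$, write $y = kx$ with $k \in \N$. Observation \ref{origs} presents every element of $S_x$ as $\ceil{nm/x}$ or $\ceil{nm/x}+1$ for some $0 \le m < x$. The key move is to set $m' = km$: then $0 \le m' \le k(x-1) = y-k < y$, and since $nm'/y = nm/x$ exactly, we get $\ceil{nm'/y} = \ceil{nm/x}$. Hence both $\ceil{nm/x}$ and $\ceil{nm/x}+1$ appear among the generators of $S_y$ furnished by Observation \ref{origs}, so the chosen element lies in $S_y$. This reindexing is the entire content of the general inclusion, and I do not expect any obstacle here. (Arguing directly from the definition of $S_x$ would instead require tracking how $[ax]_n$ behaves under multiplication by $k$, which can wrap around mod $n$ repeatedly; routing through Observation \ref{origs} sidesteps this.)

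The two applications then follow immediately. Taking $x = \gcd(q,n)$ and $y = q$ (legitimate since $\gcd(q,n) \mid q$ and $q < \frac n2$ for an obtuse triangle) yields $S_{\gcd(q,n)} \subset S_q$. For the final claim I would set $g = \gcd(p,q)$ and apply the inclusion twice, with $y = p$ and with $y = q$, both of which are $< \frac n2$, to obtain $S_g \subset S_p \cap S_q$. Since $\gcd(g,n) = \gcd(p,q,n) = 1$, the inverse $g^{-1} \in (\Z/n)^\times$ exists, and because $g \le p < \frac n2$ one has $[g^{-1}g]_n = 1 < 2g = [2g]_n$, so $g^{-1} \in S_g \subset S_p \cap S_q$. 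The one step that genuinely requires care, and the only place I anticipate any subtlety, is confirming that $g^{-1}$ is \emph{usable}, i.e. distinct from the two possibly-unusable units $1$ and $\frac n2 + 1$. That $g^{-1} \ne 1$ is immediate from the hypothesis $g > 1$. That $g^{-1} \ne \frac n2 + 1$ I would handle by cases: when $n \equiv 2 \pmod 4$ the element $\frac n2 + 1$ is even, hence not a unit, whereas $g^{-1}$ is; and when $4 \mid n$ the element $\frac n2 + 1$ is its own inverse, so $g^{-1} = \frac n2 + 1$ would force $g = \frac n2 + 1$, contradicting $g \le p < \frac n2$.
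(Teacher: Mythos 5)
Your proof is correct and follows the paper's intended route: the paper states this as an immediate consequence of Observation \ref{origs} (leaving the reindexing $m' = km$ and the usability check implicit), which is exactly what you spell out. No issues.
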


This corollary is useful for the following lemma.

\begin{lem}
\label{gcdqnnottwo}
Suppose $l > 2$ is a prime factor of $q$ and $n$, and $r > \frac{2n}{3}$. Then $S_l \cap S_r$ (and hence $S_q \cap S_n$) contains a usable unit. (The same holds when $q$ is replaced everywhere by $p$.)
\end{lem}

\begin{proof}
As $l$ divides $n$,
$$S_l = \left\{ \frac{kn}{l}, \frac{kn}{l} + 1: 0 \le k < l \right\}$$
where everything of the form $\frac{kn}{l}$ is definitely not a unit. Now, if $l^2$ divides $n$, every $\frac{kn}{l}+1$ is a unit mod $n$, and if not, then $\frac{kn}{l}+1$ is a unit except when $k = -(\frac{n}{l})^{-1}$ mod $l$. (This follows from the Chinese remainder theorem and the fact that $\frac{kn}{l}+1 \equiv 1$ modulo every prime divisor of $n$ except for possibly $l$.) Of course, $k = 0$ gives the unit 1, which is non-usable, but as we stipulated $l > 2$, none of these are $\frac{n}{2}+1$, so every element of $\{\frac{kn}{l} + 1\}$ except 1 and potentially one other element is a usable unit. Now, as $\gcd(q,r,n) = 1$, we have $\gcd(r,l) = 1$, so for each $j$, there is some $k$ so that $\frac{kn}{l} r \equiv \frac{jn}{l}$.

Now, as
$$\frac{kn}{l}+1 \in S_r \iff \left[\frac{jn}{l} + r\right]_n \in [0, 2r-n) \iff \frac{jn}{l} \in [n-r, r) \supset \left[\frac{n}{3}, \frac{2n}{3}\right]$$
any $j \in \BZ/l$ with $\frac{j}{l} \in [\frac{1}{3}, \frac{2}{3}]$ corresponds to some element $\frac{kn}{l} + 1 \in S_l \cap S_r$. As $1 \notin S_r$ and there is only potentially one non-unit of the form $\frac{kn}{l} + 1$, if there are two $j$ such that $\frac{jn}{l} \in [\frac{n}{3}, \frac{2n}{3}]$, at least one of them must correspond to a usable unit in $S_l \cap S_r$. But this condition is clearly satisfied for $l \ge 3$.
\end{proof}

The previous lemma can be applied in all cases where $\gcd(q,n)$ has a prime factor other than 2. In the following two lemmas we will basically deal with the case when $\gcd(q,n)$ is a power of two $\ge 4$, although we will revisit this case the next two sections.

\begin{lem}
\label{poweroftwo}
Suppose $\gcd(q,n) = 2^m$ for some $m \ge 3$, $q > 8$, and $r > \frac{2n}{3}$. Then $S_q \cap S_r$ contains a usable unit. Similarly, if $\gcd(q,n) = 4$, $q > 4$, and either $r \ge \frac{3n}{4}$ or 8 divides $n$, then $S_q \cap S_r$ contains a usable unit. Similarly, if $\gcd(q,n) = 2$, $q > 2$, $r \ge \frac{3n}{4}$, and 4 divides $n$, then $S_q \cap S_r$ contains a usable unit.
\end{lem}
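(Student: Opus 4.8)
The plan is to produce a usable unit inside a single convenient ``row'' of $S_q$. Write $2^m=\gcd(q,n)$, $q=2^m b$, $n=2^m N$ with $\gcd(b,N)=1$, and let $d=b^{-1}\in(\Z/N)^\times$, so that Remark~\ref{spsq} gives $S_q=\{kd+lN:0\le k<2b,\ 0\le l<2^m\}$; I will work with the row $R=\{a_l:=d+lN:0\le l<2^m\}$ (the case $k=1$), which is a genuine subset of $S_q$ as soon as $b\ge 2$ — exactly what the hypotheses $q>8$, $q>4$, $q>2$ force, apart from the single exception $q=2^m$ of the first statement, treated at the end. Note first that $r$ is odd: since $2^m$ divides $q$ and $n$ it divides $p+r=n-q$, forcing $p\equiv r\pmod 2$, and $p,r$ are not both even because $\gcd(p,q,r)=1$ while $q$ is even. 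The row $R$ then has three convenient features: every $a_l\in S_q$ (indeed $[a_lq]_n=2^m<2q$, using $bd\equiv 1\pmod N$); every $a_l$ is usable, since $a_l=1$ or $a_l=\frac n2+1=2^{m-1}N+1$ would force $d=1$, impossible when $b\ge 2$; and the unitness of $a_l$ is decided only at the prime $2$, because $a_l\equiv d\pmod{n_0}$ is automatically a unit modulo the odd part $n_0$ of $n$ (which divides $N$). Concretely, if $v_2(n)>m$ then $N$ is even, $d$ is odd, and every $a_l$ is a unit, while if $v_2(n)=m$ then $N=n_0$ is odd and $a_l$ is a unit precisely for the half of indices with $l\not\equiv d\pmod 2$.

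Next I would determine which $a_l$ lie in $S_r$. Writing $[dr]_n=e_0+e_1 N$ with $0\le e_0<N$ and $0\le e_1<2^m$, and using that $r_0:=r\bmod 2^m$ is odd, one finds $[a_l r]_n=e_0+uN$ with $u=(e_1+l r_0)\bmod 2^m$, and $l\mapsto u$ is a bijection of $[0,2^m)$. Hence $a_l\in S_r$ iff $e_0+uN<[2r]_n=2r-n$, that is, iff $u$ lies in the initial block $\{0,1,\dots,U-1\}$ with $U=\lceil (2r-n-e_0)/N\rceil$. Since $0\le e_0<N$, we get $U\ge 1$ whenever $2r-n\ge N$ and $U\ge 2$ whenever $2r-n\ge 2N$. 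Finally $l\equiv u+e_1\pmod 2$, so in the case $v_2(n)=m$ the unit condition $l\not\equiv d\pmod 2$ translates into a single fixed parity condition on $u$.

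The three statements then follow by matching the hypotheses to these bounds. When $v_2(n)>m$ every element of $R$ is a usable unit, so it suffices that $U\ge 1$, i.e. $2r-n\ge N=n/2^m$; this holds for $m\ge 2$ under $r>\frac{2n}{3}$ and for $m=1$ under $r\ge\frac{3n}{4}$, covering the $m\ge 3$ statement when $v_2(n)>m$, the $\gcd(q,n)=4$ statement when $8\mid n$, and the entire $\gcd(q,n)=2$ statement (where $4\mid n$ already forces $v_2(n)>1$). When $v_2(n)=m$ I instead need a $u\in\{0,\dots,U-1\}$ of the prescribed parity, for which $U\ge 2$ suffices, i.e. $2r-n\ge 2N=n/2^{m-1}$; this holds for $m\ge 3$ under $r>\frac{2n}{3}$ and for $m=2$ under $r\ge\frac{3n}{4}$, covering the remaining subcases of the first two statements. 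In the leftover case $q=2^m$ (so $b=1$ and $R$ collapses onto the forbidden units) the hypothesis $q>8$ forces $m\ge 4$; here $S_q=S_{2^m}=\{kN,kN+1:0\le k<2^m\}$, the usable-unit candidates are the $kN+1$ with $k\ne 0,2^{m-1}$, these lie in $S_r$ exactly when $kr\bmod 2^m$ falls in an interval containing the integers of $[\frac{2^m}{3},\frac{2\cdot 2^m}{3}]$, and for $m\ge 4$ this block is wide enough to contain an admissible residue other than $2^{m-1}$ (and of even index, in the parity-constrained subcase $v_2(n)=m$).

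I expect the main obstacle to be the parity-constrained regime $v_2(n)=m$: there only half of $R$ survives as units, the surviving candidate of smallest index can coincide with the excluded unit $\frac n2+1$, and one must guarantee $U\ge 2$ rather than merely $U\ge 1$. It is exactly this gap between needing one admissible index and needing one of each parity that dictates the quantitative difference between the hypotheses — the weaker bound $r>\frac{2n}{3}$ being admissible only once $2^{m-1}\ge 3$, and $r\ge\frac{3n}{4}$ being required when $m$ is small — so the care in the proof concentrates on pairing each $(m,v_2(n))$ regime with the sharpest $r$-bound that still yields an admissible residue of the correct parity.
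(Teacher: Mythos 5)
Your proof is correct, and it follows the same basic template as the paper's — work inside a single residue row of $S_q$ of the form $d+(\text{multiples of a divisor of }n)$, use that $r$ is odd to see how multiplication by $r$ permutes that progression, and land one term in $[0,2r-n)$ — but the execution is genuinely different and in one respect more careful. The paper replaces $d$ by an odd representative $e$ and steps by $\frac{n}{4}$ (or $\frac{n}{2}$), so that unit-ness is automatic because $\frac{n}{4}$ has the same prime divisors as $n$, and then only needs the crude pigeonhole statement that one of the translates of $er$ lies in $[0,\frac{n}{4})\subset[0,2r-n)$; you instead keep the whole row $\{d+l\frac{n}{2^m}\}$, decide unit-ness by a parity analysis at the prime $2$, and count exactly how many blocks $u\in\{0,\dots,U-1\}$ land in the target, which is what forces the $U\ge 1$ versus $U\ge 2$ dichotomy and reproduces precisely the hypotheses $r>\frac{2n}{3}$ versus $r\ge\frac{3n}{4}$. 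What your version buys is a uniform treatment of all three assertions from one computation, at the cost of the parity bookkeeping; what the paper's version buys is brevity. Notably, your separate treatment of the degenerate case $b=1$ (i.e.\ $q=2^m$, so $d=1$ and the row contains the unusable units $1$ and $\frac{n}{2}+1$) addresses a point the paper's write-up glosses over: the paper asserts $1<d$ without justification, and for $q=2^m$ with $r\in(\frac{2n}{3},\frac{3n}{4})$ the only element of the paper's set $\{1+k\frac{n}{4}\}$ lying in $S_r$ can be the unusable $\frac{n}{2}+1$, so your fallback to the full set $S_{2^m}=\{k\frac{n}{2^m},k\frac{n}{2^m}+1\}$ (in the spirit of Lemma~\ref{gcdqnnottwo}) is actually needed there. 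The only cosmetic slip is the remark that $R\subset S_q$ ``as soon as $b\ge 2$'': the row $k=1$ lies in $S_q$ for every $b\ge 1$; the hypothesis $b\ge 2$ is used only to guarantee $d\ne 1$ and hence usability, which is how you in fact use it.
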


\begin{proof}
For the first part of the assertion: we write $q = 2^mb$, $n = 2^mc$, with $\gcd(b,c) = 1$. We let $d = b^{-1} \in \BZ/c$. Then $1 < d < c = 2^{-m}n$, and $\gcd(d,c)=1$ implies that $d$ is coprime to all factors of $n$ except possibly 2. We let $e = d$ if $d$ is odd and $e = d + 2^{-m} n$ otherwise; then $1 < e < 2^{-m+1}n$ is a unit mod $n$. (We have that $e$ is coprime to all factors of $n$ except possibly 2. If $d$ is odd, then $d=e$ is coprime also to 2, therefore to $n$. On the other hand, if $d$ is even, then $c = 2^{-m}n$ must be odd, so $d + 2^{-m}n$ is odd and still coprime to $2^{-m}n$.) Furthermore, the set $\{e + \frac{kn}{4}\}$ consists of units mod $n$, as $\frac{n}{4}$ and $n$ have the same prime divisors. And as $1 < e < \frac{n}{4}$, this set consists of usable units. Then, as $\gcd(q,r,n) = 1$, $r$ is odd, so $\{(e + \frac{kn}{4})r\} = \{er + \frac{ln}{4}\}$ has some element in $[0, \frac{n}{4}) \subset [0, 2r-n)$. This means that at least one of the elements of $\{e + \frac{kn}{4}\}$ is a usable unit in $S_q \cap S_r$.

For the assertion about $\gcd(q,n) = 4$ and $r \ge \frac{3n}{4}$ and the assertion about $\gcd(q,n) = 2$, the proof is the same, except that we consider the set $\{e + \frac{kn}{2}\}$, and $[0, \frac{n}{2}) \subset [0, 2r-n)$. (In the case $\gcd(q,n) = 2$, $d$ is odd because 4 divides $n$, so $c$ is even.) For the assertion about $\gcd(q,n) = 4$ and 8 divides $n$, $d$ must be odd, and we consider the set $\{d + \frac{kn}{4}\}$.
\end{proof}

\begin{lem}
\label{gcdfour}
Suppose $\gcd(q,n) = 4$ and neither of the two conditions in the previous lemma are satisfied (i.e., 8 does not divide $n$ and $\frac{2n}{3} < r \le \frac{3n}{4}$), and suppose $q > 16$. Then $S_q \cap S_r$ contains a usable unit.
\end{lem}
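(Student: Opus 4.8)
The plan is to adapt the mechanism of Lemma \ref{poweroftwo}, while overcoming the parity obstruction that makes this case genuinely harder. Write $n = 4c$ and $q = 4b$ with $\gcd(b,c) = 1$; since $8 \nmid n$ we have $c$ odd, and since $q > 16$ we have $b \ge 5$. Because $\gcd(q,r,n) = 1$ while $2 \mid q$ and $2 \mid n$, the side $r$ is odd. Recall from Remark \ref{spsq} that $S_q = \{kd + lc : 0 \le k < 2b,\ 0 \le l < 4\}$, where $d = b^{-1} \in (\Z/c)^\times$. The target zone for $S_r$ is $[0, 2r - n)$, whose length $2r - n$ lies in $(\frac{n}{3}, \frac{n}{2}]$. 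The essential difference from the previous lemma is that, since $r \le \frac{3n}{4}$, this interval no longer contains the full half-circle $[0, \frac{n}{2})$, so we cannot simply say that one member of an antipodal pair of units must land in it. Worse, because $c$ is odd, adding $c = \frac{n}{4}$ to a unit reverses its parity, so the elements $e + k\frac{n}{4}$ exploited before alternate between units and non-units; only the coarse step $\frac{n}{2}$ preserves units, and a lone pair of units spaced $\frac{n}{2}$ apart can miss an interval shorter than a half-circle.

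First I would package the usable units of $S_q$ into antipodal pairs. For each $k$ with $0 \le k < 2b$ and $\gcd(k,c) = 1$, exactly one $l \in \{0,1\}$ makes $a_k := kd + lc$ odd (as $c$ is odd), and then both $a_k$ and its antipode $a_k + \frac{n}{2} = kd + (l+2)c$ lie in $S_q$, are units (they reduce to the unit $kd$ modulo $c$ and are odd, hence coprime to $4c$), and—after discarding the two residues $1$ and $\frac{n}{2}+1$—are usable. Since $(a + \frac{n}{2})r \equiv ar + \frac{n}{2} \pmod n$ (using that $r$ is odd, so $\frac{n}{2}r \equiv \frac{n}{2}$), the pair meets $S_r$ exactly when $a_k r \bmod \frac{n}{2} \in [0, 2r - n)$. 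Thus it suffices to produce an admissible $k$ for which the point $P_k := a_k r \bmod \frac{n}{2}$ avoids the ``bad arc'' $B := [\,2r - n,\ \frac{n}{2})$, whose length $\frac{3n}{2} - 2r$ is less than $\frac{n}{6} = \frac{1}{3}\cdot\frac{n}{2}$ because $r > \frac{2n}{3}$.

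It remains to show that the $P_k$ cannot all be trapped in the short arc $B$. Reducing modulo $\frac{n}{2} = 2c$ and using $cr \equiv c \pmod{2c}$, one computes $P_k \equiv k\,(dr) + \epsilon_k c \pmod{2c}$, where $\epsilon_k \in \{0,1\}$ is forced by the parity of $kd$. The spread of these points is controlled by $\gcd(dr, 2c) = 2^{\delta}\gcd(r,c)$ with $\delta \in \{0,1\}$. When $\gcd(r,c)$ is small the multiples $k(dr)$ are well distributed modulo $2c$, and since $q > 16$ furnishes enough admissible residues $k$ (coprime to $c$), I would select two admissible indices $k, k'$ with $kd \equiv k'd \pmod 2$—so that the parity offsets $\epsilon_k c$ cancel—and with $(k' - k)(dr) \bmod 2c$ landing in the middle third $[\frac{2c}{3}, \frac{4c}{3}]$. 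Then $P_k$ and $P_{k'}$ are at circular distance $\ge \frac{2c}{3} > |B|$, so they cannot both lie in $B$ and one of them witnesses a usable unit in $S_q \cap S_r$.

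The main obstacle—and precisely the reason this case is split off with the strengthened hypothesis $q > 16$—is controlling this distribution when $k$ is restricted to residues coprime to $c$ and when $\gcd(r,c)$ is large, since then the number of distinct values of $k(dr) \bmod 2c$ collapses and the forced offset $\epsilon_k c$ can conspire to keep every admissible $P_k$ inside $B$. I expect to resolve this by splitting into a few subcases according to $\gcd(r,c)$ and the parity of $d$, in each producing an explicit admissible pair $k, k'$ of the same $kd$-parity whose images are far apart; the bound $q > 16$ (equivalently $b \ge 5$) is exactly what guarantees that such residues coprime to $c$ exist in the range $[0, 2b)$, even after the two usability exclusions.
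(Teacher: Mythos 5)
Your setup matches the paper's (same factorization $n=4c$, $q=4b$ with $c$ odd, same observation that $r$ is odd and that the obstruction is the target zone $[0,2r-n)$ being shorter than a half-circle), but the proof has a genuine gap: the entire content of the lemma is the production of a usable unit of $S_q$ whose image under multiplication by $r$ lands in $[0,2r-n)$, and you defer exactly that step to an unexecuted case analysis (``I expect to resolve this by splitting into a few subcases\dots''). Moreover, the specific mechanism you propose --- two antipodal pairs $a_k, a_{k'}$ with $\epsilon_k=\epsilon_{k'}$ and $(k'-k)dr \bmod 2c$ in the middle third of $[0,2c)$ --- is not always available: the admissible $k$ are confined to $[0,2b)$ and the parity constraint further restricts the usable differences $k'-k$, so when, say, $2dr \bmod 2c$ is very small, none of the achievable multiples $(k'-k)dr$ reaches the middle third and your separation argument gives nothing. (In that regime the conclusion still holds, but via the absolute position of $P_k$ rather than via a pairwise distance, so you would need to interleave a second mechanism; none of this is carried out.)

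You also appear to have missed the trick that makes the paper's proof close up. You write that ``only the coarse step $\frac{n}{2}$ preserves units,'' but the paper gets four usable positions at each of the scales $2y$ and $4y$ precisely by adding $\frac{n}{4}=c$ and $\frac{3n}{4}=3c$ to the \emph{even} elements $2y$ and $4y$: since $c$ is odd, $2y+\frac{n}{4}$, $2y+\frac{3n}{4}$, $4y+\frac{n}{4}$, $4y+\frac{3n}{4}$ are odd, coprime to $c$, hence units, and usable once $b>4$. The paper then runs a successive-narrowing chain: if neither $yr$ nor $yr+\frac{n}{2}$ lies in $[0,\frac{n}{3}]$ one may take $yr\in(\frac{n}{3},\frac{n}{2})$; if $(2y+\frac{n}{4})r$ and $(2y+\frac{3n}{4})r$ also miss, then $2yr$ is pinned to $(\frac{2n}{3},\frac{3n}{4})$; and then $(4y+\frac{3n}{4})r\in(\frac{n}{12},\frac{n}{4})$ is forced into the target zone. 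Recovering a complete proof along your lines would require either rediscovering this quarter-offset device or a genuinely new distribution argument for the points $P_k$; as written, the proposal establishes only the framework.
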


\begin{proof}
As before, we write $q = 4b$, $n = 4c$, with $\gcd(b,c) = 1$ and $c$ odd. We let $d = b^{-1} \in (\BZ/c)^\times$; then letting $y = d$ if $d$ is odd and $y = d + \frac{n}{4}$ otherwise, we have that $y, y + \frac{n}{2} \in (\Z/n)^\times$ by the Chinese remainder theorem. (These are coprime to $c$, therefore to all prime divisors of $n$ but 2, and we choose $y$ to be odd.) For the same reason,
$$\left\{y, y + \frac{n}{2}, 2y + \frac{n}{4}, 2y + \frac{3n}{4}, 4y + \frac{n}{4}, 4y + \frac{3n}{4}\right\}$$
are all units mod $n$. (Recall that $c$ is odd, so $2y, 4y \in (\BZ/c)^\times$.) Furthermore, if $q > 16$, these are all usable units, as unusable units are $\equiv 1$ mod $c$, but $b > 4$ is the smallest multiple of $y$ such that $by \equiv 1$ mod $c$.

We will see that one of these must be in $S_r$: First of all, as $r$ is odd, we have $y(r + \frac{n}{2}) \equiv yr + \frac{n}{2}$, and if neither of these is in $[0, 2r-n) \supset [0, \frac{n}{3}]$, we can assume $yr \in (\frac{n}{3}, \frac{n}{2})$. (Otherwise we switch $y$ and $y + \frac{n}{2}$.) Then $2yr \in (\frac{2n}{3}, n)$, and as $r$ is odd, $\{\frac{n}{4}r, \frac{3n}{4}r \} = \{\frac{n}{4}, \frac{3n}{4}\}$. Assuming without loss of generality that $\frac{n}{4}r \equiv \frac{n}{4}$, $yr \in (\frac{n}{3}, \frac{n}{2})$ implies $(2y + \frac{n}{4})r \in (\frac{11n}{12}, \frac{n}{4})$. (Otherwise we would pick $2y + \frac{3n}{4}$.) Then $2y + \frac{n}{4} \notin S_r$ would imply $(2y + \frac{n}{4})r \in (\frac{11n}{12}, n)$, i.e., $2y \in (\frac{2n}{3}, \frac{3n}{4})$. Repeating this argument, we assume without loss of generality that $\frac{3n}{4} r \equiv \frac{3n}{4}$, and then $2y \in (\frac{2n}{3}, \frac{3n}{4})$ implies $(4y + \frac{3n}{4})r \in (\frac{n}{12}, \frac{n}{4})$, which is within our target zone. So if the first four units listed above are not in $S_r$, then one of the last two is, and this one is a usable unit in $S_q \cap S_r$.
\end{proof}

We will end with a proposition about the case $\gcd(q,n) = 2$, whose strategy is similar to that of the previous proposition.

%\begin{prop}
%\label{fourdividesn}
%Suppose $\gcd(q,n) = 2$, 4 divides $n$, $q > 2$, and $r > \frac{3n}{4}$. Then $S_q \cap S_r$ contains a usable unit.
%\end{prop}
%
%\begin{proof}
%As before, we write $q = 2b, n = 2c$, with $\gcd(b,c)=1$ and $c$ even. Then, letting $d = b^{-1} \in (\BZ/c)^\times$, we have units $d, d + \frac{n}{2} \in S_q$, by the Chinese remainder theorem and the fact that $\frac{n}{2}$ and $n$ share the same prime factors. By the condition $\gcd(q,r,n) = 1$, we have $r$ odd and therefore $(d+ \frac{n}{2})r \equiv dr + \frac{n}{2}$. Then one of $dr + \frac{n}{2}, dr$ must be $< \frac{n}{2}$ mod $n$, and hence one of $d, d + \frac{n}{2}$ is a unit in $S_q \cap S_r$. Moreover, as $1<b<c$, we have $d \not\equiv 1$ mod $\frac{n}{2}$, so neither of these is unusable.
%\end{proof}

\begin{prop}
\label{notfourdividesn}
If $\gcd(q,n) = 2$, 4 does not divide $n$, and there is some $m \in \N$ such that $q > 2^m$ and $r \ge \frac{3n}{4} + \frac{n}{2^{m+2}}$, then $S_q \cap S_r$ contains a usable unit.
\end{prop}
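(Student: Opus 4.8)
The plan is to imitate the strategy of Lemma \ref{gcdfour}: produce an explicit chain of units of $S_q$ indexed by powers of $2$, and show that multiplication by $r$ must carry one of them into the target zone $[0,2r-n)$ of $S_r$. First I would fix notation. Since $\gcd(q,n)=2$ and $4\nmid n$, write $n=2c$ with $c$ odd and $q=2b$ with $\gcd(b,c)=1$; as $\gcd(p,q,r)=1$ while $2\mid q,n$, both $p$ and $r$ are odd. Because $4\nmid n$, the element $\frac n2+1=c+1$ is even and hence not a unit, so the only unusable unit is $1$. Since $[1\cdot r]_n=r\ge 2r-n=[2r]_n$ shows $1\notin S_r$, it suffices to exhibit a \emph{single} unit in $S_q\cap S_r$, which is then automatically usable.

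Next I would build the candidates. Let $d=b^{-1}\in(\Z/c)^\times$, so by Remark \ref{spsq} one has $S_q=\{kd+lc: 0\le k<2b,\ 0\le l<2\}$. For each $j$ with $0\le j\le m$ the index $k=2^j$ is admissible, since $2^j\le 2^m<q=2b$. As $c$ is odd, exactly one of the two representatives $2^jd\bmod c$ and $(2^jd\bmod c)+c$ is odd; being coprime to $c$ (as $c$ is odd and $d$ is a unit mod $c$) and to $2$, this odd representative is a unit $u_j\in S_q$.

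I would then track the image $v_j:=[u_jr]_n$ as $j$ increases. The two elements of $\Z/n$ congruent to $2^{j+1}d$ mod $c$ are $2u_j$ and $u_{j+1}$, and they have opposite parity, so $u_{j+1}\equiv 2u_j+c\pmod n$; combined with $cr\equiv c\pmod n$ (valid since $r$ is odd), this yields the recursion $v_{j+1}=[2v_j+c]_n$. Substituting $s_j:=[v_j-c]_n$ linearizes it to the doubling map $s_{j+1}=[2s_j]_n$.

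Finally comes the main point. A short case check shows that $u_j\in S_r$, i.e. $v_j\in[0,2r-n)$, is equivalent to $s_j\notin B$, where $B=[\,2r-\tfrac{3n}2,\ c\,)$ is an interval contained in $[0,\tfrac n2)$ whose left endpoint satisfies $2r-\tfrac{3n}2\ge\tfrac{n}{2^{m+1}}$; this is precisely where the hypothesis $r\ge\tfrac{3n}4+\tfrac{n}{2^{m+2}}$ enters, pinning the bad interval. Suppose for contradiction that $s_0,\dots,s_m$ all lie in $B$. Then each is $<\tfrac n2$, so no doubling step wraps around and $s_j=2^js_0$ for $0\le j\le m$; in particular $s_m=2^ms_0<\tfrac n2$ forces $s_0<\tfrac{n}{2^{m+1}}$, contradicting $s_0\in B$. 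Hence some $s_j\notin B$, and the corresponding $u_j$ is a usable unit in $S_q\cap S_r$. I expect the main obstacle to be the parity and representative bookkeeping that produces the recursion $v_{j+1}=[2v_j+c]_n$ (choosing the odd representative correctly, verifying $u_{j+1}\equiv 2u_j+c$ and $cr\equiv c$); once the doubling map is isolated, the interval argument closes exactly on the stated bound.
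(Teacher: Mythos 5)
Your proof is correct and takes essentially the same approach as the paper: your units $u_j$ coincide with the paper's candidates $y,\,2y+\tfrac{n}{2},\,4y+\tfrac{n}{2},\dots,2^my+\tfrac{n}{2}$, and your doubling-map/pigeonhole argument on $s_j=[v_j-c]_n$ is a clean repackaging of the paper's successive interval-narrowing $yr\in(\tfrac{n}{2},\tfrac{(2^l+1)n}{2^{l+1}})$, with the hypothesis on $r$ entering at the same point.
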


\begin{proof}
As before, we write $q=2b, n = 2c$, with $\gcd(b,c) =1$ and $c$ odd. Let $d = b^{-1} \in \BZ/c$ and $y := d$ if $d$ is odd and $y := d + \frac{n}{2}$ otherwise. As $c$ is odd, $2^ky +\frac{n}{2}$ is a unit mod $n$ ($k \in \N$), and is in $S_q$ if $q > 2^k$.

First of all, if $yr \le \frac{n}{2}$, then $y \in S_q \cap S_r$ is the needed unit. (There is no issue of usability here, as $\frac{n}{2}+1$ is not a unit, and $1 \notin S_r$.) If this fails, we see if $(2y + \frac{n}{2})r \le \frac{n}{2}$. If these both fail, we have that $yr > \frac{n}{2}$, and as $\frac{n}{2}r \equiv \frac{n}{2}$, we have $2yr < \frac{n}{2}$; combining these two, we must have $yr \in (\frac{n}{2}, \frac{3n}{4})$. We continue to $4y + \frac{n}{2}$; if this is not in $S_r$, we have $4yr < \frac{n}{2}$, and combined with the previous conditions, we get $yr \in (\frac{n}{2}, \frac{5n}{8})$, etc. So continuing this process to $2^m (<q)$, we get that either there is some $l \le m$ with $(2^ly + \frac{n}{2})r \le \frac{n}{2}$ and so $2^ly + \frac{n}{2}$ is a usable unit in $S_q \cap S_r$, or $yr \in (\frac{n}{2}, \frac{(2^m+1)n}{2^{m+1}})$. Then if $r \ge \frac{3n}{4} + \frac{n}{2^{m+2}}$, $yr < 2r-n$ and so $y$ is a usable unit in $S_q \cap S_r$.
\end{proof}

\section{The case $q \le \frac{\sqrt{n}}{2}$}
In the first part of this section, we prove that $S_p \cap (S_q \cup S_r)$ contains a usable unit if $S_p$ does (in the case $q \le \frac{\sqrt{n}}{2}$). We recall from Proposition \ref{usableunits} that $S_p$ almost always contains a usable unit; however, there can be problems when $p = 1,2,4$, and these are dealt with in the second part of this section.

\begin{lem}
\label{qsmall}
If $S_p$ contains a usable unit and $q \le \frac{\sqrt{n}}{2}$, $S_p \cap S_r$ or $S_p \cap S_q$ contains a usable unit.
\end{lem}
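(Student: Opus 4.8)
The plan is to find, inside $S_p$, a \emph{usable} unit whose image also lands in $S_q$ or $S_r$, and to produce such a unit explicitly from a short list of multiples of a single base unit. First I would dispose of two easy cases using the previous section. If $\gcd(p,q)>1$, then Corollary \ref{factor} already exhibits a usable unit in $S_p\cap S_q$; and if $\gcd(p,n)$ has an odd prime factor $l$, then the $p$-version of Lemma \ref{gcdqnnottwo} gives a usable unit in $S_l\cap S_r$, which lies in $S_p\cap S_r$ since $S_l\subset S_p$ (Corollary \ref{factor}). So I may assume $\gcd(p,q)=1$ (hence $p<q$, as the hypothesis forces $p\ge 2$, $S_1$ having no usable unit) and that $\gcd(p,n)$ is a power of $2$.

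Next I would reduce the target to a one-variable window condition. Using $[2p]_n=2p$, $[2q]_n=2q$, $[2r]_n=n-2p-2q$ together with $ar\equiv-a(p+q)$, one checks that for a unit $a$ with $[ap]_n=x<2p$ one has $a\in S_q\cup S_r$ exactly when $w:=[aq]_n$ avoids the two windows $[2q,\,2p+2q-x]$ and $[n-x,\,n)$, whose combined length is about $2p$. Since $q\le\frac{\sqrt n}{2}$, i.e. $n\ge 4q^2$, these windows have total length $<4p\le 4q\le 2\sqrt n$, which is tiny next to $n$; this is exactly where the hypothesis on $q$ is used.

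Now I would produce the unit. In the main case $\gcd(p,n)=1$, every $a=kp^{-1}$ with $1\le k<2p$ lies in $S_p$ (since $[ap]_n=k$), and $w=[kt]_n$ where $t:=[p^{-1}q]_n$. If $k=1$ already avoids both windows, then $p^{-1}$ is the desired unit (its usability, and that of each candidate below, is treated in the last paragraph). Otherwise one shows that necessarily $t\in[2q,\,2p+2q)$, and then a ``doubling'' step finishes: taking the smallest $k\ge 2$ coprime to $n$, which is $O(\log n)$ since an integer divisible by all primes up to $x$ is at least the primorial, one gets $kt\ge 2\cdot 2q>2p+2q$ while $kt\le n-k$ (as $kt\le O(\sqrt n\log n)<n$ for large $n$), so $kp^{-1}\in S_r$. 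The case $\gcd(p,n)=2^j$ runs identically, with $p^{-1}$ replaced by a base unit $a_0$ satisfying $[a_0p]_n=2^j$, constructed as in Lemma \ref{poweroftwo}, and small coprime multiples $ka_0$.

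The main obstacle is the bookkeeping of \emph{usability} and coprimality of the chosen multiplier. The two unusable units $1$ and $\frac n2+1$ each rule out at most one value of $k$, and the $k$ sharing a factor with $n$ must be avoided. The delicate point is that when $p\in\{2,4\}$ the assumption that $S_p$ contains a usable unit already forces $n$ odd by Proposition \ref{usableunits}, which is precisely what guarantees that a small coprime multiplier $k$ (e.g.\ $k=2$ or $3$) exists and lands in $S_r$. Handling the power-of-$2$ gcd case alongside the $2a\not\equiv 2$ constraint is the fiddliest part, and a finite range of small $n$ must be checked directly.
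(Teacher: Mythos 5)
Your reduction to the two ``forbidden windows'' for $w=[aq]_n$ is correct, and the opening reductions (via Corollary \ref{factor} and the $p$-version of Lemma \ref{gcdqnnottwo} --- note the latter quietly imports the hypothesis $r>\frac{2n}{3}$, which the lemma as stated does not assume and the paper's proof does not use) are fine. The genuine gap is in the ``doubling step.'' Your multiplier $k$ must simultaneously satisfy $k<2p$ (else $kp^{-1}\notin S_p$), $\gcd(k,n)=1$, and $k\neq p$ (else $kp^{-1}=1$ is unusable), and for small $p$ the interval $[2,2p)$ may simply contain no such $k$: for instance $p=3$ with $10\mid n$ leaves only $k=3$, which produces the unusable unit $1$. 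Your bound ``the smallest $k\ge 2$ coprime to $n$ is $O(\log n)$'' is irrelevant when $2p$ is a fixed small number, and your proposed escape (``a finite range of small $n$ must be checked directly'') does not apply, since the obstruction is small $p$ with $n$ arbitrary; your discussion of usability likewise only addresses $p\in\{2,4\}$. What saves the strategy is something you did not prove: the case you invoke the doubling step for is vacuous. Indeed $t=[p^{-1}q]_n\in[2q,2p+2q)$ would give $n\mid (2p-1)q+ps$ with $0<(2p-1)q+ps\le(2p-1)(p+q)<4q^2\le n$ (using $p<q\le\frac{\sqrt n}{2}$), a contradiction; so $k=1$ always works when $\gcd(p,n)=1$, and a similar verification is needed in your $\gcd(p,n)=2^j$ branch. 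Without that observation the proof is incomplete.

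For comparison, the paper sidesteps all of this by never constructing a unit: it shows that under $\gcd(p,q)=1$ and $q\le\frac{\sqrt n}{2}$, \emph{every} element of $S_p$ other than $0,1$ already lies in $S_r$. The argument is that $a\in S_p\setminus S_r$ forces one of $a,a\pm 2$ into $S_q$, while Observation \ref{origs} and the estimate $|bq-cp|<\frac{4pq}{n}\le 1$ show that the only elements of $S_p$ and $S_q$ within distance $2$ of each other are $0$ and $1$; hence the hypothesized usable unit of $S_p$ (which is neither $0$ nor $1$) is automatically in $S_p\cap S_r$. That route avoids the coprimality and usability bookkeeping entirely, which is exactly where your version breaks down.
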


\begin{proof}
First of all, we can assume $\gcd(p,q) = 1$, as otherwise $S_p \cap S_q$ contains the usable unit $\gcd(p,q)^{-1}$ (see Corollary \ref{factor}).

Suppose we have $a \in S_p \setminus S_r$. Then we have $0 \le [ap]_n < 2p$ and
$$n > [ar]_n \ge [2r]_n = n - 2p - 2q$$
Now, as $p+q+r = n$, $[x]_n < n$, and $[ar]_n > 0$, we have
$$[ap]_n + [aq]_n + [ar]_n = n \textrm{ or }2n$$
In the first case,
$$[ap]_n \ge 0, [ar]_n \ge n - 2p - 2q \implies [aq]_n \le 2p + 2q$$
So either $[aq]_n < 2q$, in which case $a \in S_q$, or $2q \le [aq]_n \le 2p + 2q < 4q$. (We assumed $\gcd(p,q) = 1$, so $p < q$.) Then $a - 2 \in S_q$.

In the second case,
$$[ap]_n < 2p, [ar]_n < n \implies [aq]_n \ge n-2p+2 \implies a+2 \in S_q$$
So we conclude that $a \in S_p \setminus S_r$ implies that one of $a, a \pm 2 \in S_q$.

Recalling the description of $S_p$ and $S_q$ in Observation \ref{origs}, if there are elements of $S_p$ and $S_q$ within distance 2 of each other, then there must be integers $0 \le b < p, 0 \le c < q$ such that
$$\left|  \ceil[\bigg]{\frac{nb}{p}} (+1) - \left( \ceil[\bigg]{\frac{nc}{q}} (+1)\right) \right| \le 2 \implies |bq - cp| < 4\frac{pq}{n}$$
If $q \le \frac{\sqrt{n}}{2}$, then $4\frac{pq}{n} \le 1$, so as $b,c,p,q$ are integers, the only way this can happen is if $bq = cp$, and as we assumed $\gcd(p,q) = 1$, this only can happen for $b=c=0$. So the only elements of $S_p$ and $S_q$ within distance 2 of each other are $0,1 (\in S_p \cap S_q)$. Then by the previous paragraph, all elements of $S_p$ other than $\{0,1\}$ must be in $S_r$. So if $S_p$ contains a usable unit, this usable unit is in $S_p \cap S_r$.
\end{proof}

\begin{prop}
\label{qsmallpbad}
If $S_p$ does not contain a usable unit and $q \le \frac{\sqrt{n}}{2}$ and $r > \frac{2n}{3}$ and $n \ge 30$, then either $(p,q,r)$ belongs to one of families 1, 2, 3 in Theorem \ref{maintwo} (in which case the \cite{MW} criterion is not satisfied) or $S_q \cap S_r$ contains a unit.
\end{prop}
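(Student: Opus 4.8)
The plan is to combine the mechanism behind Lemma \ref{qsmall} with the near-symmetry between $S_q$ and $S_r$. First I would record the consequences of the hypothesis: since $S_p$ contains no usable unit, Proposition \ref{usableunits} forces $p \in \{1,2,4\}$ (with $p=2$ requiring $n$ even and $p=4$ requiring $n \equiv 4 \pmod 8$). In every case $\gcd(p,q)=1$; this is automatic for $p=1$, while for $p=2,4$ the identity $\gcd(p,q,n)=\gcd(p,q,r)=1$ together with $n$ even forces $q$ to be odd. I would also note the elementary facts $0\in S_r$ and $1\notin S_r$, and rewrite the complement of $S_r$ in the convenient form
\[
S_r^c=\{a : 0<[a(p+q)]_n\le 2(p+q)\},
\]
which follows from $-r\equiv p+q \pmod n$ and $[2r]_n=n-2(p+q)$, and is valid regardless of $\gcd(r,n)$.

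The heart of the argument is the containment $S_q\setminus\{0,1\}\subseteq S_r$, the analog for $S_q$ of what Lemma \ref{qsmall} proves for $S_p$. To establish it, suppose $a\in S_q\cap S_r^c$ and set $u=[aq]_n\in[0,2q)$ and $v=[a(p+q)]_n\in(0,2(p+q)]$. From $u\equiv aq$ and $v\equiv a(p+q)\pmod n$ one gets the single relation
\[
q\,v\equiv (p+q)\,u \pmod n.
\]
Since $p\le q$ and $q\le\tfrac{\sqrt n}{2}$, we have $2q(p+q)\le 4q^2\le n$, and for $n\ge 30$ this is strict (equality throughout would force $p=q=1$, i.e.\ $n=4$); hence $qv-(p+q)u$ lies strictly between $-n$ and $n$ and must vanish. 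Then $qv=(p+q)u$ with $\gcd(q,p+q)=\gcd(q,p)=1$ forces $q\mid u$, so $u\in\{0,q\}$. The value $u=0$ would give $v=0$, contradicting $v>0$, so $u=q$ and $v=p+q$. These read $(a-1)q\equiv 0$ and $(a-1)(p+q)\equiv 0\pmod n$, whence $(a-1)p\equiv 0$, and coprimality of $p$ and $q$ yields $a\equiv 1$. Thus $S_q\cap S_r^c\subseteq\{1\}$, as claimed.

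With the containment in hand the conclusion reduces to a short check against Proposition \ref{usableunits}. If $S_q$ contains a usable unit, that unit is distinct from $0$ and $1$, hence lies in $S_q\setminus\{0,1\}\subseteq S_r$, producing the required unit in $S_q\cap S_r$. By Proposition \ref{usableunits}, $S_q$ fails to contain a usable unit only when $q\in\{1,2,4\}$ (with $q=2$ requiring $n$ even, $q=4$ requiring $n\equiv 4\pmod 8$). Using $p\le q$, $\gcd(p,q)=1$, and $p\in\{1,2,4\}$, each residual case collapses to $p=1$: one checks that $q=1$ gives family (1); $q=2$ with $n$ even gives family (2); and $q=4$ with $n\equiv 4\pmod 8$ gives family (3), since $n=r+5$ makes $n\equiv 4\pmod 8$ equivalent to $r\equiv 7\pmod 8$. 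In particular, when $p=2$ or $p=4$ the coprimality already forces $q$ odd and $\ge 3$, so $S_q$ always has a usable unit and no exceptional family occurs.

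The main obstacle is the key containment $S_q\setminus\{0,1\}\subseteq S_r$. A naive adaptation of the $a,\,a\pm2$ shifting argument of Lemma \ref{qsmall} is not symmetric in $p$ and $q$: from the bound $[ap]_n\le 2p+2q$ one would need to shift $a$ by up to $\sim q/p$ steps to land in $S_p$, which the distance estimate $|bq-cp|<4pq/n$ cannot absorb. The proof therefore rests instead on the exact congruence $qv=(p+q)u$ together with $\gcd(p,q)=1$, which handles all values of $\gcd(q,n)$ uniformly. The only remaining delicacy is book-keeping at the boundary $2q(p+q)=n$ and in matching the residual cases $q\in\{1,2,4\}$ to families (1)--(3); the hypothesis $n\ge 30$ is exactly what removes the small-$n$ coincidences (notably $n=4$) where these estimates would be tight.
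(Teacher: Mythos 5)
Your proof is correct, but it takes a genuinely different route from the paper's. The paper handles this proposition by splitting on $\gcd(q,n)$: an explicit computation showing $q^{-1}\in S_q\cap S_r$ when $\gcd(q,n)=1$ (writing $[q^{-1}]_n=\frac{kn}{p}+m$ and bounding $m>\frac{n}{pq}$), then invoking Lemmas \ref{gcdqnnottwo}, \ref{poweroftwo} and Proposition \ref{notfourdividesn} for the various subcases $\gcd(q,n)=2,4,8,\ge 16$, or divisible by an odd prime. You instead prove the single uniform containment $S_q\setminus\{0,1\}\subseteq S_r$, valid for every value of $\gcd(q,n)$, via the exact relation $qv=(p+q)u$ forced by the bound $2q(p+q)<n$; I checked the boundary analysis (the exclusion of $qv-(p+q)u=\pm n$ via $n\ge 30$) and the deduction $\gcd(p,q)=1\Rightarrow a\equiv 1$, and both are sound, as is your reduction of the residual cases $q\in\{1,2,4\}$ to families (1)--(3). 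This is the precise analogue for $S_q$ of what Lemma \ref{qsmall} establishes for $S_p$, and your observation that the proximity argument of that lemma does not transfer (being asymmetric in $p$ and $q$) is accurate --- the congruence argument is the right substitute. What your approach buys is a substantially shorter and more conceptual proof of this particular proposition that bypasses the machinery of \S 5 entirely (and does not even use $r>\frac{2n}{3}$); what it does not buy is a shortening of the paper overall, since Lemmas \ref{gcdqnnottwo}, \ref{poweroftwo}, \ref{gcdfour} and Proposition \ref{notfourdividesn} are still needed in \S\S 6--7 for the regime $q>\frac{\sqrt{n}}{2}$, where the inequality $2q(p+q)<n$ fails and your containment is unavailable. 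One cosmetic point: you deliver a \emph{usable} unit in $S_q\cap S_r$, which is stronger than the stated conclusion and is what the application actually requires, matching what the paper's own proof produces.
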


\begin{proof}
We recall from Proposition \ref{usableunits} that $S_p$ does not contain usable units exactly when: $p = 1$; $p = 2$ and $n$ is even; or $p = 4$ and $n \equiv 4$ mod 8.

We will start by considering the case $\gcd(q,n) = 1$ and $q > 1$. %In this case,
%$$q^{-1}r \equiv q^{-1}(-p-q) \equiv - pq^{-1} - 1$$
We write $[q^{-1}]_n = \frac{kn}{p} + m$ for some $0 \le k < p$ and $0 \le m < \frac{n}{p}$. (In all of these cases, note that $p$ divides $n$.) Letting $l = [kq]_p$, we have
 $\frac{kn}{p}q \equiv \frac{ln}{p}$ mod $n$, and we claim that $m > \frac{n}{pq}$, as otherwise $mq < \frac{n}{p}$ (equality not being possible since $\gcd(q,n) = 1$) and so, if $l > 0$,
$$n \ge \frac{(l+1)n}{p} > \frac{ln}{p} + mq > \frac{ln}{p} > 1 \implies \left[\left(\frac{kn}{p}+m\right)q\right]_n\ne 1$$
and if $l = 0$, then $[mq]_n = mq > 1$.

Now,
$$q^{-1}r \equiv q^{-1}(-p-q) \equiv -p\left(\frac{kn}{p} + m\right) - 1 \equiv -mp - 1$$
We claim $[q^{-1}r]_n = n -mp - 1$: first of all, $m < \frac{n}{p}$, so $mp < n$, and if $p > 1$, then $mp + 1 < n$ also, as $n$ is also a multiple of $p$. On the other hand, if $p = 1$, then $m + 1 < n$ unless $q^{-1} = m = n-1$, but this would imply $q = n-1 > \frac{n}{2}$. Now, using that $q \le \frac{\sqrt{n}}{2}$,
$$mp + 1 > \frac{n}{q} + 1 \ge 4q +1 > 2p + 2q \implies [q^{-1}r]_n <  [2r]_n,$$
so $q^{-1} \in S_q \cap S_r$ is a usable unit.

At this point, we just need to put together what we have already proved. If $p = 2,4$, then the above gives a proof for $\gcd(q,n) = 1$ (automatically $q > 1$), and as $\gcd(p,q,n) = 1$, the only other option is that $\gcd(q,n) > 1$ has some prime factor other than 2, in which case Lemma \ref{gcdqnnottwo} applies. So the only case to consider is $p = 1$. If $\gcd(q,n)= 1$ and $q > 1$, then we use the above; if $q = 1$, then this is family 1 in Theorem \ref{maintwo}. If $\gcd(q,n) > 1$ is not a power of two, then Lemma \ref{gcdqnnottwo} applies, and if $\gcd(q,n)$ is a power of two $\ge 16$, then Lemma \ref{poweroftwo} applies. So it remains to consider the cases $\gcd(q,n) = 2,4,8$ (and $p = 1$).
\begin{itemize}
\item $\gcd(q,n) = 2$:
\begin{itemize}
\item $q = 2$: This is family 2 in Theorem \ref{maintwo}.
\item $q > 2$ and 4 divides $n$: $r \ge n - \frac{\sqrt{n}}{2} -1 > \frac{3n}{4}$ for $n \ge 11$, so by Lemma \ref{poweroftwo}, $S_q \cap S_r$ contains a usable unit.
\item $q > 2$ and 4 does not divide $n$: $r \ge n - \frac{\sqrt{n}}{2} - 1 > \frac{7n}{8}$ for $n \ge 30$, so by Proposition \ref{notfourdividesn} (applied with $m = 1$), $S_q \cap S_r$ contains a usable unit.
\end{itemize}
\item $\gcd(q,n) = 4$:
\begin{itemize}
\item $q = 4$ and 8 does not divide $n$: This is family 3 in Theorem \ref{maintwo}.
\item $q = 4$ and 8 does divide $n$: One of $\frac{n}{4}+1, \frac{3n}{4}+1$ is a usable unit in $S_q \cap S_r$ if $r \ge \frac{3n}{4}$ (which is true, by the above, for $n \ge 11$). 
\item $q > 4$: $r > \frac{3n}{4}$ for $n \ge 11$, so by Lemma \ref{poweroftwo}, $S_q \cap S_r$ contains a usable unit.
\end{itemize}
\item $\gcd(q,n) = 8$:
\begin{itemize}
\item $q = 8$: One of $\frac{n}{4}+1, \frac{3n}{4}+1$ is a usable unit in $S_q \cap S_r$ if $r \ge \frac{3n}{4}$ (in particular, for $n \ge 11$).
\item $q > 8$: By Lemma \ref{poweroftwo}, $S_q \cap S_r$ contains a usable unit.
\end{itemize}
\end{itemize}
\end{proof}
At this point, it might be worth noting in families 1--3 in Theorem \ref{maintwo}, $S_p$ and $S_q$ both do not contain usable units, by Proposition \ref{usableunits}, so that the \cite{MW} criterion is definitely not satisfied. (Indeed, families 1 and 2 are the known families of lattice triangles.)

\section{The case $q > \frac{\sqrt{n}}{2}$, part 1}
The main technique in this case is a rational approximation argument, which will be started at the end of this section and finished in \S8. We will begin by eliminating the cases in which this argument cannot be used. (This includes the case $\gcd(q,n) > 2$, where we recall results from \S5, as well as the case $r = 3q + p$, which turns out to be problematic.) The second half of this section introduces the number theoretic background necessary for the argument (in particular, the Jacobsthal function) and deals with the cases in which the rational approximation has denominator $> 3$; when the denominator is $\le 3$, one needs to use a slightly different strategy (depending on fairly fine case distinctions), which will be the topic of \S8.

\begin{lem}
\label{qgcdbig}
Suppose $q > \frac{\sqrt{n}}{2}$, $r > \frac{2n}{3}$, $\gcd(q,n) > 2$, and $n \ge 1024$. Then $S_q \cap S_r$ contains a usable unit.
\end{lem}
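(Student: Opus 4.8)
The hypothesis $\gcd(q,n) > 2$ splits by cases on the prime factorization of $\gcd(q,n)$, and the plan is to reduce every case to one of the lemmas already proven in \S5. First I would check whether $\gcd(q,n)$ has an odd prime factor $l > 2$. If so, then $l \mid q$ and $l \mid n$, and since $r > \frac{2n}{3}$ by hypothesis, Lemma \ref{gcdqnnottwo} applies directly (with $l$) to produce a usable unit in $S_l \cap S_r \subseteq S_q \cap S_r$. (Here I use Corollary \ref{factor}, which gives $S_l \subset S_q$ since $l \mid \gcd(q,n) \mid q$ and $l < \frac{n}{2}$.) So the only remaining case is $\gcd(q,n) = 2^m$ a pure power of two with $m \ge 2$.

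For $\gcd(q,n) = 2^m$, I would next dispose of the case $q > 2^{m+1}$ or whatever threshold the \S5 lemmas require, and then handle the small-$q$ residue separately. The key point is that $q > \frac{\sqrt{n}}{2}$ together with $n \ge 1024$ forces $q > 16$: indeed $\frac{\sqrt{n}}{2} \ge \frac{32}{2} = 16$. This is exactly the bound needed to invoke the \S5 results. Concretely: if $m \ge 3$ (so $\gcd(q,n) = 2^m \ge 8$), then $q > 16 > 8$ and $r > \frac{2n}{3}$, so Lemma \ref{poweroftwo} (first assertion) gives a usable unit in $S_q \cap S_r$. If $m = 2$ (so $\gcd(q,n) = 4$), then $q > 16$, and I would split according to whether $8 \mid n$: if $8 \mid n$ or $r \ge \frac{3n}{4}$, Lemma \ref{poweroftwo} (second assertion) applies; and if $8 \nmid n$ and $\frac{2n}{3} < r \le \frac{3n}{4}$, then since $q > 16$, Lemma \ref{gcdfour} applies. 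In every subcase one of the \S5 lemmas produces the desired usable unit.

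The main thing to verify carefully — and the only real content beyond bookkeeping — is that the bound $n \ge 1024$ genuinely delivers the size hypotheses $q > 8$ (for Lemma \ref{poweroftwo}) and $q > 16$ (for Lemma \ref{gcdfour}) via the standing assumption $q > \frac{\sqrt{n}}{2}$. I expect this to be the part requiring attention: one must confirm $\frac{\sqrt{n}}{2} \ge 16$ for $n \ge 1024$ so that $q > \frac{\sqrt{n}}{2}$ forces $q > 16$, covering the worst threshold among the invoked lemmas. The remaining work is purely a matter of checking that the $2n/3$, $3n/4$, and divisibility-by-$8$ hypotheses of the \S5 lemmas are exhausted by the case split, with no gaps. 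Since $r > \frac{2n}{3}$ is assumed throughout and the lemmas of \S5 partition the possibilities for $\gcd(q,n) \in \{4\} \cup \{2^m : m \ge 3\}$ precisely along the lines of whether $8 \mid n$ and whether $r \ge \frac{3n}{4}$, the case analysis closes. I do not anticipate any obstacle requiring a genuinely new argument; the lemma is a clean corollary of \S5 once the numerical threshold $n \ge 1024$ is seen to force $q$ large enough.
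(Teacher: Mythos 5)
Your proof is correct and follows essentially the same route as the paper's: split on whether $\gcd(q,n)$ has an odd prime factor (Lemma \ref{gcdqnnottwo}), is a power of two $\ge 8$ (Lemma \ref{poweroftwo}), or equals $4$ (Lemma \ref{poweroftwo} or \ref{gcdfour}), with $n \ge 1024$ supplying $q > \frac{\sqrt{n}}{2} \ge 16$ to meet the size hypotheses. No substantive difference from the paper's argument.
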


\begin{proof}
If $\gcd(q,n)$ is not a power of two, then this follows by Lemma \ref{gcdqnnottwo}. If $\gcd(q,n)$ is a power of two $\ge 8$ and $n \ge 256$, then $q > 8$, so this follows by Lemma \ref{poweroftwo}. If $\gcd(q,n) = 4$, then $n \ge 1024$ implies $q > 16$, so one of Lemma \ref{poweroftwo} or Lemma \ref{gcdfour} applies.
\end{proof}

\begin{prop}
\label{threeq}
If $r = 3q + p$, then the \cite{MW} criterion is applicable iff $p \ne 1,2,4$.
\end{prop}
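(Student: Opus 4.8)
The plan is to reduce everything to two clean observations about the family $r = 3q+p$. First I would record the basic arithmetic of this case: here $n = p+q+r = 2p+4q$ is even, and since $r = 3q+p$ forces $\gcd(p,q,r) = \gcd(p,q)$, the standard-form condition $\gcd(p,q,r)=1$ gives $\gcd(p,q)=1$. The relations I will lean on are $\frac{n}{2} = p+2q$ (so that $p = \frac{n}{2} - 2q$ and $r \equiv q + \frac{n}{2}$) and $[2r]_n = n - 2p - 2q = 2q$. Because $n$ is even, every unit of $\Z/n$ is odd, so it suffices throughout to analyze odd $a$. Recall also that the \cite{MW} criterion is applicable exactly when one of $S_p \cap S_q$, $S_p \cap S_r$, $S_q \cap S_r$ contains a usable unit.

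For the ``if'' direction (applicable when $p \ne 1,2,4$), the key step is an identity pinning down where $S_p$ sends its elements under multiplication by $2q$. For odd $a$ we have $a \cdot \frac{n}{2} \equiv \frac{n}{2} \pmod n$, so $ap \equiv \frac{n}{2} - 2aq \pmod n$, whence $[2aq]_n = \big[\frac{n}{2} - [ap]_n\big]_n$. If moreover $a \in S_p$, i.e. $[ap]_n < 2p$, then since $2p < \frac{n}{2}$ the bracket on the right is literally $\frac{n}{2} - [ap]_n$, which lies in $(2q-p,\, \frac{n}{2}] \subset [0, 4q)$ (using $\frac{n}{2} = p+2q < 4q$). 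A one-line check shows $[2aq]_n \in [0,4q)$ is equivalent to $[aq]_n \in [0,2q) \cup [\frac{n}{2}, \frac{n}{2}+2q)$, that is, to $a \in S_q \cup S_r$. Thus every odd unit of $S_p$ — in particular every usable unit — automatically lies in $S_q \cup S_r$, hence in $S_p \cap S_q$ or $S_p \cap S_r$. Since Proposition \ref{usableunits} guarantees that $S_p$ contains a usable unit whenever $p \notin \{1,2,4\}$ (none of the exclusions there apply, as $n$ is even but $p \ne 2,4$), the criterion is applicable in this case.

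For the ``only if'' direction I would show that when $p \in \{1,2,4\}$ none of the three intersections contains a usable unit. The intersection $S_q \cap S_r$ is handled directly: for an odd unit $a$, the facts $[2r]_n = 2q$ and $r \equiv q + \frac{n}{2}$ give $a \in S_r \iff [aq]_n \in [\frac{n}{2}, \frac{n}{2}+2q)$, while $a \in S_q \iff [aq]_n \in [0,2q)$; these ranges are disjoint, so $S_q \cap S_r$ has no usable unit. For the other two intersections it is enough to see that $S_p$ itself has no usable unit, which is exactly Proposition \ref{usableunits}: $p=1$ is always excluded, $p=2$ is excluded since $n$ is even, and for $p=4$ the condition $\gcd(p,q)=1$ forces $q$ odd, hence $n = 8+4q \equiv 4 \pmod 8$, the excluded case. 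Therefore no intersection contains a usable unit and the criterion fails, matching families (4), (5), (6) of Theorem \ref{maintwo}.

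The heart of the argument — and the step I expect to be the only genuine obstacle — is spotting the identity $[2aq]_n = \frac{n}{2} - [ap]_n$ for odd $a \in S_p$; once it is in hand, both directions follow immediately from Proposition \ref{usableunits}, with no rational-approximation or Jacobsthal-type input of the kind needed elsewhere in this section. The remaining care is purely bookkeeping on the boundary inequalities $2p < \frac{n}{2}$, $\frac{n}{2} < 4q$, and $2q - p \ge 0$, all of which follow from $p \le q$.
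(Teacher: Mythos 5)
Your proof is correct, but it takes a genuinely different and rather more uniform route than the paper's. The paper's argument for the ``if'' direction constructs one explicit candidate unit $d$ (essentially $p^{-1}$ adjusted modulo $c = n/\gcd(p,n)$), derives $[dq]_n = \frac{kn}{4} - \frac{a}{2}$ from $4dq \equiv -2a$, and then rules out $k=2,4$ by a case analysis on $a = \gcd(p,n) \in \{1,2,4\}$; you instead prove the stronger statement that \emph{every} odd unit of $S_p$ lies in $S_q \cup S_r$, via the identity $2aq \equiv \frac{n}{2} - ap$ for odd $a$ (which is the same congruence $4dq \equiv -2dp$ the paper uses, but exploited as a containment of target windows rather than evaluated at one point). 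For the ``only if'' direction the paper again splits into $\gcd(q,n)=1$ and $\gcd(q,n)=2$, computes $q^{-1}r = \frac{n}{2}+1$ (resp.\ $dr \equiv \frac{n}{2}+2$), and concludes $S_q \cap S_r$ consists of even numbers; your observation that $ar \equiv aq + \frac{n}{2}$ for odd $a$ makes the $S_q$- and $S_r$-windows $[0,2q)$ and $[\frac{n}{2}, \frac{n}{2}+2q)$ for $[aq]_n$ visibly disjoint, which reaches the same conclusion (no odd elements, hence no units, in $S_q \cap S_r$) without the gcd case split. Both proofs lean identically on Proposition \ref{usableunits} to handle $S_p$ for $p \in \{1,2,4\}$, and your boundary checks ($2p < \frac{n}{2}$, $\frac{n}{2} < 4q$, $2q - p \ge 0$, and $n \equiv 4 \bmod 8$ when $p=4$ since $q$ is odd) are all correct. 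What your approach buys is the elimination of essentially all case analysis; what the paper's buys is an explicit unit witnessing applicability, which is marginally more constructive but not needed for the statement.
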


\begin{proof}
First of all, as $\gcd(p,q,r) = 1$, we must have $\gcd(p,q) = 1$, and then since $n = 4q + 2p$, we must have $\gcd(p,n) = 1,2,4$.

We write $p = ab, n = ac$, with $\gcd(b,c) = 1$. Let $d = b^{-1} \in (\Z/c)^\times$ if $b^{-1}$ is odd, and $d = b^{-1} + c$ otherwise. Then $d \in (\Z/n)^\times$ is a usable unit if $b \ne 1$ (i.e., $p \ne a$), and
$$0 \equiv dn = d(4q+2p) \implies 4dq \equiv -2a$$
Then $[dq]_n = \frac{kn}{4} - \frac{a}{2}$ for some $k = 1,2,3,4$, and $dr \equiv \frac{3kn}{4} - \frac{a}{2}$ mod $n$. If $k = 1$, $dq < \frac{n}{4} < 2q$, so $d \in S_p \cap S_q$. If $k = 3$, $dr = \frac{n}{4} - \frac{a}{2} <  2q = [2r]_n$, so $d \in S_p \cap S_r$. We claim that $k \ne 2,4$: 
if $a = 1$, then as $n$ is even, $k = 2,4$ would imply $dq$ has nonzero fractional part; 
if $a = 2$, then $k = 2,4$ would imply $dq \equiv -1$ mod $\frac{n}{2}$, which, given that $q < \frac{n}{2}$, implies $q = \frac{n}{2} - \frac{p}{2}$, which is impossible since $q = \frac{n}{4} - \frac{p}{2}$; 
and if $a = 4$, $k = 2,4$ would imply $dq$ is even, but $d,q$ are odd.

So we have shown that if $p \ne a = 1,2,4$, then there is a usable unit in $S_p \cap S_q$ or $S_p \cap S_r$. It remains to see what happens if $p = 1,2,4$. First of all, as $n = 4q+2p$ (with $q$ odd if $p$ is even), these are exactly the cases where $S_p$ does not contain a usable unit, by Proposition \ref{usableunits}. So one it suffices to check that $S_q \cap S_r$ does not contain a usable unit. And as $n = 4q + 2p$ and $\gcd(p,q) = 1$, we must have $\gcd(q,n) = 1,2$.

If $\gcd(q,n) = 1$, then
$$0 \equiv q^{-1}(4q+2p) = 4 + 2q^{-1}p \implies q^{-1}p \equiv -2 \mod \frac{n}{2}$$
In particular, $q^{-1}r \equiv 3 + q^{-1}p \equiv 1$ mod $\frac{n}{2}$, and as $q \ne r$, this must mean $q^{-1}r = \frac{n}{2}+1$, so for $k < \frac{n}{2}$, we have
$$[kq^{-1}r]_n = \begin{cases} k & k \textrm{ even } \\ \frac{n}{2}+k & k \textrm{ odd} \end{cases}$$
and as $[2r]_n = 2q < 2q + p = \frac{n}{2}$ and $S_q = \{kq^{-1}: 0 \le k < 2q\}$, $S_q \cap S_r$ consists entirely of even numbers, i.e., does not contain a unit. 

Similarly, if $\gcd(q,n) = 2$, then writing $q = 2b, n = 2c, d = b^{-1} \in (\BZ/c)^\times$ if $b^{-1}$ is odd and $b^{-1} + \frac{n}{2}$ otherwise, we have
$$S_q = \{ kd + \frac{ln}{2}: 0 \le k < q, 0 \le l < 2\}$$
and
$$dr = d\left(\frac{n}{2} + q\right) \equiv \frac{n}{2} + 2$$
(as $d$ is odd). Then, as $r$ also is odd, for $k < q < \frac{n}{4}$,
$$[(kd + \frac{ln}{2})r]_n = \begin{cases} 2k & k + l \equiv 0 \mod 2 \\ 2k + \frac{n}{2} & k + l \equiv 1 \mod 2 \end{cases}$$
so $[2r]_n = 2q < \frac{n}{2}$ implies that $S_q \cap S_r$ can only have elements $kd + \frac{ln}{2}$ with $k + l$ even. But as $p$ is odd, $\frac{n}{2} = 2q + p$ is odd, and $d$ is odd, so this means $S_q \cap S_r$ consists entirely of even numbers and therefore does not contain a unit.
\end{proof}
So we have established that families 4--6 in Theorem \ref{maintwo} are indeed families for which the \cite{MW} condition is not satisfied. Now, before stating the next lemma, we must introduce a new function:

\begin{defn}
\label{jacobsthal}
The \emph{Jacobsthal function} $j(n)$ is defined to be the smallest integer $m$ such that any sequence of $m$ consecutive integers must contain a number coprime to $n$.
\end{defn}
This function was introduced in \cite{Jacobsthal} and will be our main tool to prove the existence of units in $S_q \cap S_r$. We will need a few facts about $j(n)$ first.

\begin{defn}
We will call an arithmetic progression $\{a + xb: x \in \Z\}$ mod $n$ a ``good" progression if $\gcd(a,b,n) = 1$. (The gcd condition exactly ensures that a sequence of $j(n)$ consecutive terms includes a number coprime to $n$.)
\end{defn}

Our goal will be to find an arithmetic progression of length $j(n)$ in $S_q \cap S_r$, provided that $n$ is sufficiently large. For the purposes of the following lemma, it will suffice to establish a preliminary bound on $j(n)$, though we will need somewhat more in \S9.

\begin{fact}
\label{Kan}
\cite[Satz 4]{Kanold} Let $\omega(n)$ be the number of distinct prime factors of $n$. Then
$$j(n) \le 2^{\omega(n)}$$
\end{fact}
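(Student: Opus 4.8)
The plan is to reduce to the squarefree case and then induct on $\omega(n)$. Since an integer is coprime to $n$ if and only if it is coprime to the radical $p_1\cdots p_k$ (where $p_1 < \cdots < p_k$ are the distinct primes dividing $n$ and $k = \omega(n)$), the value $j(n)$ depends only on this radical, so I may assume $n = p_1\cdots p_k$. The statement to prove then becomes: every block of $2^k$ consecutive integers contains a number coprime to $n$. Dually, the $k$ residue classes $\{x : p_i \mid x\}$ are arithmetic progressions with prime common differences, and the claim is that they cannot cover $2^k$ consecutive integers. I would prove the statement $(\star_k)$, that any $2^k$ consecutive integers contain one coprime to any fixed set of $k$ distinct primes, by induction on $k$. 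The base case $k=1$ is immediate, since among any two consecutive integers at most one is divisible by a given prime $p \ge 2$.

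For the inductive step I would first record two clean reductions. The parity reduction handles the case $2 \mid n$: among $2^k$ consecutive integers exactly $2^{k-1}$ are odd and form an arithmetic progression of difference $2$; writing such an element as $a + 2\ell$ and setting $t \equiv 2^{-1}a$ modulo $m = n/2$, the invertibility of $2$ mod $m$ gives that coprimality to $n$ is equivalent to coprimality to $m$ of $t + \ell$ for $\ell = 0, \dots, 2^{k-1}-1$, i.e. of $2^{k-1}$ consecutive integers; since $m$ has $k-1$ prime factors, $(\star_{k-1})$ finishes this case. The large-prime reduction handles the case where $p_k$ is big: if $p_k \ge 2^{k-1}+1$ (or if $p_k$ has at most one multiple in the block), then the block contains a sub-block of $2^{k-1}$ consecutive integers free of multiples of $p_k$, and applying $(\star_{k-1})$ to $p_1, \dots, p_{k-1}$ there produces an integer coprime to all of $p_1, \dots, p_k$.

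The main obstacle is the remaining regime, where $2 \nmid n$ and every prime factor is small (explicitly $p_k \le 2^{k-1}$, which first occurs at $k = 5$, e.g. $n = 3\cdot 5\cdot 7\cdot 11\cdot 13$); here neither reduction applies, since no single progression is sparse enough to leave a long prime-free gap. My plan for this case is the two-halves argument: split the block into halves of length $2^{k-1}$ and use $(\star_{k-1})$ on each to produce integers $a$ and $b$, one in each half, each coprime to $m = n/p_k$. If either is coprime to $p_k$ it is coprime to $n$ and we are done, so the only failure mode is that every integer of the block coprime to $m$ is divisible by $p_k$, i.e. lies in the single class $0 \bmod p_k$. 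The crux, and the step I expect to be hardest (and where Kanold's argument is genuinely needed), is excluding this failure mode: one must show that a block of length $2^k$ always contains two integers coprime to $m$ lying in distinct residue classes modulo $p_k$. I would attack this by a counting and CRT argument, using that consecutive integers coprime to $m$ are at most $j(m) \le 2^{k-1}$ apart (so the block contains several of them) together with the equidistribution of their residues modulo the coprime modulus $p_k$, to force two of them into different classes mod $p_k$, one of which is then coprime to $n$.

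Finally I would note that the bound is sharp, with the extremal behavior coming precisely from the prime $2$: for $n = 6$ the three consecutive integers $2, 3, 4$ are each non-coprime to $6$, so $j(6) = 4 = 2^{\omega(6)}$, matching the role of the parity reduction as the tight case.
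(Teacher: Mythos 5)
First, note that the paper gives no proof of this statement at all: Fact~\ref{Kan} is imported verbatim from Kanold as \cite[Satz 4]{Kanold}, so there is no in-paper argument to compare yours against, and your attempt has to be judged on its own. Most of it is sound. The reduction to the radical, the base case $k=1$, the parity reduction for $2\mid n$ (multiplying the $2^{k-1}$ odd members of the block by $2^{-1}$ mod $n/2$ to get $2^{k-1}$ consecutive residues), and the large-prime reduction are all correct; for the last one, when $p_k\ge 2^{k-1}+1$ a block of length $2^k$ contains at most two multiples of $p_k$, and either the open run between two such multiples (of length $p_k-1\ge 2^{k-1}$) or the longer of the two runs flanking a single multiple (of length $\ge\lceil(2^k-1)/2\rceil=2^{k-1}$) lies inside the block and is free of multiples of $p_k$. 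You are also right that the residual regime, $n$ odd with $p_k\le 2^{k-1}$, first appears at $k=5$.

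The genuine gap is exactly at the step you flag as the crux, and the device you propose does not close it. The two-halves argument, fed only by the inductive bound $j(m)\le 2^{k-1}$ for $m=n/p_k$, guarantees merely two (perhaps three) integers in the block that are coprime to $m$; two or three points in an interval of length $2^k$ can perfectly well all be divisible by $p_k$ when $p_k\le 2^{k-1}$, since their pairwise differences may equal $p_k$, $2p_k$ or $3p_k$ and still fit inside the block. There is no ``equidistribution modulo $p_k$'' to invoke for so few points. To rule out the failure mode by counting you would need the block to contain strictly more integers coprime to $m$ than it contains multiples of $p_k$, i.e.\ at least $\lfloor 2^k/p_k\rfloor+2$ of them; but a lower bound of that strength on the number of integers coprime to $m$ in a window of length $2^k$ is essentially as hard as the Jacobsthal bound you are trying to prove, and the induction hypothesis yields only one such integer per sub-window of length $2^{k-1}$. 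So the argument as proposed does not establish the Fact; the odd, small-largest-prime case is precisely where the content of Kanold's theorem lies and requires a finer argument than the two-halves split. For the purposes of this paper the statement should simply be cited, as the authors do.
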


\begin{fact}
\label{Rob}
\cite[Th\'eor\`eme 11]{Robin} For $n \ge 3$,
$$\omega(n) \le 1.3841 \frac{\ln n}{\ln \ln n}$$
\end{fact}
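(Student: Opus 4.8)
The quantity $\omega(n)$ is largest, relative to the size of $n$, precisely when $n$ is squarefree and built from the smallest available primes, so my plan is to reduce the inequality to a statement about \emph{primorials} and then invert an effective form of the prime number theorem. Write $p_1 < p_2 < \cdots$ for the primes, $N_k = p_1 p_2 \cdots p_k$ for the $k$-th primorial, and $\theta(x) = \sum_{p \le x} \ln p$ for the first Chebyshev function. If $\omega(n) = k$, then $n$ is divisible by $k$ distinct primes, each at least as large as the corresponding $p_i$, so $n \ge N_k$ and hence $\ln n \ge \ln N_k = \theta(p_k)$. Thus the only binding case is the extremal one, and it suffices to bound $k$ in terms of $\theta(p_k)$.

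Since $x \mapsto x/\ln x$ is increasing for $x > e$, and $\theta(p_k) \ge \theta(p_3) = \ln 30 > e$ once $k \ge 3$, the bound $\ln n \ge \theta(p_k)$ gives $\tfrac{\ln n}{\ln \ln n} \ge \tfrac{\theta(p_k)}{\ln \theta(p_k)}$ whenever $n \ge 16$. Consequently the Fact reduces (for $n \ge 16$, $k \ge 3$) to the purely prime-counting inequality
\begin{equation*}
k \le 1.3841 \, \frac{\theta(p_k)}{\ln \theta(p_k)}, \qquad \text{equivalently} \qquad h(k) := \frac{k \, \ln \theta(p_k)}{\theta(p_k)} \le 1.3841 .
\end{equation*}
The remaining regimes are easy: for $n \le 15$ one checks directly (there $\omega(n) \le 2$), and for $n \ge 16$ with $\omega(n) \le 2$ the right side of the Fact is at least $1.3841 \cdot \tfrac{\ln 16}{\ln\ln 16} > 3$, so the inequality is immediate.

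To control $h(k)$ I would split on the size of $k$. For small-to-moderate $k$ one simply evaluates $h(k) = k\,\ln(\ln N_k)/\ln N_k$; a direct computation shows $h$ increases from $h(3) \approx 1.08$ to a maximum near $k = 9$ (the primorial $2\cdot 3\cdots 23$), where $h(9) \approx 1.384$, and the constant $1.3841$ is exactly this maximal value. For large $k$ I would invoke effective estimates for $\theta$ and $p_k$ (the explicit Chebyshev-type bounds of Rosser and Schoenfeld, of the shape $\theta(x) \ge x\bigl(1 - c/\ln x\bigr)$ together with $p_k \le k(\ln k + \ln\ln k)$ past an explicit threshold). These give $\theta(p_k) \gg k \ln k$ with an explicit constant, so that $\theta(p_k)/\ln\theta(p_k) \sim k$ and $h(k) \to 1$; one then verifies $h(k) < 1.3841$ for all $k$ beyond the computed range. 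Combining the finite check with the tail estimate yields $h(k) \le 1.3841$ for every $k$, and with the reduction this proves the Fact.

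The main obstacle is the quantitative tail estimate. The asymptotic statement $\omega(n) = (1+o(1))\,\ln n/\ln\ln n$ follows at once from the prime number theorem, but extracting the \emph{explicit} constant $1.3841$ requires (i) effective bounds on $\theta(p_k)$ with usable error terms, strong enough that the crossover into the ``$h(k)<1.3841$'' region happens before the finite computation runs out, and (ii) a careful finite verification that the maximum of $h$ genuinely occurs at $k=9$ and is never exceeded later. The monotonicity reduction via $x/\ln x$ also needs $n \ge 16$, so the small cases must be separated out, but those are trivial since there $\omega(n)$ is far below the bound.
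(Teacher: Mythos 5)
The paper offers no proof of this statement: it is imported as a black box from Robin's 1983 paper (Th\'eor\`eme 11 of \cite{Robin}), so there is no internal argument to compare yours against, and for the purposes of this paper the citation \emph{is} the proof. What you have written is essentially a reconstruction of the standard proof of the cited result, and the skeleton is sound. The reduction to primorials via $n \ge N_k$ when $\omega(n) = k$, the appeal to the monotonicity of $u/\ln u$ on $(e,\infty)$ (with $3 \le n \le 15$ and $\omega(n) \le 2$ split off), and the identification of the extremal case are all correct; indeed $h(9) = 9\ln\ln N_9/\ln N_9 = 1.38400\ldots$ for $N_9 = 223092870$, which, rounded up, is exactly where Robin's constant $1.3841$ comes from. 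The one place your outline is genuinely incomplete is the tail estimate: with effective bounds of the shape $\theta(p_k) \ge k(\ln k + \ln\ln k - c)$ and $p_k \le k(\ln k + \ln\ln k)$, the resulting upper bound on $h(k)$ only drops below $1.3841$ for $k$ in the low thirties, so the ``finite verification'' must be carried that far (still a trivial computation) and the choice of explicit Chebyshev-type inequality actually matters for where the crossover lands. Your approach buys a self-contained, checkable proof of the Fact; the paper's approach buys brevity by outsourcing precisely this explicit-constant bookkeeping to Robin.
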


\begin{rem}
\label{jnnbound}
For the purposes of the next lemma, we will establish a bound of the form $j(n) < cn$ for $n > 10000$: Combining Facts \ref{Kan} and \ref{Rob},
$$j(n) \le 2^{\omega(n)} < e^{\ln 2 \times 1.39 \ln n/\ln \ln n} < e^{.97 \ln n/\ln \ln n}$$
The function $f(x) = x^{\frac{.97}{\ln \ln x} - 1}$ has negative derivative for $x > e$, so for $n > 10000$ we have
$$j(n)/n < f(n) < f(10000) < .006$$
\end{rem}

\begin{lem}
\label{rationalapprox}
Suppose $\gcd(q,n) = 1,2$, $q > \frac{\sqrt{n}}{2}$, $r > \frac{2n}{3}$, and $n > 10000$. If $24j(n)^2 < \sqrt{n}$, then one of the intersections $S_p \cap S_q, S_p \cap S_r, S_q \cap S_r$ contains a usable unit unless $p = 1,2,4$ and $r = 3q+p$ (see Proposition \ref{threeq}).
\end{lem}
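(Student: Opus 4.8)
The plan is to realize inside $S_q\cap S_r$ a \emph{good} arithmetic progression (in the sense defined just before Fact~\ref{Kan}) having at least $j(n)$ terms; since any $j(n)$ consecutive terms of a good progression contain a number coprime to $n$, this forces a unit into $S_q\cap S_r$, and a little extra room will let us make it usable. I would treat $\gcd(q,n)=1$ first and then run the identical argument in the $\gcd(q,n)=2$ case, with $c=n/2$ and $d=b^{-1}\in(\Z/c)^\times$ (notation of~\ref{spsq}) playing the roles of $n$ and $q^{-1}$.

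For $\gcd(q,n)=1$, recall from~\ref{spsq} that $S_q=\{kq^{-1}:0\le k<2q\}$, and that $kq^{-1}\in S_r$ exactly when $[ks]_n<W$, where $s:=[q^{-1}r]_n$ and $W:=[2r]_n=2r-n>\tfrac n3$ (using $r>\tfrac{2n}{3}$). Thus it suffices to find a step $v$ and a run $k_0,k_0+v,\dots,k_0+(L-1)v$ inside $[0,2q)$, with $L\ge j(n)$, along which $[ks]_n<W$ throughout: its image under $k\mapsto kq^{-1}$ is an arithmetic progression of step $vq^{-1}$ lying in $S_q\cap S_r$, and this progression is good as soon as $\gcd(k_0,\gcd(v,n))=1$, because $q^{-1}\in(\Z/n)^\times$. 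To produce such a run I would approximate $\tfrac{s}{n}=\tfrac{q^{-1}r}{n}$ by a rational: Dirichlet's theorem with parameter $Q\asymp j(n)$ yields $u/v$ with $1\le v\le Q$ and $|w|\le\tfrac{n}{Q+1}$, where $w:=vs-un$. Since $[(k_0+xv)s]_n\equiv[k_0s]_n+xw\pmod n$, stepping by $v$ moves the $S_r$-coordinate by the \emph{small} quantity $w$; choosing $k_0$ so that $[k_0s]_n$ sits at the appropriate end of $[0,W)$ (using the enormous width $W>\tfrac n3$, there is a wide range of admissible $k_0$), the values $[k_0s]_n+xw$ stay in $[0,W)$ for about $W/|w|\ge\tfrac{Q+1}{3}$ consecutive $x$, which exceeds $j(n)$ once $Q$ is a small multiple of $j(n)$.

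Two bookkeeping checks then complete the $\gcd(q,n)=1$ case. First, the run must actually fit among the $2q$ admissible values of $k$, i.e.\ $(L-1)v\lesssim j(n)\cdot Q\asymp j(n)^2$ must be $<2q$; since $q>\tfrac{\sqrt n}{2}$, this is exactly what the hypothesis $24\,j(n)^2<\sqrt n$ secures, the constant $24$ absorbing the endpoint offset in the run, the room needed to locate a suitable starting index, the extra factor of two in the $\gcd(q,n)=2$ case, and the slack to secure a usable unit. Second, the progression must be good and must yield a \emph{usable} unit; this residual freedom lets us both pick a starting $k_0$ coprime to $\gcd(v,n)$ and, if necessary, slide past the at most two non-usable units $1$ and $\tfrac n2+1$. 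The $\gcd(q,n)=2$ case is identical after the substitution above and tracking the lone factor of $2$; it is here that $n>10000$ and the bound on $j(n)/n$ from Remark~\ref{jnnbound} keep all the estimates valid.

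The hard part, and the reason this is only ``the first part'' of the argument, is the regime the above does not cover: when $\tfrac{q^{-1}r}{n}$ is so well approximated that the forced denominator is $v\le 3$. Then the step $vq^{-1}$ has $\gcd(v,n)\in\{1,2,3\}$ and the entire run can be trapped in a single non-unit residue class, so no unit need appear. These finitely many near-coincidences ($\tfrac{q^{-1}r}{n}$ close to $0,\tfrac12,\tfrac13,\tfrac23$, together with their $\gcd(q,n)=2$ analogues) cannot be handled uniformly and must be analyzed one at a time in \S8; the only case in which $S_q\cap S_r$ genuinely contains no usable unit turns out to be $s\equiv 1\pmod{n/2}$, that is $r=3q+p$, where by Proposition~\ref{threeq} failure occurs precisely for $p=1,2,4$. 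This accounts for the single exception in the statement and reduces everything else to the denominator-$>3$ argument above.
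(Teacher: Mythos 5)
Your overall strategy is the same as the paper's: approximate $x=[q^{-1}r]_n/n$ (resp.\ $[zr]_n/n$) by $\alpha/\beta$ via Dirichlet with parameter $N\asymp j(n)$, step through $S_q$ by multiples of $\beta q^{-1}$ so that the image in the $S_r$-coordinate drifts by the small quantity $w=\beta s-\alpha n$, and harvest a good progression of length $j(n)$; the bookkeeping $24j(n)^2<\sqrt n$ plays exactly the role you describe. However, there is a genuine gap at the step where you ``choose $k_0$ so that $[k_0s]_n$ sits at the appropriate end of $[0,W)$,'' justified only by ``the enormous width $W>\frac n3$.'' The starting positions actually available to you are not spread over $[0,W)$: writing $k_0=\gamma+x\beta$, the reachable values of $[k_0s]_n$ with $k_0$ still small enough for the run to fit inside $[0,2q)$ are confined to $O(n/N)$-neighborhoods of the points $\{\gamma\alpha/\beta\}\,n$, i.e.\ of multiples of $n/\beta$. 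So you must exhibit a residue $\gamma$ with $\{\gamma\alpha/\beta\}$ in a window of length about $1/4$ inside $(0,1/3)$ \emph{and} satisfying the coprimality needed for goodness, and this is a nontrivial finite verification, not an automatic consequence of $W>\frac n3$. The paper's proof makes this the crux: with target window $[\frac19,\frac29]$ it runs a computer search over $4\le\beta\le 10000$ and finds that $\beta=4,10,18,30$ admit no admissible $\gamma$, each requiring an ad hoc replacement target whose validity depends on the sign of $x-\alpha/\beta$; for $\beta>10000$ it invokes the Jacobsthal bound $j(\beta)<.006\,\beta$ again to find a unit of $(\Z/\beta)^\times$ in the window. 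Your proposal flags only $\beta\le3$ as exceptional, so it would fail as written at least at $\beta=4$ (where the only nonzero multiples of $\frac14$ sit on the boundary of every usable one-sided window), and it omits the argument needed for large $\beta$.

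A smaller point: your goodness criterion $\gcd(k_0,\gcd(v,n))=1$ is correct and slightly more flexible than the paper's requirement $\gamma\in(\Z/\beta)^\times$, and your one-sided windows (start near $0$ if $w>0$, near the top if $w<0$) buy more room than the paper's symmetric $[\frac19,\frac29]$; with these refinements some of the paper's exceptional denominators might disappear, but $\beta=4$ does not, and in any case the existence of an admissible starting residue still has to be \emph{proved} for every $\beta$ in the Dirichlet range rather than asserted. Your description of the $\beta\le3$ endgame (deferral to \S8, with $r=3q+p$, $p=1,2,4$ as the sole genuine failure) matches the paper, except that in several of those subcases the usable unit is found in $S_p\cap S_q$ or $S_p\cap S_r$ rather than in $S_q\cap S_r$, which is why the lemma is stated for all three intersections.
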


\begin{proof}
If $\gcd(q,n) = 1$, recall $S_q = \{kq^{-1}: 0 \le k < 2q\}$. We note that $S_q \cap S_r$ does not contain unusable units, because $1 \notin S_r$, and if $\frac{n}{2}+1$ is a unit, $n$ is even, so $q$ is odd, and then $[(\frac{n}{2}+1)q]_n = \frac{n}{2} + q > 2q$, so $\frac{n}{2}+1 \notin S_q$.

If $\gcd(q,n) = 2$, as usual, we write $q = 2b, n = 2c$ and let $d = b^{-1} \in \Z/c$. We can choose $z = d$ or $d + \frac{n}{2}$ such that $z$ is a unit and such that the subset $S_q' :=\{kz: 0 \le k < q\} \subset S_q$ contains no unusable unit but 1. (If $c$ is odd, then $\frac{n}{2}+1$ is not a unit, so we choose $z$ to be whichever one of these is odd. If $c$ is even, $d$ and $d + \frac{n}{2}$ are both units, and we choose the one which is $b^{-1}$ mod $n$.) Then $S_q' \cap S_r$ does not contain unusable units.

We let $x = [q^{-1}r]_n/n$ or $[zr]_n/n$, depending on whether $\gcd(q,n) = 1$ or $2$. By hypothesis, we can choose some $N \in (12 j(n), \frac{\sqrt{n}}{2j(n)})$. Then Dirichlet's approximation theorem states that there are relatively prime integers $\alpha, \beta$ with $1 \le \beta \le N$ such that
$$\left| x - \frac{\alpha}{\beta} \right| < \frac{1}{\beta N}$$

Suppose there is some $\gamma \in (\Z/\beta)^\times$ with $\{\frac{\gamma\alpha}{\beta}\} \in [\frac{1}{9}, \frac{2}{9}]$. Then, for $k < j(n)$,
$$\left|(\gamma + k\beta)\left(x - \frac{\alpha}{\beta}\right)\right| < (\gamma + k\beta) \frac{1}{\beta N} < \frac{j(n)}{N} < \frac{1}{12}$$
So $(\gamma + k\beta)q^{-1}r$ or $(\gamma + k\beta)zr$ is within $\frac{n}{12}$ of $\frac{\gamma\alpha}{\beta} n \in [\frac{n}{9}, \frac{2}{9}]$, and hence in $[0, \frac{n}{3}] \subset [0, 2r-n)$. This means that
$$\{(\gamma + k\beta)q^{-1}: 0 \le k \le j(n)-1\} \subset S_r$$
(or the same when $q^{-1}$ is replaced by $z$). And, for $k < j(n)$,
$$\gamma + k\beta < j(n)N < \frac{\sqrt{n}}{2} < q \implies \{(\gamma + k\beta)q^{-1}: 0 \le k \le j(n)-1\} \subset S_q$$
(or the same for $z$ and $S_q'$). Then $S_q \cap S_r$ or $S_q' \cap S_r$ contains a good progression of length $j(n)$, and hence a usable unit.

The question is now if such a $\gamma$ exists. If $\beta > 10000$, Remark \ref{jnnbound} implies that there is an element $\delta$ of $(\Z/\beta)^\times$ in $[\frac{\beta}{9}, \frac{2\beta}{9}]$, so $\alpha^{-1}\delta$ is such a $\gamma$. For $\beta \in [4, 10000]$, a computer search reveals that the only values of $\beta$ for which this is not the case are $\beta = 4,10,18,30$. For $\beta = 4$, we take $\gamma = \alpha^{-1}$, so we are ``aiming for" $\frac{1}{4}$ and are permitted an error $< \frac{1}{12}$; for $\beta = 10$, we take $\gamma = \alpha^{-1}$ if $x > \frac{\alpha}{\beta}$, allowing an error of $<\frac{7}{30}$, and $\gamma = 3\alpha^{-1}$ if $x \le \frac{\alpha}{\beta}$, allowing an error of $< \frac{3}{10}$; for $\beta = 18$ we take $\gamma = \alpha^{-1}$ or $5\alpha^{-1}$ (for $x > \frac{\alpha}{\beta}$ or $x \le \frac{\alpha}{\beta}$, resp.) and are allowed an error of $< \frac{5}{18}$; and for $\beta = 30$, $\gamma = \alpha^{-1}$ or $7\alpha^{-1}$, allowing an error of $< \frac{7}{30}$. So the bound $12j(n) < N$ is sufficient, as this gives an accumulated error $< \frac{j(n)}{N} < \frac{1}{12}$.

For $\beta \le 3$, it is clearly impossible to aim for a unit in $(0, \frac{1}{3})$, so we will need a slightly more sophisticated method of choosing a good progression in $S_q \cap S_r$. It is for that reason that we separately treat each case $x \le \frac{\alpha}{\beta}$ or $ x> \frac{\alpha}{\beta}$ for each $\alpha \le \beta \le 3$ in \S8.
\end{proof}

\section{The case $q > \frac{\sqrt{n}}{2}$, part 2}
What follows is a list of propositions explaining what happens when $x$ is over- or under-approximated by a given fraction of denominator $\le 3$. The proof strategies are very similar in each proposition: there is a certain balancing act involved, as one identifies a good arithmetic progression of length $\ge j(n)$ which is in $S_r$ because sufficient error has built up that $x$ is very far from its approximation, but on the other hand, one is not allowed to wait too long for the error to build up, as multiples of $q^{-1}/z$ may no longer be in $S_q$. As each case is somewhat different in terms of minimal size of error, needed amount of built-up error, size of $q$, and resulting necessary bounds on $n$, it has not been possible to condense these in any useful way. (To be clear, each proposition is a proof of Lemma \ref{rationalapprox} in the case described in the statement of the proposition.) The bounds obtained in these propositions will also be used in \S9 to determine what needs to be checked by computer, as this is where the reduction algorithm described there is least useful. (We do not claim that these bounds are optimal, as they are not, but they will be sufficient to reduce the needed computation to checking only triples with $n \le 10000$.)

\begin{prop}
The case $\alpha = 0$ and the case $\alpha = 1, \beta = 3, x \le \frac{\alpha}{\beta}$.
\end{prop}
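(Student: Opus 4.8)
We are proving a case of Lemma~\ref{rationalapprox}. Recall the framework: $x = [q^{-1}r]_n/n$ (or $[zr]_n/n$), and Dirichlet approximation gives $\alpha, \beta$ coprime with $1 \le \beta \le N$ and $|x - \frac{\alpha}{\beta}| < \frac{1}{\beta N}$. We must exhibit a good arithmetic progression of length $j(n)$ inside $S_q \cap S_r$ (or $S_q' \cap S_r$), since such a progression necessarily contains a usable unit. The target zone for membership in $S_r$ is $[0, 2r-n) \supset [0, \frac{n}{3}]$, and the constraint for membership in $S_q$ is that the multiplier of $q^{-1}$ (or $z$) stays below $q$. The subcases $\alpha = 0$ and $(\alpha,\beta,\,x\le\frac{\alpha}{\beta}) = (1,3,\le)$ are precisely the ones where $x$ is small, so that small multiples of $q^{-1}r$ already land near $0$.

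**The approach.** Let me sketch the approach to understand what I am trying to prove.

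**What I need to establish.** I want to prove the following.

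\begin{prop}
\label{alphazero}
Lemma \ref{rationalapprox} holds in the case $\alpha = 0$, and in the case $\alpha = 1$, $\beta = 3$, $x \le \frac{\alpha}{\beta}$.
\end{prop}

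\begin{proof}[Proof sketch / plan]
The plan is to exploit the fact that in both cases $x$ is small: when $\alpha = 0$ we have $0 \le x < \frac{1}{\beta N} \le \frac{1}{N}$, and when $(\alpha,\beta) = (1,3)$ with $x \le \frac{1}{3}$ we have $\frac{1}{3} - \frac{1}{3N} < x \le \frac{1}{3}$. In either case, consecutive multiples $x, 2x, 3x, \dots$ of $x$ advance slowly (for $\alpha = 0$) or advance by increments close to $\frac{1}{3}$ (for the $(1,3)$ case), so I can pin down a block of consecutive integers $k$ for which $\{kx\}$ lands in the target interval $[0, \frac{1}{3}]$, i.e.\ $(kq^{-1}r)$ (or $kzr$) lies in $[0, 2r-n)$, so that $kq^{-1}$ (or $kz$) is in $S_r$.

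First I would treat $\alpha = 0$: here $kx < \frac{k}{N}$, so $\{kx\} \in [0, \frac{1}{3}]$ automatically for all $k \le \frac{N}{3}$, giving $\{k q^{-1} : 0 \le k < K\} \subset S_r$ for a block of length $K = \lfloor \frac{N}{3}\rfloor$. For these to also lie in $S_q$ I need $k < q$; since $N < \frac{\sqrt n}{2 j(n)} < \frac{\sqrt n}{2} < q$, the whole block of length $\lfloor N/3 \rfloor$ sits in $S_q$. Because $N > 12 j(n)$, this block has length $> 4 j(n) > j(n)$, so it contains a good progression (indeed a run of $j(n)$ consecutive multipliers $k\beta + \gamma$ with any chosen $\gamma$), hence a usable unit. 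Since $\alpha = 0$ forces $\gcd(\alpha,\beta) = 1$ only for $\beta = 1$, I should note $x$ is within $\frac 1N$ of $0$ and the argument is cleanest stated directly in terms of the integers $k = 0, 1, 2, \dots$ themselves rather than an arithmetic progression mod $\beta$.

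For the $(1,3)$, $x \le \frac{1}{3}$ case: I write $x = \frac{1}{3} - \varepsilon$ with $0 \le \varepsilon < \frac{1}{3N}$. Consider multipliers $k \equiv 0 \pmod 3$, say $k = 3t$; then $kx = t - 3t\varepsilon$, so $\{kx\} = \{-3t\varepsilon\}$, which for $t < \frac{N}{9}$ (say) satisfies $3t\varepsilon < \frac{t}{N} < \frac{1}{9}$, placing $\{kx\}$ in $[1 - \frac{1}{9}, 1) \cup \{0\}$ — that is just below $1$, not in $[0,\frac 13]$. So instead I take $k \equiv 1 \pmod 3$, $k = 3t+1$: then $kx = t + \frac13 - (3t+1)\varepsilon$, giving $\{kx\} \approx \frac13$ from below, which lies in $[0,\frac13]$ for all small $t$. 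The key is to choose the residue of $k$ mod $3$ so that $\{kx\}$ lands in $[0,\frac13]$ and to verify that the error $(3t+1)\varepsilon$ accumulated over $t = 0, 1, \dots, j(n)-1$ stays below the slack $\frac{1}{12}$; since $(3t+1)\varepsilon < \frac{3t+1}{3N} < \frac{j(n)}{N} < \frac{1}{12}$ using $N > 12 j(n)$, the block $\{(3t+1)q^{-1} : 0 \le t < j(n)\}$ (or with $z$) lies in $S_r$. These multipliers $3t+1 < 3 j(n) < N < q$ also give elements of $S_q$, so I obtain a good progression of length $j(n)$ in $S_q \cap S_r$ (resp.\ $S_q' \cap S_r$), which contains a usable unit by the defining property of $j(n)$.

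The main obstacle, and the step requiring the most care, is the bookkeeping of \emph{which} residue class of $k$ modulo $\beta$ sends $\{kx\}$ into the target interval $[0,\frac13]$, together with the simultaneous verification that the accumulated approximation error $k\,|x - \frac{\alpha}{\beta}| < \frac{k}{\beta N}$ never exceeds the allowed slack of $\frac{1}{12}$ across the full run of $j(n)$ steps while the multipliers themselves remain below $q$. Unlike the generic $\beta \ge 4$ case handled in Lemma~\ref{rationalapprox}, here $\beta \le 3$ leaves no room to ``aim for'' a unit strictly inside $(0,\frac13)$ via a single $\gamma$, so the progression must be chosen along a fixed residue class mod $\beta$ and I must confirm that this class is good (i.e.\ $\gcd$ of its offset, common difference, and $n$ is $1$), which follows because $q^{-1}$ (resp.\ $z$) is a unit and the common difference $\beta q^{-1}$ is coprime to $n$ after the analysis of $\gcd(\beta, n)$.
\end{proof}
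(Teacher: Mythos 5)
Your argument is correct, but it takes a noticeably heavier route than the paper's, which disposes of both cases in one line: since $x\le\frac13$ in either case ($x<\frac1N<\frac1{12}$ when $\alpha=0$, and $x\le\frac13$ by hypothesis when $\alpha=1,\beta=3$), the very first multiple already satisfies $[q^{-1}r]_n\le\frac n3<2r-n$ (using $r>\frac{2n}{3}$), so $q^{-1}$ (resp.\ $z$) itself lies in $S_r$; it is in $S_q$ trivially, and it is usable by the observations made at the start of the proof of Lemma \ref{rationalapprox}. In other words, these are precisely the subcases where no accumulation of error and no Jacobsthal-length progression are needed, because the target zone $[0,\frac n3]$ is hit on the first step by an element already known to be a usable unit. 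Your construction of a good progression $\{(3t+1)q^{-1}:0\le t<j(n)\}$ (and the block $k=0,\dots,\lfloor N/3\rfloor$ for $\alpha=0$) is sound --- the error bounds $(3t+1)\varepsilon<\frac{j(n)}{N}<\frac1{12}$ and the size bounds $3j(n)<N<q$ all check out, and the progression is good since $q^{-1}$ is a unit --- so nothing fails; you have simply re-proved the generic $\beta\ge4$ mechanism in a situation where the single element $q^{-1}/z$ suffices. What your approach buys is uniformity with the remaining subcases of \S8; what the paper's buys is that these two cases require no hypothesis on $j(n)$ at all, which matters later when the propositions of \S8 are reused in \S9 with explicit numerical bounds.
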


\begin{proof}
As $r > \frac{2n}{3}$, $[0, \frac{n}{3}] \subset [0, 2r-n)$, so in any of these cases $q^{-1}/z$ is a usable unit in $S_q \cap S_r$.
\end{proof}

\begin{prop}
The case $\alpha = 1, \beta = 3, x > \frac{\alpha}{\beta}$. In this case, we use that $j(n) < \frac{n}{216}$.
\end{prop}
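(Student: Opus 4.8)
The plan is to produce, inside $S_q \cap S_r$, a \emph{good} arithmetic progression of length $j(n)$; by the definition of the Jacobsthal function such a progression contains a number coprime to $n$, and since $S_q \cap S_r$ contains no unusable units (as recorded at the start of the proof of Lemma~\ref{rationalapprox}), this unit is automatically usable. I will work with $w = q^{-1}$ when $\gcd(q,n)=1$; the case $\gcd(q,n)=2$ is entirely analogous, using $w=z$ and the identity $qz \equiv 2 \pmod n$ in place of $qw\equiv 1$. Recall $S_q = \{cw : 0\le c<2q\}$, and that the image of $cw$ under multiplication by $r$ is $[c\,\overline r]_n$ with $\overline r := [wr]_n$. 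The case hypothesis $x = \overline r/n > \tfrac13$ together with the Dirichlet bound $|x-\tfrac13|<\tfrac1{3N}$ gives $n/3 < \overline r < n/3 + n/(3N)$, so $E := [3wr]_n = 3\overline r - n$ is an \emph{integer} with $1\le E < n/N < n/(12j(n))$, and the images along a progression with common difference $3w$ step by exactly $E$. I note first that $r=3q+p$ cannot occur here: it forces $\overline r = \tfrac n2+1$ (resp.\ $\tfrac n2+2$), i.e.\ $x\approx\tfrac12$, which belongs to the $\beta=2$ case, so the exception in Lemma~\ref{rationalapprox} is irrelevant in this proposition.

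The easy case is $3\nmid n$: the progression $3w,6w,\dots,3j(n)\,w$ is good because $\gcd(3w,n)=\gcd(3,n)=1$, its coefficients are $\le 3j(n) < 2q$, and its images $E,2E,\dots,j(n)E$ are all $< j(n)E < n/12 < n/3 < 2r-n$, hence in $S_r$. So I would then assume $3\mid n$, in which case $E\equiv -n\equiv 0\pmod 3$ and every $3kw$ is a non-unit; any good progression must therefore use coefficients $c\not\equiv 0\pmod 3$.

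The key input for $3\mid n$ is the identity $qE\equiv 3r\pmod n$, obtained by multiplying $E=3wr-n$ by $q$. Writing $T:=2r-n$ and $s:=T-n/3 = 2(r-\tfrac{2n}{3})\ge 2$, this reduces mod $n$ to $qE\equiv \tfrac32 s\pmod n$ with $0\le \tfrac32 s<n$. I would split on the size of $E$, equivalently on $m:=\lfloor qE/n\rfloor$. For \emph{small} $E$ (say $E<n/q$, i.e.\ $m=0$): use $\{(1+3k)w\}$, whose images $\overline r + kE = n/3 + (3k+1)E/3$ hug the target zone from just above $n/3$ and stay $<T$ while $(3k+1)E/3<s$; since $m=0$ forces $\tfrac32 s = qE$ exactly, we get $s=\tfrac23 qE$, so the number of admissible $k$ is about $s/E=\tfrac23 q \gg j(n)$, the coefficients stay below $2q$, and the progression is good. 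For \emph{large} $E$ (say $E>n/(2q-3j(n))$): use coefficients $\equiv 2\pmod 3$ anchored at $c_0:=$ the least such integer $\ge n/E$. For $c\equiv 2\pmod 3$ the image of $cw$ sits near $2n/3$ and wraps into $[0,T)$ once $cE/3$ carries it past $n$, landing at $cE/3 - n/3$; the identity $qE\equiv\tfrac32 s$ shows the resulting window of admissible $c$ below $2q$ has length $\ge 3j(n)$, so $\{(c_0+3k)w : 0\le k<j(n)\}$ is a good progression in $S_q\cap S_r$ (here $c_0\approx n/E < 2q-3j(n)$, so $c_0+3j(n)<2q$).

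Because $n/(2q-3j(n)) < n/q$ (as $q>3j(n)$), the two ranges of $E$ overlap and cover every $E\ge1$, so one of the two progressions always applies. The bound $j(n)<n/216$ — which follows from the standing hypotheses $24j(n)^2<\sqrt n$ and $n>10000$, since then $j(n)/n < 1/(\sqrt{24}\,n^{3/4}) < 1/216$ — is precisely what forces the threshold inequalities above (the overlap condition, the coefficient bound $c_0+3j(n)<2q$, and the run-length estimates) to hold simultaneously. The hard part will be the boundary regime $E\approx n/q$, equivalently $r$ extremely close to $\tfrac{2n}{3}$: there $S_r$ degenerates toward the multiples of $3$ (the $c=2$ instance of the obstruction in Remark~\ref{remark}), the first run is too short and the wrap is only barely in range, so the overlap must be arranged with exactly the right constants. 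Controlling the single extra multiple of $n$ bookkept by $qE\equiv\tfrac32 s$, and checking that the $\equiv 2\pmod 3$ wrap lands in $[0,T)$ with room for $j(n)$ terms, is where the care concentrates and is what dictates the stated bound.
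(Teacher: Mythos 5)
Your overall strategy --- producing a good arithmetic progression of length $j(n)$ among coefficients $c \not\equiv 0 \pmod 3$, using the first run of $c \equiv 1 \pmod 3$ when $E$ is small and the wrapped run of $c \equiv 2 \pmod 3$ when $E$ is large --- is the same as the paper's, and your treatment of $\gcd(q,n)=1$ is essentially sound: there the two regimes $E < n/q$ and $E > n/(2q-3j(n))$ really do overlap because the coefficient bound is $2q$. The genuine gap is the case $\gcd(q,n)=2$, which you wave off as ``entirely analogous.'' It is not: to avoid the unusable unit $\frac{n}{2}+1$ one must work in $S_q' = \{kz : 0 \le k < q\}$, so the coefficient bound is $q$, not $2q$, and your two thresholds become $E < n/q$ and $E > n/(q-3j(n))$, which do \emph{not} overlap. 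After the (omitted but easy) reduction to $r < \frac{3n}{4}$ --- needed so that $q > \frac{n}{8}$ and hence $3 < n/q < 8$ --- the uncovered interval $[n/q,\, n/(q-3j(n))]$ lies inside $(3,9)$ (this is exactly where $j(n)<\frac{n}{216}$ enters, via $9(q-3j(n))>n$), so the only multiple of $3$ it can contain is $E=6$, i.e.\ $[zr]_n = \frac{n}{3}+2$. When $q$ then lies in the window $[\frac{n}{6}, \frac{n}{6}+3j(n)]$, the first run gives at most $j(n)$ admissible coefficients and the wrapped run lives in an interval of length at most $3j(n)$ below $q$; neither is guaranteed to contain $j(n)$ consecutive terms, and their union is not an arithmetic progression, so no rearrangement of constants rescues the dichotomy.

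The paper closes this hole with an idea your proposal is missing: multiplying $zr \equiv \frac{n}{3}+2$ by $q$ and using $qz \equiv 2$ gives $2r \equiv 2q \pmod{\frac{n}{3}}$, hence $r \equiv q \pmod{\frac{n}{6}}$, which together with $\frac{2n}{3} < r < \frac{3n}{4}$ forces $r = \frac{n}{2}+q = 3q+p$ --- precisely the case excluded from Lemma~\ref{rationalapprox} and handled in Proposition~\ref{threeq}. Your remark about $r=3q+p$ points the wrong way: you establish $r=3q+p \Rightarrow x \approx \frac12$ (which correctly shows the lemma's exception is vacuous here), but what is needed is the converse implication $E=6,\ \gcd(q,n)=2 \Rightarrow r=3q+p$. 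You correctly sense that the boundary regime $E \approx n/q$ (the $c=2$ shadow of Remark~\ref{remark}) is the hard part, but resolving it requires this modular identification, not ``exactly the right constants.''
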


\begin{proof}
We write $q^{-1}r/zr = \frac{n}{3} + m$, with $0 < m < \frac{n}{3N}$. First of all, if $r \ge \frac{3n}{4}$, $2r - n \ge \frac{n}{2} > x$, so $q^{-1}/z$ is a usable unit in $S_q \cap S_r$, so we may assume $r < \frac{3n}{4}$. And if $\gcd(3,n) = 1$, then $3q^{-1}/3z$ is a usable unit in $S_q \cap S_r$, so we may assume $m \in \Z$. Also, in this case, $r > \frac{2n}{3}$ implies $2r-n \ge \frac{n}{3} + 2$, so if $m = 1$, then $q^{-1}/z \in S_q \cap S_r$. We therefore assume $m \ge 2 \in \Z$. At this point, it will be convenient to consider the $\gcd(q,n) = 1$ and 2 cases separately.

In the $\gcd(q,n) = 1$ case, $m \ge 2$, $r < \frac{3n}{4}$, and
$$(2q - 3j(n))m > 2\left(\frac{n}{4} - 3j(n)\right) > \frac{n}{3}$$
when $6j(n) < \frac{n}{6}$. Then, if we let $l$ be the smallest positive integer such that $(2+3l)m > \frac{n}{3}$, we must have
$2+3l \le 2q - 3j(n) + 2$. Furthermore, $3j(n)m < \frac{j(n)}{N} n < \frac{n}{3}$, and so
$$\{(2+3k)q^{-1}: l \le k \le l + j(n) - 1\} \subset S_q \cap S_r$$
as $[(2+3k)q^{-1}r]_n = [\frac{2n}{3} + (2+3k)m]_n \in [0, \frac{n}{3})$ and $2+3k < 2q$ for $k$ in this range. So $S_q \cap S_r$ contains a good progression of length $j(n)$, and therefore a usable unit. (This is the prototype for the arguments that will be used throughout this section.)

In the $\gcd(q,n) = 2$ case, we will need to deal with the case $m = 2$ separately before applying an argument as above. In this case,
$$zr \equiv \frac{n}{3} + 2 \implies 2r \equiv q \frac{n}{3} + 2q \implies 2r \equiv 2q \mod \frac{n}{3} \implies r \equiv q \mod \frac{n}{6}$$
Assuming $\frac{2n}{3} < r < \frac{3n}{4}$ (so also $\frac{n}{8} < q < \frac{n}{3}$), this can only happen if $r = \frac{n}{2} + q$, which we recognize as the case $r = 3q + p$, which is dealt with in Proposition \ref{threeq}. So we may assume $m \ge 3$. Then
$$(q - 3j(n))m \ge 3\left(\frac{n}{8} - 3j(n)\right) > \frac{n}{3}$$
when $9j(n) < \frac{n}{24}$. Then by the same argument as before, $S_q' \cap S_r$ contains a good progression of the form $(2+3k)q^{-1}$ of length $j(n)$, and therefore a usable unit.

%We treat the case $m = 2$ separately: first of all, if $\gcd(q,n) = 1$, then this implies $r \equiv 2q$ mod $\frac{n}{3}$, so as $r > \frac{2n}{3}$, we have $r = \frac{n}{3} + 2q$ or $r = \frac{2n}{3} + 2q$. In the first case, $n = \frac{9}{2}q + \frac{3}{2}p$, and $\frac{n}{3}+2 < 2r-n$ (so $q^{-1}/z \in S_r$) iff $p+2 < q$. So it remains to consider $q-p = 0,1,2$. First of all, for $n$ to be an integer, we must have $q-p \equiv 0$ mod $2$, and if $p = q$, then $S_p \cap S_q = S_p$ contains the usable unit $p^{-1}$. So it suffices to consider $q = p+2$. In this case,
%\begin{itemize}
%\item $p \equiv 0$ mod 3: We claim that this is impossible. As $n = 6p + 9 = 6q - 3$, we have $q^{-1} = \frac{kn}{3} + 2$ for some $k$, and $q^{-1}r = q^{-1}(4q - 1) \equiv 2 - \frac{kn}{3}$, so this case only applies if $
%\item $p \equiv 1$ mod 3 implies $\gcd(q,n) \ge 3$
%\item $p \equiv 2$ mod 3 implies $\frac{n}{3} = 2p + 3 \equiv 1$ mod 3, so
%$$(2p+1)q^{-1}r = \left(\frac{n}{3} - 1\right)\left(\frac{n}{3} + 1\right) = \frac{n^2}{9} - 1 \equiv \frac{n}{3}-1$$
%i.e., $2p+1q^{-1} \in S_q \cap S_r$ is a usable unit ($2p+1 = \frac{n}{3}-2$ being coprime to all prime divisors $>3$ of $n$, odd, and $\equiv 2$ mod 3).
%\end{itemize}
\end{proof}

\begin{prop}
\label{eightthree}
The case $\alpha = 1, \beta = 2, x \le \frac{\alpha}{\beta}$. In this case, we use $j(n) < \frac{n}{24}$.
\end{prop}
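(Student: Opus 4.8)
The plan is to build, inside $S_q \cap S_r$ (or inside $S_q'\cap S_r$ when $\gcd(q,n)=2$), a good arithmetic progression of length $j(n)$ consisting of \emph{odd} multiples of $q^{-1}$ (resp.\ of the chosen unit $z$), just as in the prototype argument of the previous proposition. Write $t := [q^{-1}r]_n$ (resp.\ $[zr]_n$) and $s := n-2t$; the hypotheses $\alpha=1$, $\beta=2$, $x\le\tfrac12$ say $t\le\tfrac n2$ and $\tfrac n2-t<\tfrac{n}{2N}$, so $0\le s<\tfrac nN$ is small. First I would dispose of the range $r\ge\tfrac{3n}4$: there $2r-n\ge\tfrac n2\ge t$, and since (for $\gcd(q,n)=1$) $s=0$ would force $n\mid 2r$ with $\tfrac n2<r<n$, the inequality is strict and $q^{-1}$, resp.\ $z$, already lies in $S_q\cap S_r$ (the lone $\gcd(q,n)=2$ boundary case $t=\tfrac n2$ being set aside). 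So I assume from now on $r<\tfrac{3n}4$, equivalently $2(p+q)>\tfrac n2$.

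The crucial computation is the $\beta=2$ analogue of the relations used for $\beta=3$. From $2t\equiv -s$ together with the obtuse identity $[2r]_n=n-2p-2q$, multiplying by $q$ (and using $qz\equiv 2\pmod n$ in the $\gcd(q,n)=2$ case) gives
\[
 qs\equiv 2(p+q)\pmod n \quad(\gcd(q,n)=1), \qquad\qquad qs\equiv 4(p+q)\pmod n \quad(\gcd(q,n)=2).
\]
Since $2(p+q)<n$, the first congruence yields $qs\ge 2(p+q)>\tfrac n2$, hence $s>\tfrac{n}{2q}$. This is exactly the lower bound on $s$ needed to guarantee that every index $2j+1$ produced below stays $<2q$, i.e.\ that the associated element genuinely lies in $S_q$.

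I would then run the progression. For odd $k=2j+1$ one computes $[(2j+1)t]_n=t-js$ as long as $t-js\ge 0$, and this value lies in the target zone $[0,2r-n)\supset[0,\tfrac n3)$ precisely for $j$ in an interval of length $\tfrac{2r-n}{s}>\tfrac{n}{3s}$. Because $s<\tfrac nN$, and invoking the bound $j(n)<\tfrac n{24}$, this interval comfortably exceeds $j(n)$, so it contains $j(n)$ consecutive integers $j$; by $s>\tfrac n{2q}$ each of them satisfies $2j+1<2q$, so $\{(2j+1)q^{-1}\}$ is a set of $j(n)$ consecutive terms of the good progression of common difference $2q^{-1}$ lying in $S_q\cap S_r$. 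A good progression of length $j(n)$ contains a unit, and by the opening paragraph of Lemma~\ref{rationalapprox} every unit of $S_q\cap S_r$ is usable; this unit is the desired element.

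The step I expect to be the real obstacle is the case $\gcd(q,n)=2$. There the admissible index set is only $S_q'=\{kz:0\le k<q\}$, so the fitting condition tightens to $s>\tfrac nq$, i.e.\ $qs>n$. The identity $qs\equiv 4(p+q)\pmod n$ with $4(p+q)\in(n,\tfrac{4n}3)$ gives $[qs]_n=4(p+q)-n$, so $qs>n$ holds in every case \emph{except} the residual one $qs=4(p+q)-n$; unwinding this equation pins down a single thin family (for instance $s=2$ forces $r=3p+q$), which is not covered by the generic mechanism and must be treated by hand — either by exhibiting the unit in the full set $S_q$ via the coset $kd+\tfrac n2$ that $S_q'$ discards, or by relating it to an already-settled configuration. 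It is in disposing of this residual family, and in the constant-chasing that makes the length-$j(n)$ window fit simultaneously inside the target zone and inside $S_q'$, that the explicit bound $j(n)<\tfrac n{24}$ is used.
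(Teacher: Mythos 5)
Your overall strategy is the paper's: produce a good arithmetic progression of length $j(n)$ consisting of odd multiples of $q^{-1}$ (resp.\ of $z$) inside $S_q\cap S_r$ (resp.\ $S_q'\cap S_r$), after first dispatching $r\ge\frac{3n}{4}$. Your route to the key lower bound is genuinely different and arguably cleaner: where the paper rules out the small values of $m=s/2$ one at a time (using $r\equiv -m\frac{q}{2} \bmod \frac{n}{2}$ and range considerations, with the case $m=3$, which forces $p=\frac{q}{2}$, diverted to Proposition \ref{usableunits}) and then combines $m\ge 1$, resp.\ $m\ge 5$, with $q\ge\frac{n}{8}$, you read off the multiplicative bound $qs\ge 2(p+q)>\frac{n}{2}$ directly from the congruence $qs\equiv 2(p+q)$, which is exactly the fitting condition $\frac{n}{s}<2q$. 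The $\gcd(q,n)=1$ half of your argument is complete and correct.

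The gap is the one you flag yourself: in the $\gcd(q,n)=2$ case you leave the ``residual'' alternative $qs=4(p+q)-n$ untreated, offering only a sketch of how it might be handled. In fact that case is vacuous, and your tentative description of it is off: from $qs=4(p+q)-n=3(p+q)-r$ together with $r>\frac{2n}{3}>2(p+q)$ one gets $qs<p+q\le 2q$, hence $s<2$; but $s=n-2t$ is a nonnegative even integer here ($n$ is even when $\gcd(q,n)=2$), and $s=0$ is impossible since $4(p+q)-n>0$, so $s\ge 2$ --- a contradiction. In particular $s=2$ cannot occur in the residual case, so the family ``$s=2$ forces $r=3p+q$'' never arises and there is nothing to treat by hand. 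Once this two-line parity argument is inserted, $qs\ge 4(p+q)>n$ holds unconditionally and your generic mechanism covers all of $\gcd(q,n)=2$, including the configuration $p=\frac{q}{2}$ that the paper hands off to Proposition \ref{usableunits}. As written, though, the proof is not complete, and the missing step is exactly where you predicted the difficulty would lie.
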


\begin{proof}
As before, we write $q^{-1}r/zr = \frac{n}{2} - m$ with $0 \le m < \frac{n}{2N}$. If $r > \frac{3n}{4}$, then $q^{-1}/z \in S_q \cap S_r$ is a usable unit, so we may assume $r \le \frac{3n}{4}$. To establish a lower bound on $m$, we again split into cases based on $\gcd(q,n)$.

If $\gcd(q,n) = 1$, then $r \ne \frac{n}{2}$ implies that $m \ne 0$. Also $m \ne \frac{1}{2}$, as otherwise we would have $2q^{-1}r \equiv -1$ and $2r-n = n - q \ne n - 2p -2q$. Now we apply the same argument as before:
$$(2q - 2j(n))m \ge \frac{n}{4} - 2j(n) > \frac{n}{6}$$
for $2j(n) < \frac{n}{12}$, so if $l$ is the smallest positive integer so that $(1+2l)m > \frac{n}{6}$, we have that $1+2l \le 2q - 2j(n) + 1$. Then $2j(n)m < \frac{j(n)}{N} n < \frac{n}{3}$, so as before,
$$\{ (1+2k)q^{-1}: l \le k \le l + j(n) - 1\} \subset S_q \cap S_r$$
gives a good progression of length $j(n)$.

If $\gcd(q,n) = 2$, recall that $z \equiv (\frac{q}{2})^{-1}$ mod $\frac{n}{2}$, so $zr \equiv -m$ mod $\frac{n}{2}$ implies $r \equiv -m\frac{q}{2}$ mod $\frac{n}{2}$. Given that $r \in (\frac{2n}{3}, n-q)$ and $q \in (0, \frac{n}{3})$, we can immediately rule out $m \le 2$. If $m = 3$, we must have $r = n - \frac{3}{2} q$, so $p = \frac{q}{2}$, and then by Proposition \ref{usableunits}, $S_p = S_p \cap S_q$ must contain a usable unit. Similarly, if $m = 4$, then we must have $r = n - 2q$, i.e., $p = q$, violating the condition $\gcd(p,q,n) = 1$. 
So it suffices to consider the case $m \ge 5$. In this case
$$(q - 2j(n))m \ge 5\left( \frac{n}{8} - 2j(n) \right) > \frac{n}{6}$$
as long as $10j(n) < \frac{11n}{24}$, so by the same argument as in the previous paragraph, $S_q' \cap S_r$ contains a good progression of the form $(1+2k)z$ of length $j(n)$, and therefore a usable unit.
\end{proof}

\begin{prop}
\label{eightfour}
The case $\alpha = 1, \beta = 2, x > \frac{\alpha}{\beta}$. In this case, we use $j(n) < \frac{n}{60}$.
\end{prop}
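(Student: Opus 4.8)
The plan is to reuse the ``prototype'' odd-multiple argument of Proposition \ref{eightthree}, adapted to the fact that now $x>\frac12$. Writing $[q^{-1}r]_n$ (resp. $[zr]_n$) as $\frac n2+m$ with $0<m<\frac n{2N}$, the key observation is that the odd multiples satisfy $(1+2k)q^{-1}r\equiv\frac n2+(1+2k)m\pmod n$ (and likewise for $z$), so such an element lands in $S_r$ exactly when the accumulated quantity $(1+2k)m$ has grown into the window $[\frac n2,\,2r-\frac n2)$, whose width is $2r-n>\frac n3$. Since the coefficients $1+2k$ form a good progression (common difference $2$, and $\gcd(1,2,n)=1$), a run of $j(n)$ consecutive odd multiples landing in the target zone yields a usable unit in $S_q\cap S_r$ (resp. $S_q'\cap S_r$), just as in Lemma \ref{rationalapprox}.

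First I would clear away the easy and trivial pieces. If $r>\frac{3n}4+\frac m2$ then $\frac n2+m<2r-n$ already, so $q^{-1}$ (resp. $z$) is itself a usable unit in the intersection; hence I may assume $r\le\frac{3n}4$, which forces $p+q\ge\frac n4$ and therefore $q>\frac n8$. When $\gcd(q,n)=1$ and $n$ is odd, $2q^{-1}$ is a unit and $2q^{-1}r\equiv 2m\in[0,\tfrac n3)\subset[0,2r-n)$, so $2q^{-1}$ is the desired usable unit and one is done at once; this reduces the coprime case to $n$ even, where $m$ is an integer.

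The core is then the progression estimate. Taking $l$ least with $(1+2l)m>\frac n2$, the preceding odd multiple gives $1+2l\le\frac n{2m}+2$, while the error built up over the next $j(n)$ terms is $2j(n)m<\frac{j(n)}{N}n<\frac n{12}<2r-n$, so all of $(1+2k)m$ for $l\le k\le l+j(n)-1$ stay inside the window. The one thing to verify is that the largest coefficient $1+2(l+j(n)-1)\le\frac n{2m}+2j(n)$ stays below $2q$ in the coprime case, and below $q$ in the case $\gcd(q,n)=2$ (where one must work inside $S_q'$). Using $q>\frac n8$, this is an inequality of the shape $\frac n{2m}+2j(n)<q$, which holds once $m$ is bounded below; pushing the constants through the tighter $\gcd(q,n)=2$ subcase, where the budget is only $q$ and the window floor $\frac n2$ is high, is what produces the requirement $j(n)<\frac n{60}$ rather than the $\frac n{24}$ of Proposition \ref{eightthree}.

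The hard part will be disposing of the finitely many small values of $m$ for which $\frac n{2m}$ exceeds the coefficient budget, so that the progression cannot reach the window at all. Here the resonance relation $qm\equiv 2r\pmod n$ (equivalently $r\equiv\frac{qm}2\pmod{\frac n2}$ when $\gcd(q,n)=2$) pins each small $m$ to a configuration $r=(2m+1)q+p$ in the coprime case and $r=(m+1)q+p$ in the $\gcd 2$ case. I would treat these one at a time as in Proposition \ref{eightthree}: the value $m=1$ (coprime) or $m=2$ ($\gcd 2$) yields $r=3q+p$, covered by Proposition \ref{threeq} (a genuine exception precisely when $p\in\{1,2,4\}$); several values are impossible because $r<\frac{3n}4$ and $p\le q$ force a contradiction; and the remaining resonant cases (notably $m=3$, i.e. $r=4q+p$, in the $\gcd 2$ setting) carry a forced common factor, so that $\gcd(p,q)>1$ or a prime $l>2$ divides $\gcd(p,n)$, and Corollary \ref{factor} or Lemma \ref{gcdqnnottwo} then supplies a usable unit in $S_p\cap S_q$ or $S_p\cap S_r$. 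Managing this resonance bookkeeping cleanly against the narrow budget of the $\gcd(q,n)=2$ case is the delicate step, and it is what both drives the case analysis and explains the stronger bound $j(n)<\frac n{60}$ needed here.
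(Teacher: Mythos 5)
Your skeleton in the coprime case matches the paper's: reduce to $r$ not too large, dispose of odd $n$ via $2q^{-1}$, run the odd-coefficient progression $(1+2k)q^{-1}$ once $m$ is bounded below, and treat the small resonant values of $m$ by hand. But there are two substantive discrepancies.

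First, the constant. In the paper the bound $j(n)<\frac{n}{60}$ arises entirely in the $\gcd(q,n)=1$, $m\ge 3$ step: with $r<\frac{4n}{5}$ one has $2q>\frac{n}{5}$, and $(2q-2j(n))m>3(\frac{n}{5}-2j(n))>\frac{n}{2}$ forces $6j(n)<\frac{n}{10}$. The $\gcd(q,n)=2$ case is not handled by a progression at all: the paper simply observes that $z+\frac{n}{2}$ (if $\frac{n}{2}$ is even) or $2z+\frac{n}{2}$ (if $\frac{n}{2}$ is odd) is already a usable unit of $S_q\cap S_r$, because adding $\frac{n}{2}$ cancels the offending $\frac{n}{2}$ in $[zr]_n=\frac{n}{2}+m$. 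Your plan to push the progression through the $\gcd(q,n)=2$ case with budget $q$ and $q>\frac{n}{8}$ would need $m\ge 5$ and yields a constant closer to $\frac{n}{80}$, so it does not actually deliver the stated $\frac{n}{60}$; it also saddles you with extra small-$m$ cases that the direct construction avoids entirely.

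Second, and more seriously, your disposal of the small resonant $m$ has a gap. For $m=2$ in the coprime case one gets $n=6q+2p$, $r=5q+p$, and $q^{-1}$ itself works unless $q\le p+2$. The three subcases $q=p,\,p+1,\,p+2$ must then be settled, and for $q=p+1$ with $3\nmid p$ one has $\gcd(p,q)=1$ and $\gcd(p,n)=2$, while for $q=p+2$ with $3\nmid p$ one has $\gcd(p,n)=1$; in neither situation is there a ``forced common factor,'' so Corollary \ref{factor} and Lemma \ref{gcdqnnottwo} are both inapplicable, contrary to your claim. The paper resolves exactly these subcases by solving explicitly for $d\equiv a^{-1}$ (respectively $p^{-1}$) modulo $n$, enumerating the possible residues of the form $\frac{kn}{6}-\frac{8}{3}$ (respectively $\frac{kn}{12}-\frac{2}{3}$), and checking case by case that the resulting unit lands in $S_p\cap S_q$ or $S_p\cap S_r$. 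That explicit computation is the genuinely delicate content of this proposition and is missing from your proposal.
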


\begin{proof}
We have $q^{-1}r/zr = \frac{n}{2} + m$, $0 < m < \frac{n}{2N}$. If $r \ge \frac{4n}{5}$, then $q^{-1}/z \in S_q \cap S_r$ is a usable unit, so we may assume $r < \frac{4n}{5}$. Also, if $n$ is odd, $2q^{-1} \in S_q \cap S_r$ is a usable unit, so we may assume $n$ is even, and so $m \in \Z$. As usual, we break into cases at this point.

Suppose $\gcd(q,n)= 1$. Then $q^{-1}r = \frac{n}{2} + 1$ implies $r = 3q + p$, which is covered by Proposition \ref{threeq}. The case $m = 2$ will require more work: first of all, this implies $n = 6q + 2p, r = 5q + p$, so $2r -n > \frac{n}{2} + 2$ and $q^{-1} \in S_q \cap S_r$ unless $q \le p + 2$.
\begin{itemize}
\item $q = p$: $S_p = S_p \cap S_q$ contains a usable unit by Proposition \ref{usableunits}. 
\item $q = p+1$: $n = 8p + 6$, and $q$ is odd, so $p$ must be even. If 3 divides $p$, then $\gcd(p,n) = 6$, and so $S_p \cap S_r$ contains a usable unit by Lemma \ref{gcdqnnottwo}. Otherwise, we have $\gcd(p,n) = 2$. We write $p = 2a, n = 2c$ and choose $d$ to be coprime to $n$ and $\equiv a^{-1}$ mod $c$. Then
$$d(4p+3) \equiv 0 \mod \frac{n}{2} \implies d = -\frac{8}{3} + \frac{kn}{6}$$
for some $k = 1,\ldots, 6$. As $d$ is an integer, we rule out $k = 3,6$ (since then $d$ would have fractional part $\frac{1}{3}$), and as $d$ is odd, we additionally rule out $k = 2,4$ (since then $\frac{n-8}{3}, \frac{2n-8}{3}$ are even if they are integers). If $k = 1$,
$$dq = d(p+1) \equiv 2 + d = \frac{n}{6} - \frac{2}{3} < 2q \implies d \in S_p \cap S_q.$$
And if $k = 5$,
$$dr = d(6p + 5) \equiv 12 + 5d \equiv \frac{n}{6} - \frac{4}{3} < 2r-n \implies d \in S_p \cap S_r.$$
So one of $S_p \cap S_q$ and $S_p \cap S_r$ contains the usable unit $d$.
\item $q = p+2$: This case is very similar to the previous one, but now $n = 8p + 12$ and $p$ is odd, so if 3 divides $p$, $\gcd(p,n) = 3$, and $S_p \cap S_r$ contains a usable unit by Lemma \ref{gcdqnnottwo}. Otherwise $\gcd(p,n) = 1$, and $p^{-1}(8p+12) \equiv 0$ mod $n$ implies $p^{-1} = -\frac{2}{3} + \frac{kn}{12}$ for some $k = 1, \ldots, 12$. As $p^{-1}$ must be an odd integer, $k$ cannot be divisible by 2 or 3, so the possible values of $k$ are $1,5,7,11$. If $k = 1,7$,
$$p^{-1}q = p^{-1}(p+2) \equiv \frac{n}{6} - \frac{1}{3} < 2q \implies p^{-1} \in S_p \cap S_q.$$
If $k = 5,11$,
$$p^{-1}r = p^{-1}(6p+10) \equiv \frac{n}{6} - \frac{2}{3} < 2r - n \implies p^{-1} \in S_p \cap S_r.$$
%so one of these two intersections contains a usable unit.
\end{itemize}

So we can assume $m \ge 3$ (still in the case $\gcd(q,n) = 1$). Now
$$(2q - 2j(n))m > 3\left(\frac{n}{5} - 2j(n)\right) > \frac{n}{2}$$
if $6j(n) < \frac{n}{10}$. By the usual argument, we have a good sequence of the form $(1+2k)q^{-1}$ of length $j(n)$ in $S_q \cap S_r$ and hence a usable unit.

Now suppose $\gcd(q,n) = 2$. First of all, if $\frac{n}{2}$ is even, then $z + \frac{n}{2} \in S_q \cap S_r$ is a usable unit. On the other hand, if $\frac{n}{2}$ is odd, $2z + \frac{n}{2}$ is also a usable unit, as it is odd and relatively prime to $\frac{n}{2}$ (and $2 < \frac{q}{2} = z^{-1}$ mod $\frac{n}{2}$, so it is usable), and
$$(2z + \frac{n}{2})r \equiv 2zr + \frac{n}{2} \equiv 2m < \frac{n}{N} < \frac{n}{3}$$
so $2z + \frac{n}{2} \in S_q \cap S_r$ is a usable unit.
\end{proof}

\begin{prop}
The case $\alpha = 2, \beta = 3, x \le \frac{\alpha}{\beta}$. In this case, we use $j(n) < \frac{\sqrt{n}}{6}$.
\end{prop}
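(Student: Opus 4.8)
The plan is to exploit the fact that, because here we \emph{under}-approximate $\frac{2}{3}$, doubling $q^{-1}$ (or $z$) lands essentially at $\frac{n}{3}$, which sits at the very edge of the target zone $[0,2r-n)$ for $S_r$: recall $r > \frac{2n}{3}$ gives $2r-n > \frac{n}{3}$, so $[0,2r-n) \supset [0,\frac{n}{3}]$. Thus, unlike the over-approximation cases (e.g.\ Proposition \ref{eightfour}), no error needs to accumulate, and the only difficulty is that $2q^{-1}$ itself may fail to be a unit. I would resolve this by the usual good-progression argument with common difference $3q^{-1}$, and I would handle $\gcd(q,n)=1$ and $\gcd(q,n)=2$ uniformly (replacing $q^{-1}$ by $z$ and $S_q$ by $S_q'$ in the latter).

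Concretely, writing $[q^{-1}r]_n = \frac{2n}{3} - m$ with $0 \le m < \frac{n}{3N}$ (and the analogous statement for $z$), I would consider the residues $(2+3k)q^{-1}$ for $k = 0, 1, \dots, j(n)-1$. Since $(2+3k)\cdot\frac{2n}{3} \equiv \frac{n}{3} \pmod n$, a one-line computation gives
$$(2+3k)q^{-1}r \equiv \frac{n}{3} - (2+3k)m \pmod n.$$
As $N > 12 j(n)$ forces $m < \frac{n}{36 j(n)}$, the accumulated drift satisfies $(2+3k)m \le (3j(n)-1)m < \frac{n}{12}$, so each of these values lies in $(\frac{n}{4}, \frac{n}{3}] \subset [0, 2r-n)$, placing all $j(n)$ residues in $S_r$.

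For membership in $S_q$ I would invoke the proposition-specific bound $j(n) < \frac{\sqrt n}{6}$: it gives $2+3k \le 3j(n)-1 < \frac{\sqrt n}{2} < q$, so each index is below $2q$ (and below $q$, as needed for $S_q'$ when $\gcd(q,n)=2$). Finally, since $q^{-1}$ (resp.\ $z$) is a unit, $\{(2+3k)q^{-1}\}$ is a good progression: any prime dividing both $2q^{-1}$ and $3q^{-1}$ modulo $n$ would divide their difference $q^{-1}$, contradicting $\gcd(q^{-1},n)=1$, so $\gcd(2q^{-1},3q^{-1},n)=1$. Hence these $j(n)$ consecutive terms contain a unit, which lies in $S_q \cap S_r$ (resp.\ $S_q' \cap S_r$) and is usable, the latter set having been shown to contain no unusable unit at the start of the proof of Lemma \ref{rationalapprox}.

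I expect no real obstacle: this is one of the benign cases, where the target fraction $\frac{2}{3}$ lands inside the zone by the slimmest of margins ($2r-n > \frac{n}{3}$) after doubling, so the argument reduces to bookkeeping rather than the delicate balancing of Propositions \ref{eightthree} and \ref{eightfour}. The only points requiring care are confirming that the drift bound keeps all $j(n)$ terms inside $[0,2r-n)$, and checking that the uniform treatment genuinely covers both gcd cases without the kind of small-$m$ exceptions ($r = 3q+p$, $p = \frac{q}{2}$, etc.) that arose for $\beta = 2$; I anticipate none appear here precisely because we never need $m$ to be bounded below.
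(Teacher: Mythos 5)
Your proposal is correct and follows essentially the same route as the paper: write $q^{-1}r/zr \equiv \frac{2n}{3}-m$, take the progression $(2+3k)q^{-1}$ (resp.\ $z$) for $0 \le k < j(n)$, use $3j(n) < \frac{\sqrt n}{2} < q$ for membership in $S_q$ (resp.\ $S_q'$) and the drift bound $3j(n)m < \frac{n}{3}$ for membership in $S_r$. Your additional checks (the good-progression gcd condition and the sharper drift estimate) are correct but only make explicit what the paper leaves implicit.
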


\begin{proof}
As usual, we write $q^{-1}r/zr = \frac{2n}{3} - m$ for some $0 \le m < \frac{n}{3N}$. In this case, we will show
$$\{(2+3k)q^{-1}/z: 0 \le k \le j(n) - 1\} \subset S_q \cap S_r.$$
As $3j(n) < \frac{\sqrt{n}}{2} < q$, this is in $S_q$. And $(2+3k)q^{-1}r/zr \equiv \frac{n}{3} - (2+3k)m$, so for these to be in $S_r$, it suffices that $3j(n) m < \frac{n}{3}$. So we have a good progression of length $j(n)$ in $S_q \cap S_r$.
\end{proof}

\begin{prop}
The case $\alpha = 2, \beta = 3, x > \frac{2}{3}$. In this case, we use $j(n) < \frac{\sqrt{n}}{6}$ and $j(n) < \frac{n}{24}$.
\end{prop}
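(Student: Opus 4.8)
The plan is to mirror the two templates already used in this section: a \emph{direct} progression whose terms sit just above $\frac{n}{3}$, and a \emph{wrap-around} progression (as in the $\alpha=1,\beta=3$ case) that accumulates error until it crosses $n$ and drops into the target zone. Writing $q^{-1}r\equiv\frac{2n}{3}+m$ (resp. $zr\equiv\frac{2n}{3}+m$) with $0<m<\frac{n}{3N}$, I would first dispose of two cheap reductions. If $3\nmid n$ then $3q^{-1}$ (resp. $3z$) is a unit with $3q^{-1}r\equiv 3m<\frac{n}{N}<\frac{n}{3}<2r-n$, so it lies in $S_q\cap S_r$; hence we may assume $3\mid n$, set $n=3n'$, and note $m\in\Z$, $m\ge 1$. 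If moreover $r\ge\frac{17n}{24}$ (equivalently $2r-n\ge\frac{5n}{12}$), the direct progression $\{(2+3k)q^{-1}:0\le k<j(n)\}$ works by a crude estimate: its values are $\frac{n}{3}+(2+3k)m$, all bounded by $\frac{n}{3}+3j(n)m<\frac{n}{3}+\frac{n}{12}=\frac{5n}{12}\le 2r-n$, while the coefficients stay below $3j(n)<\frac{\sqrt n}{2}<2q$, giving a good progression of length $j(n)$ and hence a usable unit.

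This leaves the window $\frac{2n}{3}<r<\frac{17n}{24}$, where the room in $S_r$ is too small for a blanket bound, so the point is to exploit the link between $m$ and $r$. From $q^{-1}r\equiv\frac{2n}{3}+m$ one gets $r\equiv qm\pmod{n'}$; writing $r=2n'+s$ with $s=r-2n'\in(0,\frac{n'}{8})$ yields $2r-n=n'+2s$ and $s\equiv qm\pmod{n'}$. I would then split on the size of $qm$. If $qm<n'$, then $s=qm$, so $2r-n=\frac{n}{3}+2qm$, and the direct progression works because its largest value $\frac{n}{3}+(3j(n)-1)m$ is below $\frac{n}{3}+2qm$ (using $3j(n)-1<2q$). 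If instead $qm\ge n'$, the wrap-around progression $\{(1+3k)q^{-1}\}$ is available: the values $\frac{2n}{3}+(1+3k)m$ cross $n$ and land at $(1+3k)m-n'\in[0,n'+2s)=[0,2r-n)$, the window having width $>n'>3j(n)m$ and so holding $j(n)$ consecutive terms, while $qm\ge n'$ forces the first wrapping coefficient $\approx\frac{n'}{m}\le q$, so the largest coefficient $\frac{n'}{m}+3j(n)<2q$ stays in $S_q$. Since these two cases exhaust the window, the $\gcd(q,n)=1$ situation is settled.

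For $\gcd(q,n)=2$ the same dichotomy applies with $z$ in place of $q^{-1}$ and $S_q'=\{kz:0\le k<q\}$, whose coefficient bound is $q$ rather than $2q$. The direct branch is unaffected (as $3j(n)<q$), but the wrap branch is delicate: when $qm$ only barely exceeds $n'$, the first wrapping coefficient $\approx\frac{n'}{m}$ sits just below $q$, so adjoining the extra $3j(n)$ needed for a full Jacobsthal block can push the largest coefficient past $q$ and out of $S_q'$. \textbf{This narrow intermediate range $qm\gtrsim n'$ is the main obstacle}, and it is where the second bound $j(n)<\frac{n}{24}$ enters; I would treat it by a separate short argument, either by placing the $j(n)$ terms at the low end of the (still wide) target window to minimize the largest coefficient, or by extracting an alternative usable unit from the residue $z+\frac{n}{2}$ available when $n$ is even, as in the $\gcd(q,n)=2$ analysis of Proposition \ref{eightfour}. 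Apart from this sliver, every configuration with $\alpha=2,\beta=3,x>\frac23$ produces a usable unit in $S_q\cap S_r$ (or $S_q'\cap S_r$); notably, unlike the $\beta=2$ cases, no exceptional family $r=3q+p$ intrudes here, since that family has $q^{-1}r\equiv\frac{n}{2}+1$ and hence $x\approx\frac12$.
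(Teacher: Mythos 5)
Your reductions ($3\nmid n$ handled by $3q^{-1}/3z$, and the direct progression $\{(2+3k)q^{-1}\}$ for $r\ge\frac{17n}{24}$) match the paper exactly, and your treatment of $\gcd(q,n)=1$ in the window $\frac{2n}{3}<r<\frac{17n}{24}$ is sound and is essentially the paper's ``usual argument,'' just organized differently: you split on $qm<n'$ versus $qm\ge n'$ (where $n'=\frac{n}{3}$), whereas the paper disposes of $m=1$ by showing it forces $r=\frac{2n}{3}+q>\frac{3n}{4}$, contradicting $r<\frac{17n}{24}$, and then runs the wrap-around progression $(1+3k)q^{-1}$ for $m\ge2$ using $(2q-3j(n))m>2\left(\frac{7n}{24}-3j(n)\right)>\frac{n}{3}$ --- which is where the hypothesis $j(n)<\frac{n}{24}$ actually enters. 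Incidentally, your branch $qm<n'$ is vacuous: it forces $qm=s<\frac{n'}{8}$, while $q>\frac{7n}{48}=\frac{7n'}{16}$ then gives $m<\frac{2}{7}<1$.

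The genuine gap is the one you flag yourself: $\gcd(q,n)=2$ with $r<\frac{17n}{24}$. Your first proposed fix (placing the Jacobsthal block at the low end of the target window) cannot work, because the wrap-around construction already uses the smallest wrapping coefficient, roughly $n'/m$; when $qm$ barely exceeds $n'$ this is already roughly $q$, leaving no room for the $3j(n)$ further coefficients, all of which must stay below $q$ to remain in $S_q'$. Your second suggestion is the right one and is what the paper does, but it needs to be carried out, and it in fact dispatches the entire $\gcd(q,n)=2$ case in one stroke with a single unit rather than a progression: since $r$ is odd, $\frac{n}{2}r\equiv\frac{n}{2}$ and $\frac{2n}{3}+\frac{n}{2}\equiv\frac{n}{6}$, so $(z+\frac{n}{2})r\equiv\frac{n}{6}+m<\frac{n}{3}<2r-n$; one takes $z+\frac{n}{2}$ when $\frac{n}{2}$ is even (so that it is odd, hence a unit) and $4z+\frac{n}{2}$ when $\frac{n}{2}$ is odd, in which case $(4z+\frac{n}{2})r\equiv\frac{n}{6}+4m$ is still below $\frac{n}{3}$. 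Note also that in the $\gcd(q,n)=2$ setting your congruence $r\equiv qm\pmod{n'}$ should read $2r\equiv qm\pmod{n'}$, since $zq\equiv 2$ rather than $1$ mod $n$, so even the branch you call ``unaffected'' would need restating.
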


\begin{proof}
We write $q^{-1}r/zr = \frac{2n}{3} + m$ for some $0 < m < \frac{n}{3N}$. If 3 does not divide $n$, then $3q^{-1}/3z$ is a usable unit in $S_q \cap S_r$, so we can assume 3 divides $n$, and so $m \in \Z$. We start with $\gcd(q,n) = 1$, splitting into cases based on the size of $r$.

If $r \ge \frac{17n}{24}$, we will show that $\{(2+3k)q^{-1}/z: 0 \le k \le j(n)-1\}$ is in $S_q \cap S_r$. For $S_q$, it suffices that
$3j(n) < q$. For $S_r$, we have
$$(2+3k)q^{-1}r/zr \equiv \frac{n}{3} + (2+3k)m < \frac{n}{3} + 3j(n)m < \frac{5n}{12} \le 2r - n$$
as $N > 12j(n)$. So $S_q \cap S_r$ has a good progression of length $j(n)$.

If $r < \frac{17n}{24}$ (and so $q > \frac{7n}{48}$), we split into cases based on $\gcd(q,n)$. If $\gcd(q,n) = 1$, $m = 1$ implies $r \equiv q$ mod $\frac{n}{3}$, and $q < \frac{n}{3}$ implies $r = \frac{2n}{3} + q = 5q + 2p > \frac{3n}{4}$, so in this case it suffices to consider $m \ge 2$. Then
$$(2q - 3j(n))m > 2 \left( \frac{7n}{24} - 3j(n)\right) > \frac{n}{3}$$
if $6j(n) < \frac{n}{4}$, and $3j(n) m < \frac{n}{3}$, so by the usual argument, there is a good progression of the form $(1+3k)q^{-1}$ of length $j(n)$ in $S_q \cap S_r$.

Now, suppose $\gcd(q,n) = 2$. If $\frac{n}{2}$ is even, $z + \frac{n}{2}$ is also a usable unit in $S_q$, which is also in $S_r$ as
$$(z + \frac{n}{2})r \equiv zr + \frac{n}{2} \equiv \frac{n}{6} + m < \frac{n}{6} + \frac{n}{3N} < \frac{n}{3}$$
On the other hand, if $\frac{n}{2}$ is odd, $4z + \frac{n}{2}$ is a usable unit in $S_q$, which is also in $S_r$ as
$$(4z + \frac{n}{2})r \equiv 4zr + \frac{n}{2} \equiv \frac{n}{6} + 4m < \frac{n}{6} + \frac{4n}{3N} < \frac{n}{3}$$
(since $N > 12j(n) \ge 24$).
\end{proof}

\begin{prop}
\label{eightseven}
The case $\alpha = \beta$. In this case, we use $j(n) < \frac{3\sqrt{n}}{14}$.
\end{prop}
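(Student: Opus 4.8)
Since Dirichlet's theorem returns coprime $\alpha,\beta$, the case $\alpha=\beta$ is really just $\alpha=\beta=1$, i.e. $x$ lies in $(1-\tfrac{1}{N},1)$. Writing $[q^{-1}r]_n=n-m$ (resp. $[zr]_n=n-m$ in the $\gcd(q,n)=2$ case), we have $1\le m<n/N$, and the plan is to exhibit a good progression of length $j(n)$ inside $S_q\cap S_r$ (resp. $S_q'\cap S_r$) whose members are $cq^{-1}$ (resp. $cz$) for $c$ running over $j(n)$ consecutive integers. First I would convert the two membership conditions into statements about $m$. Multiplying $q^{-1}r\equiv-m$ by $q$ gives $[mq]_n=[-r]_n=n-r=p+q$; in the $\gcd(q,n)=2$ case, using $zq\equiv2\pmod n$ (which follows from $z\equiv(q/2)^{-1}\bmod n/2$) gives instead $[mq]_n=[-2r]_n=2(n-r)=2(p+q)$. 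Likewise $[cq^{-1}r]_n=[-cm]_n$, so $cq^{-1}\in S_r$ (resp. $cz\in S_r$) exactly when $[cm]_n>2(p+q)$, i.e. when $cm\bmod n$ lands in $(2(p+q),n)$; and $cq^{-1}\in S_q$ (resp. $cz\in S_q'$) exactly when $c<2q$ (resp. $c<q$).

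Next I would dispose of the small values of $m$, which are precisely the ones with $mq<n$. The relation $[mq]_n=p+q$ (resp. $2(p+q)$) shows that $m=1$ is impossible, and that whenever $mq<n$ (so $[mq]_n=mq$) one is forced into an already-handled configuration: in the $\gcd(q,n)=1$ case $m=2$ gives $p=q$, so $S_p=S_q$ already contains a usable unit by Proposition~\ref{usableunits} (as $q>\sqrt n/2$ rules out $p\in\{1,2,4\}$); in the $\gcd(q,n)=2$ case $m=1,2,4$ are impossible and $m=3$ forces $p=q/2$, whence $S_p\cap S_q$ contains a usable unit by Corollary~\ref{factor} (here $p=q/2>1$). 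In every remaining case $mq>n$ (note $mq=n$ cannot occur, since $\gcd(q,n)\le2<q$).

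Finally, assuming $mq>n$, the first block of coefficients $c\in(2(p+q)/m,\,n/m)$ has $cm\in(2(p+q),n)$ with no wraparound, so every corresponding $cq^{-1}$ (resp. $cz$) lies in $S_r$; and $mq>n$ gives $n/m<q$, so the whole block lies below the $S_q$-cutoff and its elements lie in $S_q$ (resp. $S_q'$). Its length is $(n-2(p+q))/m=(2r-n)/m$, and since $2r-n>n/3$ and $m<n/N<n/(12j(n))$ this exceeds $4j(n)$; hence the block contains $j(n)$ consecutive integer coefficients, one of which is coprime to $n$ by the definition of $j(n)$. The corresponding $cq^{-1}$ (resp. $cz$) is then a unit in $S_q\cap S_r$ (resp. $S_q'\cap S_r$), and it is usable because, as noted at the start of the proof of Lemma~\ref{rationalapprox}, these intersections contain no unusable unit. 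The inequality $j(n)<3\sqrt n/14$ is the (deliberately non-optimal) bound recorded here under which a block of $j(n)$ good coefficients is guaranteed to fit below the cutoff; the main subtlety is the reduction to $mq>n$, since only that both places the good block below the $S_q$-cutoff and isolates the finitely many exceptional triangles $p=q$ and $p=q/2$.
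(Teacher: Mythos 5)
Your proof is correct, and it shares the paper's overall skeleton (write $[q^{-1}r]_n=n-m$, translate via multiplication by $q$ into $[mq]_n=p+q$ resp.\ $2(p+q)$, dispose of the degenerate small-$m$ configurations $p=q$ and $p=q/2$, then exhibit $j(n)$ consecutive coefficients $c$ with $cq^{-1}$ resp.\ $cz$ in the intersection), but the key quantitative step is genuinely different. The paper establishes the lower bounds $m\ge 4$ (resp.\ $m\ge 7$, using the parity of $m$ and a separate exclusion of $m=5$) and uses them to show that the \emph{first} index $l$ with $lm>2(p+q)$ satisfies $l\le 2q-j(n)$; this is exactly where the recorded hypothesis $j(n)<\frac{3\sqrt{n}}{14}$ (i.e.\ $7j(n)<3q$ with $q>\frac{\sqrt{n}}{2}$) enters. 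You instead observe that every surviving case has $mq>n$ (since $mq<n$ forces $m\le 2$ resp.\ $m\le 4$ and hence a handled configuration, and $mq=n$ would contradict $\gcd(q,n)\le 2$), so the \emph{entire} block $c\in(2(p+q)/m,\,n/m)$ sits below $n/m<q$ and lies in $S_q$ (resp.\ $S_q'$) automatically; the only bound you then need is $j(n)m<2r-n$, which already follows from $N>12j(n)$. Your version is cleaner: it avoids the parity analysis and the ad hoc exclusion of $m=5$ in the $\gcd(q,n)=2$ case, and it transfers intact to the reuse of this proposition in \S 9, where the upper bound on $m$ is relaxed and $j(n)m$ small relative to $2r-n$ is precisely what is re-verified. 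The one mismatch is cosmetic rather than logical: your closing sentence attributes the bound $j(n)<\frac{3\sqrt{n}}{14}$ to fitting the block below the cutoff, but in your argument that step is free and the bound is never actually used, whereas the paper does use it; you have simply proved the statement under weaker hypotheses.
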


\begin{proof}
We write $q^{-1}r/zr = n - m$ for some $0 < m < \frac{n}{N}$.

Suppose $\gcd(q,n) = 1$. Then $p+q \equiv mq$, so $m = 1$ is impossible, and $m = 2$ implies $p = q$, so $S_q = S_p \cap S_q$ contains the usable unit $q^{-1}$. Similarly, $m = 3$ is impossible, as $p+q \le 2q < 3q < n$. So we have
$$(2q - j(n))m \ge 4(2q - j(n)) > 4q \ge 2p + 2q$$
if $j(n) < q$. Then if $l$ is the smallest positive integer such that $[lq^{-1}r]_n < [2r]_n = n-2p-2q$, we have $l \le 2q - j(n)$, and $j(n) m < \frac{n}{3}$, so by the usual argument there is a good progression $\{kq^{-1}: l \le k \le l + j(n) - 1\} \subset S_q \cap S_r$.

Suppose $\gcd(q,n) = 2$. Then $zr$ is odd, so $m$ must be odd. By the same argument as in the proof of Proposition \ref{eightthree}, we must have $m \ge 5$. Furthermore $m \ne 5$, as this would imply $p \equiv \frac{3q}{2}$ mod $\frac{n}{2}$, but $p \le q$ and $q < \frac{n}{3}$ make this impossible. So $m \ge 7$, and
$$(q - j(n))m \ge 7q - 7j(n) > 4q \ge 2p + 2q$$
if $7j(n) < 3q$. Then, as in the previous paragraph, there is a good progression of the form $kq^{-1}$ of length $j(n)$ in $S_q' \cap S_r$.
\end{proof}

The most restrictive bounds needed in these propositions were $j(n) < \frac{\sqrt{n}}{6}$ and $j(n) < \frac{n}{216}$. These are certainly satisfied under the conditions of Lemma \ref{rationalapprox}, but this information will be useful in the next section.

\section{Computer verification}
At this point, we have proved Theorem \ref{maintwo} up to finitely many exceptions. We will see what remains to be checked, give a computer-assisted proof that vastly less must actually be checked, and then present the results of the checking. (We start with the assumption that we will check all triples with $n \le 10000$.)

Recall that we split the proof into major cases $q \le \frac{\sqrt{n}}{2}$ and $q > \frac{\sqrt{n}}{2}$. When $q \le \frac{\sqrt{n}}{2}$, Lemmas \ref{qsmall} and \ref{qsmallpbad} prove the theorem for $n \ge 30$. When $q > \frac{\sqrt{n}}{2}$ and $\gcd(q,n) > 2$, Lemma \ref{qgcdbig} proves the theorem for $n \ge 1024$, and when $q > \frac{\sqrt{n}}{2}$ and $\gcd(q,n) \le 2$, Lemma \ref{rationalapprox} proves the theorem for $n > 10000$ and $24 j(n)^2 < \sqrt{n}$. So the only case in which checking triples up to $n = 10000$ will not complete the proof is the case $q > \frac{\sqrt{n}}{2}, \gcd(q,n) \le 2$.

\begin{rem}
By the bounds on the Jacobsthal function introduced in \S7,
$$j(n)^2 < 4^{1.4 \frac{\ln n}{\ln \ln n}} = n^{\frac{1.4 \ln 4}{\ln \ln n}}$$
(which we are comparing to $\frac{1}{24}\sqrt{n}$).
\end{rem}

We write $p_m\#$ for the product of the first $m$ primes and $\omega(m)$ for the number of distinct prime factors of $m$. Values of the function
$$H(n) := \max_{\omega(m) = n} j(m)$$
for $n \le 24$ have been computed by Hajdu and Saradha \cite{HS}. We will use their computed values, along with the obvious observation that $\omega(n) < m$ for $n < p_m\#$, to obtain bounds on $j(n)$ for small $n$ better than those presented in \S7. 
(From now on, every statement we make about the maximum value of $j(n)$ in some particular range can be attributed to their computations.)

\begin{rem}
\label{ninetwo}
For $n \ge p_{25}\# > 2.3 \times 10^{36}$, we have $\ln \ln n > 4.4$, and so
$$j(n)^2 < n^{\frac{1.4 \ln 4}{\ln \ln n}} < n^{2/4.4} < \frac{1}{24} n^{1/2}.$$
So our bound holds for $n \ge p_{25}\#$. And for $n < p_{25}\#$, we have $\omega(n) \le 24$ and therefore $j(n) \le 236$. Then for $n > 24^2 (236)^4$ this bound holds, and for $n \le 24^2 (236)^4$ we have $\omega(n) \le 11$, and therefore $j(n) \le 58$. So for $n > 24^2 (58)^4$ this bound holds, and for $n \le 24^2 (58)^4$ we have $\omega(n) \le 10$, so $j(n) \le 46$. Then this bound holds for $n > 24^2 (46)^4$, and for $n \le 24^2 (46)^4$ we have $\omega(n) \le 9$, so $j(n) \le 40$. Then this bound holds for $n > 24^2 (40)^4 = 1.47456 \times 10^9$, but the reduction process stops here, as we still have $\omega(n) \le 9$. 
\end{rem}

It therefore remains to check triples with $q > \frac{\sqrt{n}}{2}$ and $\gcd(q,n) \le 2$ up to $n = 1.47456 \times 10^9$. However, we will by no means check all of them individually.
Instead, we use the following strategy:
First of all, if we find some $k$ coprime to $n$ with $k < q$ so that $kq^{-1}r/zr$ is in $[0, \frac{n}{3}]$, $kq^{-1}/kz$ is a usable unit in $S_q \cap S_r$. As mentioned previously, any $n$ in this range has $\le 9$ prime factors, so if one can find 10 powers of distinct primes $c_1, \ldots, c_{10} < q$ so that $c_iq^{-1}r/c_izr \in [0, \frac{n}{3}]$, then at least one of the $c_i$ must be coprime to $n$, and so one of the $c_iq^{-1}/c_iz$ must be a usable unit in $S_q \cap S_r$. We will split the interval from 0 to $n$ into subintervals, the idea being to show that for almost all of the subintervals, if $q^{-1}r/zr$ is in this subinterval, then there are such $c_1, \ldots, c_{10}$. In order to get better bounds, we will divide into cases $r > \frac{4n}{5}$ and $\frac{2n}{3} < r \le \frac{4n}{5}$.

In the case $r > \frac{4n}{5}$, recall that by Propositions \ref{poweroftwo} and \ref{notfourdividesn} (the latter being applied with $m = 3$), if $\gcd(q,n) = 2$, then $S_q \cap S_r$ contains a usable unit. (As $q \ge \frac{\sqrt{n}}{2} > 50$ for $n > 10000$, the conditions $q > 2$ and $q > 8$ are satisfied.) So we may assume $\gcd(q,n) = 1$, and $S_q$ contains multiples of $q^{-1}$ up to $2q$. Now,
$[0, \frac{3n}{5}] \subset [0, 2r-n)$, so it suffices to use this larger interval. We will allow prime powers up to 80 (if $n \ge 6400$ and $q \ge \frac{\sqrt{n}}{2}$, then $2q \ge 80$). In this case, a computer check (partitioning $n$ into 10000 subintervals) shows that there are $c_1, \ldots, c_{10}$ unless $q^{-1}r$ is in $[\frac{4957n}{5000}, n)$. We note that this is the case considered in Proposition \ref{eightseven}, except that the upper bound on $m$ is replaced by $\frac{43n}{5000}$. However, the upper bound is only needed to ensure $j(n)m < \frac{3n}{5}$ (this being the $r > \frac{4n}{5}$ case), and as $j(n) \le 40$ given our bound on $n$, this is still satisfied. So we may apply the proposition to say that the \cite{MW} condition is satisfied for $q^{-1}r \in [\frac{4957n}{5000}, n)$ if $j(n) < \frac{3\sqrt{n}}{14}$. We will prove in Remark \ref{check} below that $j(n) < \frac{\sqrt{n}}{6}$ for $n \ge 10000$, so the case $q^{-1}r \in [\frac{4957n}{5000}, n)$ will not require any additional checking.
%\begin{rem}
%For $n > 1.47456 \times 10^9$ we have $j(n)^2 < \frac{\sqrt{n}}{24}$, and so $j(n) < \frac{\sqrt{n}}{5}$ certainly holds. For $n \le 2 \times 10^9$, $j(n) \le 40$, so for $n > 25(40)^2 = 40000$, this holds. Then for $n \le 40000$ we have $\omega(n) \le 6$, so $j(n) \le 22$. So for $n > 25(22)^2 = 12100$ this holds, and for $n \le 12100$, $\omega(n) \le 5$, so $j(n) \le 14$. So $j(n) < \frac{\sqrt{n}}{5}$ holds for $n > 25(14)^2 = 4900$, and $n \le 4900$ is in the range that we plan to check anyway.
%\end{rem}
%So in the $r > \frac{4n}{5}$ (and $n \ge 10000$) case, it suffices to check triples with $q < 80$ and $n < 25600$ and $\gcd(q,n) = 1,2$. But since in the $\gcd(q,n) = 1$ case we are allowed multiples of $q^{-1}$ up to $2q$, this proof works for $q \ge 40$, and $q \ge \frac{\sqrt{n}}{2}$ implies that this is satisfied when $n \ge 10000$, so one need only check the cases with $\gcd(q,n) = 2$.

Now we consider the case $\frac{2n}{3} < r \le \frac{4n}{5}$. Here $q \ge \frac{n}{10} \ge 1000$ in the range being considered, but we must use the smaller target interval $[0, \frac{n}{3}]$. Allowing prime powers up to 1000 and partitioning $n$ into 12000 subintervals, the computer test reveals that there are $c_1, \ldots, c_{10}$ unless
$$q^{-1}r/zr \in \left[\frac{n}{3}, \frac{4005n}{12000}\right) \cup \left[\frac{5997n}{12000}, \frac{6007n}{12000}\right) \cup \left[\frac{2n}{3}, \frac{8005n}{12000}\right) \cup \left[\frac{11991n}{12000}, n\right)$$
We see that these exactly correspond to the cases $x = \frac{1}{3} + \epsilon, \frac{1}{2} \pm \epsilon, \frac{2}{3} + \epsilon, 1 - \epsilon$ considered in \S8. As in the case $r > \frac{4n}{5}$, the propositions of \S8 apply to our situation, as the only difference is that the upper bound on $m$ is replaced by the bounds given here, but as $j(n) \le 40$, the strongest upper bound on $m$ needed in those propositions, $3j(n)m < \frac{n}{3}$, is satisfied for $q^{-1}r/zr$ in the ranges shown above. So we may apply the propositions of \S8 to say that for $q^{-1}r/zr$ in these ranges, the \cite{MW} criterion is satisfied if $j(n) < \frac{\sqrt{n}}{6}, \frac{n}{216}$. As $\frac{\sqrt{n}}{6} \le \frac{n}{216}$ for $n \ge 1296$, i.e., in the range we are considering, it suffices to see when $j(n) < \frac{\sqrt{n}}{6}$.

\begin{rem}
\label{check}
For $n > 1.47456 \times 10^9$, by Remark \ref{ninetwo}, we have $j(n)^2 < \frac{\sqrt{n}}{24}$, so $j(n) < \frac{\sqrt{n}}{6}$ holds. Then for $n \le 2 \times 10^9$, $j(n) \le 40$, so for $n > 36(40)^2 = 57600$, this holds. For $n \le 57600$, we have $\omega(n) \le 6$, so $j(n) \le 22$. So for $n > 36(22)^2 = 17424$ this holds, and for $n \le 17424$ we have $\omega(n) \le 5$, and so $j(n) \le 14$. So $j(n) < \frac{\sqrt{n}}{6}$ holds for $n > 36(14)^2 = 7056$, and $n \le 7056$ is in the range that we plan to check anyway.
\end{rem}

So there is no need for additional checking in the $\frac{2n}{3} < r \le \frac{4n}{5}$ case either, i.e., it suffices to check only $n \le 10000$.

Checking all triples in this range besides those belonging to the six exceptional families listed in Theorem \ref{maintwo}, we find the following additional triples for which the \cite{MW} criterion is not satisfied: $(1,4,11)$, $(1,3,16)$, $(2,3,17)$, $(1,4,21)$, $(1,8,19)$, $(3,8,29)$, $(2,11,29)$. This is the list which appears in item 7 in Theorem \ref{maintwo}. For a summary of known results about these families and exceptional cases, the reader is referred to \S4.

\section{The family $p = 1, q = 4, r \equiv 7$ mod 8}
In this section, we prove that the triangles in family 3 of Theorem \ref{maintwo} do not have the lattice property (excluding Hooper's triangle, which has $r = 7 < 8 = \frac{2}{3}n$). As families 1--2 are known to be families of lattice triangles, families 4--6 have $r < \frac{3n}{4}$, and the exceptional triangles in item 7 have been excluded by the computer program of R\"uth, Delecroix, and Eskin \cite{RDE}, the consequence of this section will be Theorem \ref{realmain}, the classification of rational obtuse lattice triangles with obtuse angle $\ge \frac{3\pi}{4}$.

Our first observation is that the unfoldings of triangles in this family have a very simple form: the unfolding is in the shape of an $n$-pointed star (cf. \cite[Figure 1]{Ward}), which, by chopping off the points and reassembling them, can be thought of as an $n$-gon with four $\frac{n}{4}$-gons attached to its edges (cf. \cite[Figure 1]{Hooper}). (Each $\frac{n}{4}$-gon is attached to every fourth edge of the $n$-gon.) We easily identify the two horizontal cylinders shown in Figures 1 and 2. (In the images, only the relevant parts of the $n$-gon and attaching $\frac{n}{4}$-gons are shown. The black dots and dashes are intended to make edge identifications clear. The blue and red dashes indicate the center lines of the corresponding cylinders.)

\begin{figure}
\centering
\includegraphics[scale=.4]{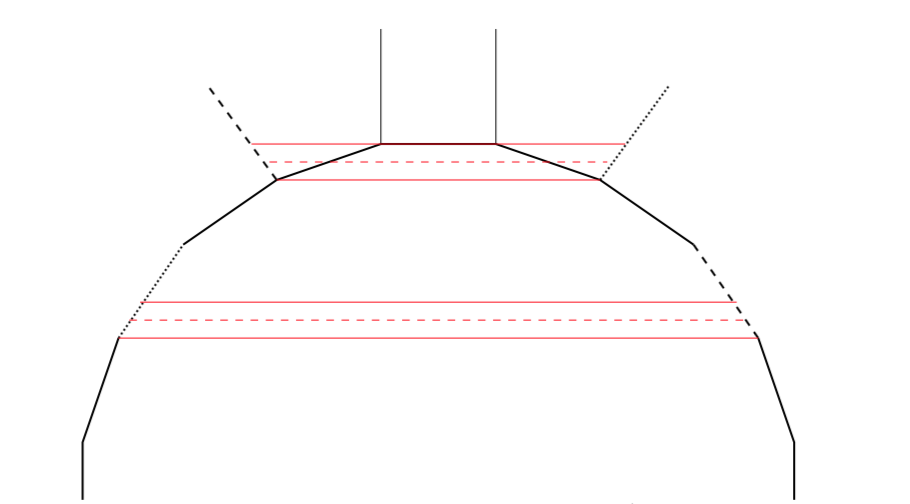}
\caption{``Top" cylinder}
\end{figure}

\begin{figure}
\centering
\includegraphics[scale=.45]{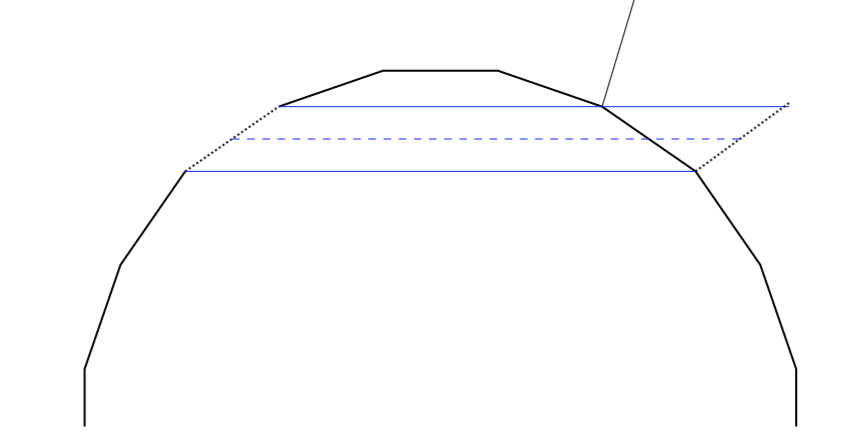}
\caption{``Bottom" cylinder}
\end{figure}

Letting $\alpha := \frac{(n-2)\pi}{n}$ be the interior angle of a regular $n$-gon of side length 1, one can calculate that the ``top" cylinder has height and circumference
$$h_t = \sin(\alpha), \,\,\, c_t = 2 - 4\cos(\alpha) + 2\cos(2\alpha) - 2 \cos(3\alpha)$$
and the ``bottom" cylinder has height and circumference
$$h_b = -\sin(2\alpha), \,\,\, c_b = 1 - 2\cos(\alpha) + 2\cos(2\alpha)$$
Comparing the moduli,
$$\frac{h_b}{c_b}/\frac{h_t}{c_t} = 4\cos^2(\alpha)$$
We note that $4\cos^2(\alpha) \in \Q$ would imply $\cos(2\alpha) \in \Q$, and as $2\alpha$ is a rational angle, it is a well-known fact that this would imply $2\alpha$ is a multiple of $\frac{\pi}{3}$ or $\frac{\pi}{2}$, i.e., $\alpha$ must certainly be a multiple of $\frac{\pi}{12}$. For the first triangle in this family, Hooper's triangle, we have $n = 12$, and $4\cos^2(\alpha) = 3$, as computed in \cite[Equation 8]{Hooper}. However, for all larger $n = 12 + 8x$ ($x \ge 1$), $\alpha = \frac{(n-2)\pi}{n}$ is not a multiple of $\frac{\pi}{12}$, and so the ratio of the moduli is irrational. By \cite[Remark on p. 582]{Veech}, this implies that the triangles of this family with $n > 12$ do not have the lattice property, and as explained at the beginning of this section, this argument completes the classification of rational lattice triangles with obtuse angle $\ge \frac{3\pi}{4}$.

\end{document}